\documentclass[11pt,reqno]{amsart}
\usepackage{microtype}
\usepackage{xcolor}
\usepackage[shortlabels]{enumitem}
\usepackage{amssymb}
\setlist{nolistsep}

\newtheorem{thm}{Theorem}[section]

\newtheorem{cor}[thm]{Corollary}
\newtheorem{prop}[thm]{Proposition}
\theoremstyle{definition}

\numberwithin{equation}{section}

\title[(Strong) Numerical radius orthograph]{Orthograph related to mutual (strong) numerical radius orthogonality on operator algebra}
\author{Shaoxing Sun, Jinchuan Hou, Xiaofei Qi}
\address
[Sun]{
  Fundamentals Department, Air Force Engineering University, Xi'an, 710051, P. R. China}

\address[Hou]{
  College of Mathematics, Taiyuan University of Technology, Taiyuan 030024,
P.R. China}

\address[Qi]{
	School of Mathematics and Statistics, Shanxi University, Taiyuan 030006,
	P. R. China; Key Laboratory of Complex Systems and Data Science of Ministry of Education,
	Shanxi University, Taiyuan  030006,  Shanxi, China}

\email{ssx15603406512@163.com;
  xiaofeiqisxu@aliyun.com;
  jinchuanhou@aliyun.com}

\thanks{{\it 2010 Mathematics Subject Classification.} Primary
 47A12, 47B47}
\thanks{{\it Key words and phrases.} Orthograph, orthogonality,
numerical radius orthogonality, strong numerical radius orthogonality}
\thanks{This work is partially supported by National Natural Science Foundation of
  China (12571138, 12071336) and the Natural Science Basic Research Program of Shaanxi Province  (S2025-JC-QN-1854).}

\begin{document}

\begin{abstract}
  Let $\mathcal{B} (H)$ and $\mathcal{B} _s(H)$ be the algebra of all bounded linear operators on a complex Hilbert space $H$ and the Jordan algebra of all self-adjoint operators in $\mathcal{B} (H)$, respectively. In this paper,  the orthographs  by (strong) numerical radius orthogonality   relation on $\mathcal{B} (H)$ or $\mathcal{B} _s(H)$ is  introduced, and then the graph-theoretic properties of these orthographs, such as isolated vertices, connected components and the  diameters, are discussed.
\end{abstract}

\maketitle

\section{Introduction}


Assume that $\mathcal{X}$ is any normed  space over the real or complex  field $\mathbb{F} (=\mathbb R$ or $\mathbb C$) with some orthogonality $\perp$. We say that  $A,B\in\mathcal{X} $ are mutual orthogonal to each other, denoted by $A\perp\!\!\!\!\perp B$, if $A\perp B$ and $B\perp A$. For an orthograph $G=\varGamma (\mathcal{X} ,\perp\!\!\!\!\perp)$,  the set of its  vertices is  $V(G)=\{[A]=\mathbb{F} A : A\in \mathcal{X} {\rm \ and \ }\|A \|=1 \}$; and  the set of its edges is $E(G)=\{AB:A,B\in\mathcal{X}  {\rm \ and \ }A\perp\!\!\!\!\perp B\}$.
For the convenience of expression,  write $A$ instead of $[A]$.  Two vertices $A,B\in V(G)$ are said to be adjacent if $AB\in E(G)$, and $A\in V(G)$ is called an isolated point if it is not adjacent to any other vertices in $V(G)$. A vertex sequence $A_1A_2\cdots A_n$ is called a path if the vertices $\{A_i\}$ are distinct and $A_iA_{i+1}\in E(G)$ with $i=1,2,\cdots n-1$; and in this case, the number of edges is called the length of the path. The graph $G$  is said to be connected if there exists a path from any vertex to any other vertex of $G$. A connected component of $G$ is a maximal connected subgraph. The distance between two distint vertices is the length of the shortest path between them. The diameter of  $G$ is the maximum of distances between vertices for all pairs of vertices in $G$, denoted by ${\rm Diam}\,G$. For more details, see \cite{BL}.

In \cite{B}, Birkhoff James orthogonality was introduced, where $x\in\mathcal{X}$ is said to be Birkhoff James orthogonal to $y\in\mathcal{X}$, denoted by $x\bot _By$, if $\|x+\lambda y \| \geq \|x \| $ holds for all $\lambda \in\mathbb{F}$. So $x\perp\!\!\!\!\perp_By$ means that $x\bot _By$ and $y\bot _Bx$. In \cite{AG}, the authors proved that the orthograph  $ \varGamma (\mathcal{X} ,\perp\!\!\!\!\perp_B)$  is connected when $\dim \mathcal{X}\geq 3$, and that Diam$\varGamma (\mathcal{X},\perp\!\!\!\!\perp_B)=2$ when  $\dim\mathcal{X}\geq 5$. Later, strong Birkhoff James orthogonality   was   introduced  for C$^*$-algebras in \cite{AR}, where $A\in\mathcal{A}$ (any C$^*$-algebra) is said to be strong Birkhoff James orthogonal to $B\in\mathcal{A}$, denoted by $A\bot ^s_BB$, if $\|A+ BC \| \geq \|A \| $ holds for all $C \in\mathcal{A}$. In \cite{AGKR},  the authors discussed  the connected components and the diameter of  the orthograph $\varGamma (\mathcal{A},\perp\!\!\!\!\perp^s_B)$  when $\mathcal A$ is commutative  or $\mathcal A=\mathcal{B} (H)$ (the algebra of all bounded linear operators on a complex Hilbert space $H$), and they  proved that   $A\in\mathcal{B} (H)$ is an isolated vertex of $\varGamma (\mathcal{B} (H),\perp\!\!\!\!\perp^s_B)$ if and only if $A$ is right invertible. In \cite{KS}, the authors proved that, for any C$^*$-algebra $\mathcal{A}$,  $A\in\mathcal{A}$ is an isolated vertex of $\varGamma (\mathcal{A},\perp\!\!\!\!\perp^s_B)$  if and only if $A$ is approximately right invertible; and   except $\mathbb{C} $, $\mathbb{C} \oplus \mathbb{C} $ and $\mathcal M_2(\mathbb{C} )$ (the algebra of all $2\times 2$ complex matrices),  all non isolated points make a single connected component of  $\varGamma (\mathcal{A},\perp\!\!\!\!\perp^s_B)$ with  the diameter of this component  not exceed $4$. In \cite{ S}, the orthograph $\varGamma (\mathcal{A},\perp\!\!\!\!\perp^s_B)$  with $\mathcal A$  being  any finite-dimensional C$^*$-algebra is discussed, and showed that the  diameter of $\varGamma (\mathcal{A},\perp\!\!\!\!\perp^s_B)$  can be used to identify different types of finite-dimensional C$^*$-algebras,  that is, ${\rm Diam}\,\varGamma(\mathbb{C} \oplus \mathbb{C} , \perp\!\!\!\!\perp^s_B)=1$; ${\rm Diam}\,\varGamma(\mathbb{C} \oplus M_2(\mathbb{C} ), \perp\!\!\!\!\perp^s_B)=4$ and ${\rm Diam}\,\varGamma(\mathcal{A} , \perp\!\!\!\!\perp^s_B)=3$ for all other cases.

The numerical radius, which is equivalent to the operator norm,  is an important tool for studying operators and matrices, and has been extensively studied.
Recall that the numerical range and the numerical radius of any operator $A\in\mathcal{B} (H)$ are defined respectively  $W(A)=\{\langle Ax,x\rangle :x\in H,\| x\| =1\}$ and $w(A)=\sup \{|\lambda |:\lambda \in W(A)\}$. The authors in \cite{MPS} replaced the operator norm in B-J orthogonality with the numerical radius and introduced the concept of numerical radius orthogonality.  Recall that $A\in\mathcal{B} (H)$ is said to be numerical radius orthogonal to $B\in\mathcal{B} (H)$, denoted by $A\bot _wB$, if $w(A+\lambda B)\geq w(A)$ holds for all $\lambda \in\mathbb{C} $. Note that $\bot _B$ and $\bot _w$ are not comparable. For the study of numerical radius orthogonality, see  \cite{AK,GMPS,RS} and the references therein.
Recently, the authors in  \cite{SQH} introduced  strong numerical radius orthogonality  on $\mathcal{B} (H)$, where $A\in\mathcal{B} (H)$ is  strong numerical radius orthogonal to $B\in\mathcal{B} (H)$, denoted by $A\bot^s_w B$, if $w(A+BC)\geq w(A)$ holds for all $C\in\mathcal{B} (H)$. It is easy to verify that $A\bot^s_w B$ if and only if $ A\bot_w BC$ holds for all $C\in\mathcal{B} (H)$. The numerical radius orthographs   $\varGamma (\mathcal{B} (H), \perp\!\!\!\!\perp_w)$ and $\varGamma (\mathcal{B}_s (H), \perp\!\!\!\!\perp_w)$ were discussed in \cite{AG}. It was shown
(ref. \cite[Theorem 4.1]{AG}) that both graphs are connected, that is, they have no isolated vertices, for $\dim H\geq 2$. It was also shown that  Diam $\varGamma (\mathcal{B} (H), \perp\!\!\!\!\perp_w)={\rm Diam}\ \varGamma (\mathcal{B}_s (H), \perp\!\!\!\!\perp_w)=2$ when $\dim H\geq 3$. Then an interesting question is to ask what are the diameters of these two graphs  if $\dim H=2$?


The purpose of the present  paper is to discuss the numerical radius orthograph $\varGamma (\mathcal{Y},\perp\!\!\!\!\perp_w)$ and strong numerical radius orthograph $\varGamma (\mathcal{Y},\perp\!\!\!\!\perp_w^s)$, where $\mathcal{Y}$ represents $\mathcal{B} (H)$ or $\mathcal{B} _s(H)$ (the Jordan algebra of all self-adjoint operators in $\mathcal{B} (H)$). The rest of the paper is organized as follows.  Section 2 provides some  properties  about the (strong) numerical radius orthogonality of bounded linear operators.  Section 3 is devoted to answering the question above. We show that   ${\rm Diam}\,\varGamma (\mathcal{B} (H),\perp\!\!\!\!\perp_w)=3$ and ${\rm Diam}\,\varGamma (\mathcal{B}_s (H),\perp\!\!\!\!\perp_w)=2$ for the case $\dim H=2$ (Theorem 3.1).  For a subset $\mathcal S$ of a C$^*$-algebra $\mathcal A$, denote by NI$(\mathcal S)$ the set of all element in $\mathcal S$ that is not invertible in $\mathcal A$. In Section 4,  we consider the case of strong numerical radius orthogonality and characterize the connected components of $\varGamma (\mathcal M_n(\mathbb C),\perp\!\!\!\!\perp^s_w)$ (Theorems 4.2 and   4.5) and show that Diam\,$\varGamma ({\rm NI}(\mathcal M_n(\mathbb C)),\perp\!\!\!\!\perp^s_w)\leq 4$ for $n\geq 3$ (Theorem 4.3) and Diam$\varGamma ({\rm NI}(\mathcal M_2(\mathbb C)),\perp\!\!\!\!\perp^s_w)=4$ (Theorem 4.8).
In Section 5, we discuss the orthograph  $\varGamma (\mathcal{B}_s (H),\perp\!\!\!\!\perp^s_w)$. We characterize the connected components of    $\varGamma (\mathcal{B}_s (H),\perp\!\!\!\!\perp^s_w)$ for $\dim H\geq 3$ (Theorem 5.3) and
show that Diam\,$\varGamma ({\rm NI}(\mathcal{B}_s (H)),\perp\!\!\!\!\perp^s_w)=3$ for $\dim H\geq 4$ and Diam\,$\varGamma ({\rm NI}(\mathcal{B}_s (H)),\perp\!\!\!\!\perp^s_w)=4$ for $\dim H=3$ (Theorem 5.2). The  structure of connected components of $\varGamma (\mathcal{B}_s (H),\perp\!\!\!\!\perp^s_w)$ when $\dim H=2$ is very different which is described in  Theorem \ref{prop5.6}.

We close the section by fixing some other notions. In this paper we always assume that $H$ is any complex Hilbert space with $\dim H\geq 2$. \if false Denote by $\mathcal{B} (H)$ the algebra of all bounded linear operators on $H$ and  $\mathcal{B} _s(H)$ the Jordan algebra of all self-adjoint operators in $\mathcal{B} (H)$.\fi For $A\in\mathcal{B} (H)$, $\ker A$ and ${\rm ran}\,A$ respectively stand for the kernel and the  range of $A$. For any subspace $H'$ of $H$, $P_{[H']}$ denotes the projection operator onto $H'$. Write  ${\rm sphere}\, H=\{x\in H:\| x\| = 1 \}$.    For any set  $M$, ${\rm co}\,(M)$ denotes its convex hull.  If $H$ is finite dimensional with $\dim H=n$, then $\mathcal B(H)$ can be regarded as the algebra of $n\times n$ complex matrices  $\mathcal{M} _n(\mathbb{C} )$, and  in this case, let $\mathsf{H} _n$ denote the Jordan algebra of all self-adjoint matrices in $\mathcal{M} _n(\mathbb{C} )$.  For any $\alpha \in\mathbb{C} $, $\arg \alpha $ denotes its argument.

\section {Properties of (strong) numerical radius orthogonality}

 In this section, we list some  properties of   (strong) numerical radius orthogonality that are useful for our purpose.

The following proposition gives a characterization of numerical radius orthogonality, which  plays a key role in this paper.

\begin{prop}\label{1.1} {\rm (\cite{MPS})}
	Assume that
 $T,A\in\mathcal{B} (H)$. The following statements are equivalent.

 {\rm (1)} $T\perp _wA$.

 {\rm (2)} $\alpha T\perp _w\beta A$ for all $\alpha ,\beta \in\mathbb{C} \setminus \{0\}$.

 {\rm (3)} For each $\theta \in[0,2\pi)$, there exists a sequence $\{x_n^\theta \}\subseteq {\rm sphere}\ H=\{x\in H:\| x\|=1 \}$ such that $\lim\limits _{n\rightarrow \infty }|\langle Tx_n^\theta ,x_n^\theta \rangle |=w(T)$ and $\lim\limits _{n\rightarrow \infty }{\rm Re}\{e^{-i\theta }\langle Tx_n^\theta ,x_n^\theta \rangle\overline{\langle Ax_n^\theta ,x_n^\theta \rangle}  \}\geq 0$.
\end{prop}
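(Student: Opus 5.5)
We prove (1)$\Leftrightarrow$(2) directly and (1)$\Leftrightarrow$(3) by a convexity/supporting-functional argument.

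\emph{(1)$\Leftrightarrow$(2).} Note that $w(\alpha T+\lambda\beta A)=|\alpha|\,w(T+(\lambda\beta/\alpha)A)$ and $w(\alpha T)=|\alpha|w(T)$. As $\lambda$ ranges over $\mathbb C$, so does $\lambda\beta/\alpha$. Hence the inequality $w(\alpha T+\lambda\beta A)\ge w(\alpha T)$ for all $\lambda$ is equivalent to $w(T+\mu A)\ge w(T)$ for all $\mu$. Taking $\alpha=\beta=1$ gives the converse direction.

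\emph{(3)$\Rightarrow$(1).} Write $\lambda=re^{i\theta}$ with $r\ge0$; the case $w(T)=0$ is trivial, so assume $w(T)>0$. Take the sequence $x_n=x_n^{\theta'}$ for the appropriate angle $\theta'$, namely the one for which $\mathrm{Re}\{\langle Tx,x\rangle\overline{\lambda\langle Ax,x\rangle}\}=r\,\mathrm{Re}\{e^{-i\theta'}\langle Tx,x\rangle\overline{\langle Ax,x\rangle}\}$; this is $\theta'=-\theta$, up to the sign convention. Then
\[
|\langle (T+\lambda A)x_n,x_n\rangle|^2 = |\langle Tx_n,x_n\rangle|^2+2r\,\mathrm{Re}\{e^{i\theta}\overline{\langle Tx_n,x_n\rangle}\langle Ax_n,x_n\rangle\}+r^2|\langle Ax_n,x_n\rangle|^2 .
\]
Since the middle term equals the real part of the conjugate, the condition in (3) applies to it. Passing to the limit, $\liminf$ of the right side is at least $w(T)^2$, so $w(T+\lambda A)\ge w(T)$.

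\emph{(1)$\Rightarrow$(3).} This is the main obstacle, and we would attack it by contradiction using compactness of numerical values. Fix $\theta$ and suppose no such sequence exists. Then there are $\varepsilon,\delta>0$ such that every unit $x$ with $|\langle Tx,x\rangle|>w(T)-\delta$ satisfies $\mathrm{Re}\{e^{-i\theta}\langle Tx,x\rangle\overline{\langle Ax,x\rangle}\}<-\varepsilon$. (If no such pair existed, choosing $\delta=\varepsilon=1/n$ would produce the desired sequence, after passing to a subsequence along which the bounded real parts converge to a limit $\ge0$.) Now take $\lambda=te^{-i\theta}$, with the sign arranged as above, and $t>0$ small. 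There are two cases for a unit vector $x$.
\begin{itemize}
\item If $|\langle Tx,x\rangle|>w(T)-\delta$, the expansion above gives $|\langle (T+\lambda A)x,x\rangle|^2\le w(T)^2-2t\varepsilon+t^2\|A\|^2$.
\item Otherwise, $|\langle (T+\lambda A)x,x\rangle|\le w(T)-\delta+t\|A\|$.
\end{itemize}
For $t$ small enough, both bounds are uniformly below $w(T)$. Hence $w(T+\lambda A)<w(T)$, contradicting (1). The care needed is only in matching the phase convention $e^{-i\theta}$ with the choice of $\lambda$, and in the uniformity in $x$, which the $\varepsilon,\delta$ formulation provides.
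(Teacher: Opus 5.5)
Your proof is correct. The paper itself gives no proof of this proposition (it is quoted from [MPS]), so there is no in-paper argument to compare with; what you have written is a complete, self-contained proof.

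Your argument runs as follows. Expand $|\langle (T+\lambda A)x,x\rangle|^2$. For (3)$\Rightarrow$(1), feed in the approximate maximizers from (3). For (1)$\Rightarrow$(1)'s converse direction, (1)$\Rightarrow$(3), you turn the failure of (3) at some angle into a uniform $(\varepsilon,\delta)$ statement, which a small perturbation $\lambda$ then contradicts. That uniformity is exactly what makes the argument work when $w(T)$ is not attained, for instance when $\dim H=\infty$.

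The one thing to correct is the phase bookkeeping you hedged on. With $\lambda=re^{i\theta}$ one has $\langle Tx,x\rangle\overline{\lambda\langle Ax,x\rangle}=re^{-i\theta}\langle Tx,x\rangle\overline{\langle Ax,x\rangle}$. So in (3)$\Rightarrow$(1) the sequence to use is $x_n^{\theta}$ itself, not $x_n^{-\theta}$; your own displayed expansion already shows this, since ${\rm Re}\{e^{i\theta}\overline{a}b\}={\rm Re}\{e^{-i\theta}a\overline{b}\}$. Likewise, in (1)$\Rightarrow$(3) you should take $\lambda=te^{i\theta}$ rather than $te^{-i\theta}$.

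Finally, the degenerate case $A=0$, where you divide by $\|A\|$ when choosing $t$, needs a separate line. It is trivial: there (3) holds for any maximizing sequence, so no contradiction argument is needed.
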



The following several propositions give some sufficient (or necessary) conditions for the strong numerical radius orthogonality.

\begin{prop}\label{prop2.0}{\rm (\cite[Proposition 3.2]{SQH})} Assume that $A,B\in\mathcal{B} (H)$. If there exists a sequence  $\{x_n\}\subset  {\rm sphere}\,H$ such that $\lim\limits _{n\rightarrow \infty }|\langle Ax_n,x_n\rangle |=w(A)$ and $\lim\limits _{n\rightarrow \infty }\| B^*x_n\|=0$, then $A\bot_w^s B$.
  \end{prop}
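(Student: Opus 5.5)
We argue directly from the definition: $A\bot^s_w B$ means $w(A+BC)\geq w(A)$ for every $C\in\mathcal{B}(H)$. Fix an arbitrary $C\in\mathcal{B}(H)$. The idea is to test the numerical range of $A+BC$ on the sequence $\{x_n\}$ from the hypothesis, and to show that along this sequence the perturbation $BC$ has vanishing contribution.

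The key computation is
\[
\langle (A+BC)x_n,x_n\rangle=\langle Ax_n,x_n\rangle+\langle Cx_n,B^*x_n\rangle .
\]
The second term is controlled by the Cauchy--Schwarz inequality:
\[
|\langle Cx_n,B^*x_n\rangle|\leq \|C\|\,\|x_n\|\,\|B^*x_n\| = \|C\|\,\|B^*x_n\| \to 0 ,
\]
since $\|x_n\|=1$ and $\|B^*x_n\|\to 0$ by hypothesis.

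It follows that
\[
\lim_{n\to\infty}|\langle (A+BC)x_n,x_n\rangle| = \lim_{n\to\infty}|\langle Ax_n,x_n\rangle| = w(A).
\]
Each number $\langle (A+BC)x_n,x_n\rangle$ lies in $W(A+BC)$, so
\[
w(A+BC)\geq |\langle (A+BC)x_n,x_n\rangle| \quad \text{for every } n .
\]
Letting $n\to\infty$ gives $w(A+BC)\geq w(A)$. Since $C$ was arbitrary, $A\bot^s_w B$.

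No real obstacle is expected. The only point needing care is that the adjoint $B^*$, rather than $B$, is the operator that must be small on $x_n$. This is exactly what moving $B$ across the inner product produces: $\langle BCx_n,x_n\rangle=\langle Cx_n,B^*x_n\rangle$. Consequently the estimate is uniform in the choice of $C$ up to the factor $\|C\|$, and no appeal to Proposition~\ref{1.1} is needed.
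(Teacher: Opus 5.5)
Your argument is correct and complete. The paper gives no proof of this statement to compare against: it imports it from \cite[Proposition 3.2]{SQH}.

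Your route checks the definition directly. You use the identity $\langle (A+BC)x_n,x_n\rangle=\langle Ax_n,x_n\rangle+\langle Cx_n,B^*x_n\rangle$ and bound the second term by $\|C\|\,\|B^*x_n\|\to 0$. The limit of $|\langle (A+BC)x_n,x_n\rangle|$ exists and equals $w(A)$, because $\bigl||a+b|-|a|\bigr|\le |b|$. Since each $\langle (A+BC)x_n,x_n\rangle$ lies in $W(A+BC)$, this gives $w(A+BC)\ge w(A)$.

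A natural alternative, in the spirit of how the paper uses Proposition~\ref{1.1}, goes through the characterization in \cite{MPS}. One verifies condition (3) of Proposition~\ref{1.1} for the pair $(A,BC)$, using the same sequence $\{x_n\}$ for every $\theta$. This works because $\langle BCx_n,x_n\rangle\to 0$ forces ${\rm Re}\{e^{-i\theta}\langle Ax_n,x_n\rangle\overline{\langle BCx_n,x_n\rangle}\}\to 0$. That yields $A\perp_w BC$ for all $C$, which is equivalent to $A\perp^s_w B$.

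That route depends on the nontrivial characterization of numerical radius orthogonality. Yours needs only Cauchy--Schwarz and the definition of $w$, so it is the more elementary and self-contained proof.
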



  \begin{prop}\label{prop2.1}{\rm (\cite[Proposition 3.3]{SQH})}
  	 Assume that $A,B\in\mathcal{B}(H)$ with $0\not=A$   normaloid (that is, $\| A\| =w(A)$). Then the following statements are equivalent.
  	
   {\rm 	(i)}  $A\bot ^s_w B$.
  	
   {\rm 	(ii)}  There exist a sequence $\{x_n\}\subset {\rm sphere}\,H$ such that $\lim\limits_{n \to \infty}|\langle Ax_n,x_n\rangle |=w(A) $ and $\lim  	\limits_{n \to \infty}\| B^*x_n\| =0$.

\end{prop}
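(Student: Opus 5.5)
(ii)$\Rightarrow$(i) is exactly Proposition \ref{prop2.0}, so only (i)$\Rightarrow$(ii) needs work. The plan is to argue by contradiction: assume no such sequence exists, and build a $C\in\mathcal B(H)$ with $w(A+BC)<w(A)$.

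Normalize so that $w(A)=\|A\|=1$. Define the set of approximate maximizers
\[
M_\varepsilon=\{x\in{\rm sphere}\,H:\ |\langle Ax,x\rangle|>1-\varepsilon\}.
\]
If (ii) fails, there exist $\varepsilon>0$ and $\delta>0$ with $\|B^*x\|\ge\delta$ for every $x\in M_\varepsilon$. Otherwise we could pick $x_n\in M_{1/n}$ with $\|B^*x_n\|<1/n$, which is precisely (ii).

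Next, use normaloidity to get structure. For a unit vector $x$, $|\langle Ax,x\rangle|\le\|Ax\|\le1$. So $|\langle Ax,x\rangle|$ close to $1$ forces $\|Ax\|$ close to $1$ and $Ax\approx\lambda x$ with $|\lambda|\approx1$. In particular $A^*x\approx\bar\lambda x$ as well, because $\|A^*\|=1$ and $\langle A^*x,x\rangle=\bar\lambda$. Now take $C=-tB^*A$ with small $t>0$. Then
\[
\langle (A+BC)x,x\rangle=\langle Ax,x\rangle-t\langle B^*Ax,B^*x\rangle .
\]
For $x\in M_\varepsilon$ with $\varepsilon$ small, $Ax\approx\lambda x$, so the correction term is approximately $t\lambda\|B^*x\|^2$, which has the same phase as $\langle Ax,x\rangle\approx\lambda$. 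This gives
\[
|\langle (A+BC)x,x\rangle|\le 1-t\delta^2+O(t\sqrt{\varepsilon})+O(t^2).
\]
Shrinking $\varepsilon$ if necessary, this keeps the quantity below $1-t\delta^2/2$ on $M_\varepsilon$.

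Off $M_\varepsilon$ we have $|\langle Ax,x\rangle|\le1-\varepsilon$, and the perturbation has size at most $t\|B\|^2$. Choosing $t<\varepsilon/(2\|B\|^2)$ therefore keeps these values at most $1-\varepsilon/2$. Hence $w(A+BC)<1=w(A)$, contradicting $A\perp^s_w B$.

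\textbf{Main obstacle.} The hardest step is the quantitative approximation $Ax\approx\lambda x$ from $|\langle Ax,x\rangle|>1-\varepsilon$ when $\|A\|=1$. Expanding $\|Ax-\lambda x\|^2=\|Ax\|^2-|\lambda|^2\le 1-(1-\varepsilon)^2\le 2\varepsilon$, with $\lambda=\langle Ax,x\rangle$, gives an error of order $\sqrt\varepsilon$. Using this bound, the error in the correction term is $O(t\sqrt\varepsilon\,\|B\|^2)$, and it must be balanced against the gain $t\delta^2$. The subtlety is that $\delta$ depends on $\varepsilon$. This is handled by first fixing $\varepsilon_0$ and $\delta$ from the failure of (ii), then noting that $\|B^*x\|\ge\delta$ continues to hold on $M_\varepsilon\subseteq M_{\varepsilon_0}$ for every smaller $\varepsilon$. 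We then choose $\varepsilon$ small relative to $\delta^2/\|B\|^2$, and finally choose $t$.
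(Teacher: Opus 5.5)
Your proof is correct. The reduction of the failure of (ii) to a uniform bound $\|B^*x\|\ge\delta$ on some $M_{\varepsilon_0}$ is valid. Normaloidity enters exactly through $\|Ax-\lambda x\|^2=\|Ax\|^2-|\lambda|^2\le 1-|\lambda|^2$ with $\lambda=\langle Ax,x\rangle$. The two-region estimate for $C=-tB^*A$ then does give $w(A+BC)<w(A)$.

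Two minor points. First, $\langle (A+BC)x,x\rangle$ is affine in $t$, so there is no $O(t^2)$ term. You do implicitly need $t\|B\|^2\le 1$ to get $|\lambda|(1-t\|B^*x\|^2)\le 1-t\delta^2$, and your choice of $t$ ensures this. Second, $B\neq 0$ holds automatically once (ii) fails.

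The paper gives no proof here; it quotes \cite[Proposition 3.3]{SQH}. Within the paper's toolkit, the short route to (i)$\Rightarrow$(ii) goes through Proposition \ref{1.1}:
\begin{itemize}
\item Strong orthogonality gives in particular $A\perp_w BB^*A$.
\item Condition (3) with $\theta=\pi$ then produces unit vectors $x_n$ with $|\lambda_n|\to w(A)$, where $\lambda_n=\langle Ax_n,x_n\rangle$, and $\lim{\rm Re}\{\lambda_n\overline{\langle B^*Ax_n,B^*x_n\rangle}\}\le 0$.
\item By normaloidity $Ax_n-\lambda_nx_n\to 0$, so the quantity inside equals $|\lambda_n|^2\|B^*x_n\|^2+o(1)$. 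Since $w(A)>0$, this forces $\|B^*x_n\|\to 0$.
\end{itemize}

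You use the same test operator $B^*A$ and the same normaloid estimate. The difference is that you do not call on the characterization of $\perp_w$; instead you reprove by hand the part of Proposition \ref{1.1} needed for this one perturbation direction. Your version is self-contained and elementary, at the price of the $\varepsilon$--$\delta$--$t$ bookkeeping and an argument by contradiction. The route through Proposition \ref{1.1} takes a few lines once that characterization is granted, and it directly produces the sequence required in (ii).
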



 Proposition \ref{prop2.1} does not hold if the assumption that $A$ is normaloid is removed.

\begin{prop}\label{prop2.01}{\rm (\cite[Proposition 3.6]{SQH})}
  	 Assume that $T,A\in\mathcal{M} _2(\mathbb{C} ) $ with $T\not=0$.

{\rm (i)} If $T^2=0$, that is, $T=\lambda e_1\otimes e_2$  for some $\lambda \in\mathbb{C} $ and $e_1,e_2\in{\rm sphere}\,\mathbb{C} ^2$ with $\langle e_1,e_2\rangle=0 $, and if $A=u\otimes v$ is of rank one with $u,v\in\mathbb{C} ^2$ such that  $|\langle u,e_1\rangle |<|\langle u,e_2\rangle |$, then  $T\bot ^s_wA$, but for any $x\in M_T$, $A^*x\not=0$.

{\rm (ii)} If $T^2=0$, that is, $T=\lambda e_1\otimes e_2$  for some $\lambda \in\mathbb{C} $ and   $e_1,e_2\in{\rm sphere}\,\mathbb{C} ^2$ with $\langle e_1,e_2\rangle=0 $, and if $A=u\otimes v$ is of rank one with $u,v\in\mathbb{C} ^2$ such that  $|\langle u,e_1\rangle |\geq |\langle u,e_2\rangle |$, then $T\bot _w^sA$ if and only if there exists some  $x\in M_T$ such that $A^*x=0$.

{\rm (iii)} If $T^2\not=0$, then $T\bot _w^sA$ if and only if there exists some  $x\in M_T$ such that $A^*x=0$.
Here, $M_T=\{x\in{\rm sphere}\,\mathbb{C} ^2:|\langle Tx,x\rangle |=w(T)
\}$.
\end{prop}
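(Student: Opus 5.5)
The plan is to reduce strong orthogonality to ordinary numerical radius orthogonality against a family of rank-one operators, and then apply the criterion of Proposition \ref{1.1}(3) in the concrete $2\times 2$ setting. Since $T\bot^s_w A$ holds iff $T\bot_w AC$ for all $C\in\mathcal M_2(\mathbb C)$, and $A=u\otimes v$ gives $AC=u\otimes (C^*v)$ with $C^*v$ arbitrary, we get
\[ T\bot^s_w A \iff T\bot_w u\otimes z \ \text{ for all } z\in\mathbb C^2 .\]
Here $\langle (u\otimes z)x,x\rangle=\langle x,z\rangle\langle u,x\rangle$. By compactness of the sphere, the sequences in Proposition \ref{1.1}(3) may be replaced by points of $M_T$. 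So $T\bot^s_w A$ iff for every $z$ and every $\theta$ there is $x\in M_T$ with
\[ \mathrm{Re}\{e^{-i\theta}\langle Tx,x\rangle\,\overline{\langle x,z\rangle\langle u,x\rangle}\}\ge 0 .\]
The sufficiency of ``$A^*x=0$ for some $x\in M_T$'' in (ii) and (iii) is immediate from Proposition \ref{prop2.0}, because $A^*x=\langle x,u\rangle v$. The work is therefore in the necessity parts and in (i).

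For (iii), assume $T^2\ne0$. Then $W(T)$ is an elliptical disc, a segment, or a point, and it is not a disc centred at $0$; that case occurs exactly when both eigenvalues vanish, i.e.\ $T^2=0$.
\begin{itemize}
\item If $T$ is scalar, $M_T$ is the whole sphere, so some $x\in M_T$ is orthogonal to $u$, and the claim holds trivially.
\item Otherwise the maximum modulus is attained at one or two boundary points of $W(T)$. Each such point has a preimage that is a single unit vector up to a phase: for a nondegenerate ellipse this is a known fact for $2\times2$ matrices, and for normal $T$ the extremal points are eigenvalues with eigenvectors.
\end{itemize}
Thus $M_T=\{e^{it}x_1\}\cup\{e^{it}x_2\}$ with at most two classes, and the test quantity is phase invariant. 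Suppose $\langle x_j,u\rangle\ne0$ for all $j$. Take $\theta=0$ and choose $z$ solving the (at most two) linear conditions
\[ \langle x_j,z\rangle\langle u,x_j\rangle=-\langle Tx_j,x_j\rangle .\]
This system is solvable when $x_1,x_2$ are independent, and consistent when there is only one class. With this $z$, every $x\in M_T$ gives a negative real part, contradicting orthogonality. Hence some $x\in M_T$ has $A^*x=0$.

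For (i) and (ii), take $T=\lambda e_1\otimes e_2$, where we may assume $\lambda=1$. Then $W(T)$ is the disc of radius $1/2$ centred at $0$, and up to phase
\[ M_T=\{x_\varphi=(e_1+e^{i\varphi}e_2)/\sqrt2:\ \varphi\in[0,2\pi)\} .\]
Write $u=a e_1+b e_2$ and $z=c e_1+d e_2$. Then $\langle Tx_\varphi,x_\varphi\rangle=e^{-i\varphi}/2$ (up to the convention for $\otimes$), and $\langle x_\varphi,z\rangle\langle u,x_\varphi\rangle=\tfrac12(\bar c+e^{i\varphi}\bar d)(a+e^{-i\varphi}b)$. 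The test quantity becomes a trigonometric polynomial in $\varphi$, namely
\[ \mathrm{Re}\{e^{-i\theta}(\bar a e^{-i\varphi}+\bar b)(c+d e^{-i\varphi})\}/4 .\]
The question is whether, for every $(c,d,\theta)$, this is $\ge0$ for some $\varphi$.
\begin{itemize}
\item When $|a|<|b|$, the factor $\bar a e^{-i\varphi}+\bar b$ winds zero times around $0$, so it has a continuous argument of small oscillation. I will show that the product with $c+de^{-i\varphi}$ always sweeps arguments covering a closed half-plane direction. The argument: if $|d|\ge|c|$ the second factor winds once and the product attains every argument; if $|d|<|c|$, evaluate the average over $\varphi$, which equals $\bar b c+\bar a d\cdot(\text{cross term})$, and show it is nonzero or that the function is not of constant sign. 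The mean-value argument (the average over $\varphi$ of the real part being zero forces a nonnegative value) handles most of this. This gives $T\bot^s_wA$, while $A^*x_\varphi\propto a+e^{-i\varphi}b\ne0$ for all $\varphi$, which proves (i).
\item When $|a|\ge|b|$ and $a+e^{-i\varphi}b\ne0$ for all $\varphi$ (so $|a|>|b|$), the first factor is dominated by $\bar a e^{-i\varphi}$. Choose $d=0$ and $c$ such that the product $\approx c\,\bar a e^{-i\varphi}$. A more careful choice, $c=0$ and $d$ with $\bar a d$ appropriate, makes the $e^{-2i\varphi}$ term dominate, and I will pick $(c,d,\theta)$ so that the real part is negative for all $\varphi$. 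Finding this explicit choice is the step I expect to be the main obstacle, since domination alone may not keep the sign fixed.
\end{itemize}
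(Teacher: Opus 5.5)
\textbf{The gap is in (i) and (ii), and it starts with a sign slip.} You use the convention $(u\otimes z)x=\langle x,z\rangle u$. Under it, $Tx_\varphi=\langle x_\varphi,e_2\rangle e_1$, so $\langle Tx_\varphi,x_\varphi\rangle=e^{i\varphi}/2$, not $e^{-i\varphi}/2$. The correct test quantity is $\frac14{\rm Re}\{e^{-i\theta}g(e^{i\varphi})\}$ with $g(w)=(d+cw)(\bar a+\bar b w)$. Your expression $(\bar a e^{-i\varphi}+\bar b)(c+de^{-i\varphi})$ interchanges $a$ and $b$, that is, the roles of $e_1$ and $e_2$. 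For that expression, both of your bullets aim at false claims.

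\emph{First bullet} ($|a|<|b|$). Put $d=0$, $c=1$, $e^{-i\theta}=-b/|b|$. Your quantity is then at most $(|a|-|b|)/4<0$ for every $\varphi$, so your subcase $|d|<|c|$ cannot be rescued. In any case, the mean-value argument only settles those $\theta$ with ${\rm Re}\{e^{-i\theta}\bar b c\}\ge 0$, never all $\theta$.

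\emph{Second bullet} ($|a|>|b|$). Your first factor vanishes at $-\bar b/\bar a$, which lies inside the unit disc. So your product hits or winds around $0$ for every $(c,d)\ne 0$, and no choice of $(c,d,\theta)$ can make it negative. That is exactly the obstacle you ran into.

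\textbf{How to close it.} This statement is quoted in the paper rather than proved there. With the correct $g$, the missing idea is the argument-principle step the paper uses in proving Proposition \ref{prop4.1}, namely the direction \emph{a zero in the open disc implies $0\in{\rm co}$} of Eq.~(\ref{2.011}).

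\emph{Part (i).} If $|a|<|b|$, the factor $\bar a+\bar b w$ vanishes at $-\bar a/\bar b$, and $|\bar a/\bar b|<1$. So for every $(c,d)\ne0$ the closed curve $\varphi\mapsto g(e^{i\varphi})$ either passes through $0$ or has nonzero winding number about $0$. Hence it lies in no half-plane $\{\zeta:{\rm Re}(e^{-i\theta}\zeta)<0\}$. Thus for every $\theta$ some $\varphi$ gives a nonnegative real part, so $T\bot_w u\otimes z$ for all $z$, i.e.\ $T\bot^s_wA$. Meanwhile $A^*x_\varphi=\frac1{\sqrt2}(\bar a+\bar b e^{i\varphi})v\ne0$ for every $\varphi$.

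\emph{Part (ii).} If $|a|>|b|$, take $c=0$, $d=1$ (so $z=e_2$) and $e^{-i\theta}=-a/|a|$. Then ${\rm Re}\{e^{-i\theta}g(e^{i\varphi})\}\le |b|-|a|<0$ for all $\varphi$, so $T\not\perp_w u\otimes e_2$ and hence $T\not\perp^s_w A$. If $|a|=|b|$, some $x_\varphi\in M_T$ already satisfies $A^*x_\varphi=0$. Use this degree-one choice rather than the converse \emph{no zero in the disc implies $0\notin{\rm co}$}. That converse fails for quadratics: for example, $(w-1.01)^2$ has $0$ in the convex hull of its values on the unit circle.

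\textbf{Part (iii).} Your argument here is sound. Outside the scalar case, $M_T$ has at most two phase classes, which are automatically linearly independent. Solving $\langle x_j,z\rangle\langle u,x_j\rangle=-\langle Tx_j,x_j\rangle$ makes the test quantity equal $-w(T)^2$ on all of $M_T$. However, (iii) is stated for arbitrary $A$. Before writing $A=u\otimes v$ you must dispose of two cases. If $A=0$, the statement is trivial. If $A$ is invertible, both sides fail, e.g.\ via Proposition \ref{prop2.3} or by taking $C=-A^{-1}T$.
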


Particularly, for square-zero matrices, Proposition \ref{prop2.01} can be reformulated as follows.

\begin{cor}\label{prop2.011}
  	 Assume that $0\neq B\in\mathcal{M} _2(\mathbb{C} ) $, $x,y\in {\rm sphere}\,\mathbb{C} ^2$ with $\langle x,y\rangle=0$. Then the following statements are equivalent.

{\rm (1)} $x\otimes y\perp^s_wB$.

{\rm (2)} $B^*x$ and $B^*y$ are linearly dependent and $\|B^*y \| \geq \|B^*x \| $.
\end{cor}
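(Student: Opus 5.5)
We identify $x\otimes y$ with the square-zero matrix $T=e_1\otimes e_2$ of Proposition \ref{prop2.01}, taking $e_1=x$, $e_2=y$, $\lambda=1$. Here $(u\otimes v)z=\langle z,v\rangle u$, so that $(u\otimes v)^*=v\otimes u$; this is the same convention used in Proposition \ref{prop2.01}. We then split according to the rank of $B\neq 0$, which in $\mathcal M_2(\mathbb C)$ is either $2$ or $1$.

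\emph{Step 1: identify $M_T$.} For $z\in{\rm sphere}\,\mathbb C^2$ we have $\langle Tz,z\rangle=\langle z,y\rangle\langle x,z\rangle$. Since $\{x,y\}$ is an orthonormal basis, $|\langle z,x\rangle|^2+|\langle z,y\rangle|^2=1$, so $|\langle Tz,z\rangle|\le 1/2$. Equality holds exactly when $|\langle z,x\rangle|=|\langle z,y\rangle|=1/\sqrt2$. Hence $w(T)=1/2$ and
\[
M_T=\{z\in{\rm sphere}\,\mathbb C^2 : |\langle z,x\rangle|=|\langle z,y\rangle|\}.
\]

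\emph{Step 2: $B$ invertible.} Then $B^*$ is injective, so $B^*x$ and $B^*y$ are linearly independent and (2) fails. Condition (1) also fails: take $C=-B^{-1}T$. Then $w(T+BC)=0<w(T)$, so $T\not\perp^s_w B$. Thus (1) and (2) are both false, and the equivalence holds in this case.

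\emph{Step 3: $B=u\otimes v$ of rank one.} Now $B^*z=\langle z,u\rangle v$. So $B^*x$ and $B^*y$ are always linearly dependent, and
\[
\|B^*y\|\ge\|B^*x\| \iff |\langle u,y\rangle|\ge|\langle u,x\rangle|.
\]
We compare this with the three possibilities in Proposition \ref{prop2.01}.
\begin{enumerate}[(a)]
\item If $|\langle u,x\rangle|<|\langle u,y\rangle|$, then Proposition \ref{prop2.01}(i) gives $T\perp^s_wB$, and (2) holds.
\item If $|\langle u,x\rangle|\ge|\langle u,y\rangle|$, then Proposition \ref{prop2.01}(ii) says $T\perp^s_wB$ if and only if some $z\in M_T$ satisfies $B^*z=0$, that is, $z\perp u$.
\end{enumerate}

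In case (b), write $u=\alpha x+\beta y$. Up to a unimodular scalar, the unit vector orthogonal to $u$ is
\[
z=\frac{-\bar\beta x+\bar\alpha y}{\|u\|}.
\]
Hence $|\langle z,x\rangle|=|\beta|/\|u\|$ and $|\langle z,y\rangle|=|\alpha|/\|u\|$. By Step 1, $z\in M_T$ if and only if $|\alpha|=|\beta|$, that is, $|\langle u,x\rangle|=|\langle u,y\rangle|$.

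So in the equality case $T\perp^s_wB$ holds, and (2) also holds. In the strict case $|\langle u,x\rangle|>|\langle u,y\rangle|$, $T\perp^s_wB$ fails, and (2) also fails since $\|B^*y\|<\|B^*x\|$. This completes the equivalence in all cases.

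The main point of care is Step 3(b). We must keep the tensor convention consistent with Proposition \ref{prop2.01}, so that $\|B^*z\|=|\langle z,u\rangle|\,\|v\|$ and the inequality direction matches. We must also check the two-dimensional fact that $u^\perp$ lies in $M_T$ precisely when $|\langle u,x\rangle|=|\langle u,y\rangle|$.
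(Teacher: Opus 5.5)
Your proposal is correct and follows the paper's route: the paper presents this corollary as a direct reformulation of Proposition \ref{prop2.01}(i)--(ii) for rank-one $B$, which is exactly what your Step 3 carries out, including the check that $[u]^\perp\cap M_T\neq\emptyset$ iff $|\langle u,x\rangle|=|\langle u,y\rangle|$. Your Step 2 also supplies the invertible case, which Proposition \ref{prop2.01} does not literally cover, via the one-line choice $C=-B^{-1}(x\otimes y)$ (equivalently, Proposition \ref{prop2.3}).
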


In fact,   Corollary \ref{prop2.011} also holds  for any  rank-one nilpotent matrices in $\mathcal{M} _n(\mathbb{C} )$.

\begin{prop}\label{prop4.1}
Let $n\geq 3$. Assume that $0\neq A\in\mathcal{M} _n(\mathbb{C} )$,  $x,y\in {\rm sphere}\,\mathbb{C} ^n$ with $\langle x,y\rangle =0$. Then the following statements are equivalent.

{\rm (1)} $x\otimes y\perp^s_wA$.

{\rm (2)} $A^*x$ and $A^*y$ are linearly dependent and $\|A^*y \| \geq \|A^*x \| $.
\end{prop}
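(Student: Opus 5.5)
We will prove the two implications separately. Throughout we use the convention $(u\otimes v)z=\langle z,v\rangle u$, and write $T=x\otimes y$ and $K={\rm span}\{x,y\}$. Since $T$ is a rank-one nilpotent with orthonormal $x,y$, we have $w(T)=\frac12$. More generally, for unit $y$ we will use the standard formula
$$w(a\otimes y)=\tfrac12\bigl(|\langle a,y\rangle|+\|a\|\bigr).$$

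\emph{(1)$\Rightarrow$(2).} We argue by contraposition. Suppose (2) fails, and test $C=t\,u\otimes y$ with $t>0$ small and $u\in\mathbb C^n$ to be chosen. Then $T+AC=(x+tAu)\otimes y$, so
$$2w(T+AC)=\|x+tAu\|+t|\langle Au,y\rangle| = 1+t\bigl({\rm Re}\langle Au,x\rangle+|\langle u,A^*y\rangle|\bigr)+O(t^2).$$
Replacing $u$ by a unimodular multiple, ${\rm Re}\langle Au,x\rangle$ can be made equal to $-|\langle u,A^*x\rangle|$. It therefore suffices to find $u$ with $|\langle u,A^*y\rangle|<|\langle u,A^*x\rangle|$, since then $w(T+AC)<\frac12$ for small $t$. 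If $A^*x,A^*y$ are independent, take $u\perp A^*y$ with $\langle u,A^*x\rangle\ne0$. If they are dependent but $\|A^*x\|>\|A^*y\|$, then $A^*x\neq0$ and $A^*y=cA^*x$ with $|c|<1$, and we take $u=A^*x$. In both cases (1) fails.

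\emph{(2)$\Rightarrow$(1).} We reduce to the $2\times2$ case and apply Corollary \ref{prop2.011}. If $A^*x=A^*y=0$, then any maximizing vector $z=(x+y)/\sqrt2$ of $T$ satisfies $A^*z=0$, and Proposition \ref{prop2.0} gives (1). Otherwise set
$$B=\bigl(P_{[K]}AA^*P_{[K]}|_K\bigr)^{1/2}\in\mathcal B(K)\cong\mathcal M_2(\mathbb C).$$
Then $B\ne0$ and $\|Bz\|=\|A^*z\|$ for $z\in K$. Moreover $\ker B=\ker(A^*|_K)$, and hence ${\rm ran}\,B={\rm ran}\,(P_{[K]}A)$. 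Two consequences follow. First, $Bx,By$ are dependent precisely when $A^*x,A^*y$ are, and $\|By\|\ge\|Bx\|$. So by Corollary \ref{prop2.011} (with $B=B^*$), $T|_K\perp^s_w B$ in $\mathcal B(K)$. Second, for any $C\in\mathcal M_n(\mathbb C)$ the compression $P_{[K]}ACP_{[K]}|_K$ has range inside ${\rm ran}\,B$, so it equals $BC'$ for some $C'\in\mathcal B(K)$ (take $C'=B^{\dagger}P_{[K]}ACP_{[K]}|_K$). Since the numerical range of a compression lies in the numerical range of the operator,
$$w(T+AC)\ \ge\ w\bigl(P_{[K]}(T+AC)P_{[K]}|_K\bigr)=w(T|_K+BC')\ \ge\ w(T|_K)=\tfrac12=w(T).$$
This proves (1).

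The step needing most care is the reduction in (2)$\Rightarrow$(1): choosing $B$ so that it simultaneously preserves the norms $\|A^*z\|$ for $z\in K$, the dependence condition, and the range of the compressed perturbations, so that Corollary \ref{prop2.011} applies verbatim. The other direction is a direct first-order perturbation computation based on the rank-one numerical radius formula, which we would verify once.
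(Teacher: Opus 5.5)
Your proof is correct, but it takes a different route from the paper's.

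\textbf{The paper's route.} The paper treats both directions through Proposition \ref{1.1}. It parametrizes the maximizing vectors $z_\delta=\frac{1}{\sqrt2}(x+e^{i\delta}y)$ of $x\otimes y$ and rewrites $x\otimes y\perp_w AB$ as $0\in{\rm co}\,\{f_B(e^{i\theta})\}$ for an explicit quadratic $f_B$. For (2)$\Rightarrow$(1) it exhibits a zero of $f_B$ in the closed unit disk, with Proposition \ref{prop2.0} covering the case $\|A^*x\|=\|A^*y\|$. For (1)$\Rightarrow$(2) it picks $B_0$ so that $f_{B_0}$ is a nonzero constant or a linear polynomial whose image circle misses $0$.

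\textbf{Your (1)$\Rightarrow$(2).} You replace all of this by the single perturbation $C=t\,u\otimes y$ and the closed formula for $w(a\otimes y)$. This is more elementary and gives an explicit first-order decrease of $w$. It also never touches the winding-number criterion Eq.~(\ref{2.011}). The ``only if'' half of that criterion actually fails for general quadratics, e.g.\ $f(u)=(u-1.01)^2$. The paper only needs that half for linear $f_{B_0}$, where it holds.

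\textbf{Your (2)$\Rightarrow$(1).} This is a genuine reduction. You compress to $K={\rm span}\{x,y\}$ and replace $A$ by $B=(P_{[K]}AA^*P_{[K]}|_K)^{1/2}$. This preserves the Gram data of $A^*x,A^*y$ and the range of $P_{[K]}A$. Hence every compressed perturbation factors as $BC'$, and the $n\geq3$ statement is inherited from the $2\times2$ one.

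\textbf{What each approach buys.} The price of your reduction is dependence on Corollary \ref{prop2.011}, which the paper does not prove but imports from Proposition \ref{prop2.01}. Since your $B$ has rank one, only parts (i)--(ii) of that proposition are actually used. The paper's computation, by contrast, is self-contained modulo Proposition \ref{1.1} and never uses $n\geq3$, so it reproves the $2\times2$ case as well.
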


\begin{proof}
Take any $A,B\in\mathcal{M} _n(\mathbb{C} )$ with $A\not=0$ and any $x,y\in {\rm sphere}\,\mathbb{C} ^n$ with $\langle x,y\rangle =0$.
  It is easy to verify that
  \begin{equation}\label{2.01}
    \{z\in{\rm sphere}\,\mathbb{C} ^n: |\langle (x\otimes y)z,z\rangle |=w(x\otimes y)\}=\{\frac{1}{\sqrt{2}}(x+e^{i\delta }y):\delta \in[0,2\pi)\}.
  \end{equation}
    Write  $z_\delta =\frac{1}{\sqrt{2}}(x+e^{i\delta }y)$, and then $\langle (x\otimes y)z_\delta ,z_\delta \rangle =\frac{1}{2}e^{i\delta }$. A direct calculation yields
  $$\langle ABz_\delta ,z_\delta \rangle =\frac{1}{2}(\langle ABx,x\rangle +e^{-i\delta }\langle ABx,y\rangle +e^{i\delta }\langle ABy,x\rangle +\langle ABy,y\rangle ),$$   which means
  $$\overline{\langle (x\otimes y)z_\delta ,z_\delta \rangle}\langle ABz_\delta ,z_\delta \rangle=\frac{1}{4}[e^{-2i\delta }\langle Bx,A^*y\rangle+e^{-i\delta }(\langle Bx,A^*x\rangle+\langle By,A^*y\rangle  )+\langle By,A^*x\rangle  ] .$$
Define a function $f_B$ on $\mathbb C$ by
$$f_B(u)=\langle Bx,A^*y\rangle u^2+(\langle Bx,A^*x\rangle+\langle By,A^*y\rangle  )u+\langle By,A^*x\rangle.$$ By Proposition \ref{1.1}, one has  that
  \begin{equation}\label{2.1}
   x\otimes y\perp_w AB {\rm \ if \ and \  only \ if\ }0\in{\rm co}\,\{f_B(e^{i\theta }):\theta \in[0,2\pi)\}.
  \end{equation}

Let $N_B$ be the number of elements in the set $\{u:f_B(u)=0{\rm \ and \ }|u|<1\}$. Note that, according to the argument principle,  the change in the argument of  $f_B(u)$ is $2\pi N_B$ after the independent variable $u$ makes a complete counterclockwise (positive direction) circuit along $\{e^{i\theta }:\theta \in[0,2\pi)\}$.  Thus,  we get
\begin{equation}\label{2.011}
   0\in{\rm co}\,\{f_B(e^{i\theta }):\theta \in[0,2\pi)\} {\rm \ if \ and \  only \ if\ }N_B\geq 1.
  \end{equation}

(2)$\Rightarrow$(1): Assume that $A^*x$ and $A^*y$ are linearly dependent.  If
$\|A^*y \| >\|A^*x \| >0$, writing
$A^*x=\lambda A^*y$ with $\lambda \in\mathbb{C} $ and $0<|\lambda |<1$,  then
  $$f_B(u)=\langle Bx,A^*y\rangle u^2+(\bar \lambda\langle Bx,A^*y\rangle+\langle By,A^*y\rangle  )u+\bar \lambda\langle By,A^*y\rangle.$$
  This implies $f_B(-\bar\lambda)=0$. Now, using $|-\bar\lambda|<1$ and Eqs.(\ref{2.1})-(\ref{2.011}), we infer that $x\otimes y\perp_w AB$  for all $B\in\mathcal{M} _n(\mathbb{C} )$, which means $x\otimes y\perp^s_w A$.

 If  $\|A^*x \| =\|A^*y \| \geq 0$, then there exists some $\delta \in[0,2\pi)$ such that $A^*z_\delta=\frac{1}{\sqrt{2}}(A^*x+e^{i\delta }A^*y) =0$, which and Proposition \ref{prop2.0} imply $x\otimes y\perp^s_w A$.

If $A^*x=0$ and $A^*y\neq 0$, then $f_B(u)=\langle Bx,A^*y\rangle u^2+\langle By,A^*y\rangle  u$, which yields $f_B(0)=0$, which and Eqs.(\ref{2.1})-(\ref{2.011}) imply $x\otimes y\perp^s_w A$.

 (1)$\Rightarrow$(2):
If  $A^*x$ and $A^*y$ are linearly independent, then  we can
take $B_0$ such that $\langle B_0x,A^*y\rangle=\langle B_0x,A^*x\rangle=\langle B_0y,A^*y\rangle=0$ and $\langle B_0y,A^*x\rangle\neq 0$.  So $f_{B_0}(u)=\langle B_0y,A^*x\rangle\not=0$. By  Eq.(\ref{2.1}), we  have $x\otimes y\not\perp_w AB_0$, which implies $x\otimes y\not\perp^s_w A$. Thus $A^*x$ and $A^*y$ must be linearly denpendent.

If $A^*y=0$ and $A^*x\neq 0$, then  $f_B(u)=\langle Bx,A^*x\rangle u+\langle By,A^*x\rangle$. By taking $B_0$ such that $|\langle B_0y,A^*x\rangle|>|\langle B_0x,A^*x\rangle|$, one gets that $x\otimes y\not\perp_w AB_0$ by Eqs.(\ref{2.1})-(\ref{2.011}). Hence $x\otimes y\not\perp^s_w A$.
If $A^*x$ and $A^*y$ are linearly dependent and $\|A^*x \| >\|A^*y \| >0$, then
take $B_0$ such that $\langle B_0x,A^*y\rangle=\langle B_0x,A^*x\rangle=0$ and $0<|\langle B_0y,A^*y\rangle|<|\langle B_0y,A^*x\rangle|$. Thus $f_{B_0}(u)=\langle B_0y,A^*y\rangle  u+\langle B_0y,A^*x\rangle$. It follows from  Eqs.(\ref{2.1})-(\ref{2.011}) that $x\otimes y\not\perp_w AB_0$. Hence $x\otimes y\not\perp^s_w A$.  So we must have $\|A^*x\|\leq\|A^*y\|$, as desired.

The proof is finished.
\end{proof}

The following two propositions characterize invertible operators by the strong numerical radius orthogonality.

\begin{prop}\label{prop2.3} {\rm (\cite[Proposition 4.2]{SQH})}
Assume that $B\in\mathcal{B}(H)$. Then the following statements are equivalent.

   {\rm   (i)}   For any $A\in\mathcal{B} (H)$, we have $A\perp ^s_wB\Rightarrow A=0$.

  {\rm   (ii)}   $0\notin\sigma _{r}(B)$, that is, $B$ is right invertible.
   \end{prop}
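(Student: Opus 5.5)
We prove both implications, using Proposition \ref{prop2.0} for one direction and a direct numerical range estimate for the other.

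\textbf{(ii)$\Rightarrow$(i).} Assume $B$ is right invertible, so $BC_0=I$ for some $C_0\in\mathcal{B}(H)$. Equivalently, $B^*$ is bounded below: $\|B^*x\|\geq c\|x\|$ for some $c>0$.

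Let $A\perp^s_w B$. Taking $C=-C_0A$ in the definition gives
$$w(A)\leq w(A+BC)=w(A-BC_0A)=w(0)=0.$$
Since $w$ is a norm on $\mathcal{B}(H)$, this forces $A=0$. This direction is immediate.

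\textbf{(i)$\Rightarrow$(ii).} We argue by contraposition. Suppose $B$ is not right invertible; we must produce a nonzero $A$ with $A\perp^s_w B$.

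Since $B$ is not right invertible, $B^*$ is not bounded below. Hence there is a sequence $\{x_n\}\subset{\rm sphere}\,H$ with $\|B^*x_n\|\to 0$. We then try two choices of $A$.

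\emph{Case 1: $\ker B^*\neq\{0\}$.} Pick a unit vector $x\in\ker B^*$ and set $A=x\otimes x$, a rank-one projection. Then $w(A)=1$ and $\langle Ax,x\rangle=1$. Taking the constant sequence $x_n=x$ in Proposition \ref{prop2.0} gives $A\perp^s_w B$ with $A\neq 0$.

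\emph{General case.} Only approximate eigenvectors of $B^*$ for $0$ may exist. Here we need a single nonzero $A$ whose numerical radius is attained asymptotically along some sequence $\{x_n\}$ with $\|B^*x_n\|\to0$.

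The main obstacle is building such an $A$, since a fixed rank-one operator need not see the whole sequence. The plan is as follows.

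\begin{enumerate}[(a)]
\item Pass to a subsequence of $\{x_n\}$. If $x_n\to x$ in norm along a subsequence, then $B^*x=0$ and we are back in Case 1.
\item Otherwise, by a standard gliding-hump/perturbation argument, replace $\{x_n\}$ by an orthonormal sequence $\{y_k\}$ with $\|B^*y_k\|\to 0$. Such a sequence exists whenever $B^*$ is not bounded below and has no kernel, since then $0$ lies in the essential approximate point spectrum of $B^*$.
\item Take $A$ to be the orthogonal projection onto $\overline{\rm span}\{y_k\}$. Then $A$ is a nonzero positive operator, so it is normaloid with $w(A)=\|A\|=1$, and $\langle Ay_k,y_k\rangle=1$ for every $k$.
\item Apply Proposition \ref{prop2.0} (or Proposition \ref{prop2.1}, since $A$ is normaloid) to the sequence $\{y_k\}$ to conclude $A\perp^s_w B$.
\end{enumerate}

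The delicate step is (b): extracting an orthonormal approximate kernel sequence once the kernel of $B^*$ is trivial.

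An alternative that avoids (b) is to take $A=P_{[\ker B^*]}$ when $\ker B^*\neq\{0\}$. When $\ker B^*=\{0\}$, note that $B^*$ not bounded below means ${\rm ran}\,B^*$ is not closed. One can then use the spectral projection $E$ of $BB^*$ on $[0,\varepsilon]$, which is nonzero for every $\varepsilon>0$. Choosing $\varepsilon_k\downarrow0$, the differences $E[0,\varepsilon_k]-E[0,\varepsilon_{k+1}]$ are nonzero for infinitely many $k$. Unit vectors taken from these differences are orthonormal and satisfy $\|B^*y\|^2=\langle BB^*y,y\rangle\leq\varepsilon_k$. This settles (b).
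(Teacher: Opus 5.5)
Your argument is correct. The paper gives no proof of this statement; it quotes it from \cite{SQH}. So the comparison is with the natural direct argument.

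Your (ii)$\Rightarrow$(i), choosing $C=-C_0A$ so that $A+BC=0$, is exactly what one expects.

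For (i)$\Rightarrow$(ii) you take a long detour. The ``main obstacle'' you identify disappears if you take $A=I$. Since $\langle Ix,x\rangle=1=w(I)$ for \emph{every} unit vector $x$, use any $\{x_n\}\subset{\rm sphere}\,H$ with $\|B^*x_n\|\to0$, which exists because $B^*$ is not bounded below. For each $C$ this gives
$$|\langle (I+BC)x_n,x_n\rangle|=|1+\langle Cx_n,B^*x_n\rangle|\to 1,$$
so $I\perp^s_w B$, directly or by Proposition~\ref{prop2.0}. This makes Case~1, step (a), the orthonormalization (b) and the spectral projections of $BB^*$ all unnecessary. Your projection onto $\overline{\rm span}\{y_k\}$ is just a partial identity doing the same job. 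What your route buys is a witness $A$ of special form (rank one, or a projection), which the equivalence does not need.

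If you keep your version, two small repairs are needed:
\begin{itemize}
\item Step (a) is redundant. Its ``otherwise'' branch is justified only under $\ker B^*=\{0\}$, which Case~1 already lets you assume.
\item The claim that infinitely many $E[0,\varepsilon_k]-E[0,\varepsilon_{k+1}]$ are nonzero uses $E\{0\}=P_{\ker BB^*}=P_{\ker B^*}=0$, and you should say so.
\end{itemize}
Even within your framework, a single fixed projection suffices: take $A=E[0,1]$. Any unit $y_k\in{\rm ran}\,E[0,\varepsilon_k]$ then has $\langle Ay_k,y_k\rangle=1=w(A)$ and $\|B^*y_k\|^2\le\varepsilon_k$, with no orthogonality and no kernel assumption needed.
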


 \begin{prop}\label{prop2.4} {\rm (\cite[Corollary 4.3]{SQH})}
Assume that $B\in\mathcal{B}_s (H)$.  Then the following statements are equivalent.

  {\rm   (i)}  For any $A\in\mathcal{B} _s(H)$, we have $A\perp ^s_wB\Rightarrow A=0$.

 {\rm   (ii)}    $0\notin\sigma (B)$, that is,  $B$ is invertible.

 \end{prop}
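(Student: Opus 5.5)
The plan is to prove the two implications separately. The implication (ii)$\Rightarrow$(i) reduces to Proposition \ref{prop2.3}. The implication (i)$\Rightarrow$(ii) goes by contraposition: a spectral projection of $B$ is the nonzero self-adjoint operator strongly orthogonal to $B$, and Proposition \ref{prop2.0} verifies the orthogonality.

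\emph{(ii)$\Rightarrow$(i).} If $B\in\mathcal B_s(H)$ is invertible, then in particular $B$ is right invertible. By Proposition \ref{prop2.3}, every $A\in\mathcal B(H)$ with $A\perp^s_w B$ satisfies $A=0$. Restricting to $A\in\mathcal B_s(H)$ gives (i). This relies on the strong orthogonality $A\perp^s_w B$ meaning the same thing, with $C$ ranging over $\mathcal B(H)$, whether we regard $A$ in $\mathcal B_s(H)$ or in $\mathcal B(H)$.

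\emph{(i)$\Rightarrow$(ii).} Suppose $B=B^*$ is not invertible, so $0\in\sigma(B)$. Let $E$ be the spectral measure of $B$ and set $A=E((-1,1))$.
\begin{itemize}
\item Since $(-1,1)$ is a relatively open set of the spectrum containing the spectral point $0$, we have $A\neq 0$.
\item $A$ is self-adjoint, so it lies in $\mathcal B_s(H)$.
\item $A$ is a nonzero projection, so $w(A)=\|A\|=1$.
\end{itemize}
For each $n\ge 1$, the projection $E((-1/n,1/n))$ is nonzero, again because $0\in\sigma(B)$. Pick a unit vector $x_n$ in its range. Then $x_n\in{\rm ran}\,A$, hence $\langle Ax_n,x_n\rangle=1=w(A)$. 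By the spectral theorem,
$$\|B^*x_n\|=\|Bx_n\|=\|BE((-1/n,1/n))x_n\|\le \tfrac1n\to 0 .$$
Proposition \ref{prop2.0} now gives $A\perp^s_w B$ with $0\neq A\in\mathcal B_s(H)$, contradicting (i). Hence $B$ must be invertible.

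The only point needing care is producing a \emph{single} nonzero self-adjoint $A$ that works when $0$ is in the spectrum but is not an eigenvalue. Using a fixed spectral projection with a nested family of sub-projections handles this uniformly, so no case split on eigenvalue versus approximate eigenvalue is needed. I expect no step to be a real obstacle; the proof is essentially a direct application of Propositions \ref{prop2.3} and \ref{prop2.0} together with the spectral theorem.
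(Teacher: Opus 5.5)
Your argument is correct. The paper itself gives no proof of this statement: it is quoted from Corollary 4.3 of the authors' earlier work. So there is no proof in the text to compare against, and what you have written is a self-contained derivation from Propositions \ref{prop2.3} and \ref{prop2.0}.

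You located the one real issue correctly. For a non-invertible self-adjoint $B$, Proposition \ref{prop2.3} only produces some nonzero $A\in\mathcal B(H)$ with $A\perp^s_w B$, and that $A$ need not be self-adjoint. Your witness supplies a self-adjoint one: the spectral projection $E((-1,1))$, tested against unit vectors in the ranges of $E((-1/n,1/n))$. It works uniformly whether or not $0$ is an eigenvalue.

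This is essentially the device the paper uses later, in the proof of Theorem \ref{prop5.5}. There, $P_{[H_1]}$ with $H_1=E([-\varepsilon,\varepsilon])$ is taken as a nonzero operator mutually strongly orthogonal to a non-invertible self-adjoint $A$.

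Your reading of $\perp^s_w$ on $\mathcal B_s(H)$, with $C$ still ranging over $\mathcal B(H)$, matches how the paper uses the relation in Section 5. For example, it applies Proposition \ref{prop2.1} to self-adjoint operators.

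Under that reading, (ii)$\Rightarrow$(i) does not even need Proposition \ref{prop2.3}. Taking $C=-B^{-1}A$ gives $w(A+BC)=0\ge w(A)$, so $A=0$ at once.
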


By \cite[Proposition 2.4]{SQH}, numerical radius orthogonality and B-J orthogonality are equivalent  for self-adjoint operators. However, the following proposition shows that strong numerical radius orthogonality and strong B-J orthogonality are not equivalent, and the strong numerical radius orthogonality is stronger.

\begin{prop}\label{prop3.1}
If $A, B\in\mathcal{B} (H)$ with $A$ being normaloid, then
$$A\perp ^s_wB\Rightarrow A\perp ^s_BB $$
In addition, there exist some    $A,B\in\mathcal{B} _s(H)$  such that $A\bot^s_B B$ and $A \not\perp^s_w B$.

\end{prop}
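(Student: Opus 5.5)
For the implication, I would use Proposition \ref{prop2.1}. Assume $A$ is normaloid and $A\perp^s_w B$. If $A=0$ there is nothing to prove, so let $A\neq 0$. Proposition \ref{prop2.1} then gives unit vectors $x_n$ with $|\langle Ax_n,x_n\rangle|\to w(A)=\|A\|$ and $\|B^*x_n\|\to 0$.

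Fix any $C\in\mathcal{B}(H)$. Then
$$\|A+BC\|\ge |\langle (A+BC)x_n,x_n\rangle| \ge |\langle Ax_n,x_n\rangle|-|\langle Cx_n,B^*x_n\rangle| \ge |\langle Ax_n,x_n\rangle|-\|C\|\,\|B^*x_n\|.$$
Letting $n\to\infty$ gives $\|A+BC\|\ge \|A\|$, which is exactly $A\perp^s_B B$. This direction is routine.

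For the counterexample, I would look for self-adjoint $A,B$ where strong B-J orthogonality holds but no norming sequence $x_n$ for $A$ has $B^*x_n\to 0$. Self-adjoint operators are normaloid, so by Proposition \ref{prop2.1} this failure is the same as $A\not\perp^s_w B$.

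Strong B-J orthogonality in $\mathcal{B}(H)$ is governed by a criterion of the same type, and that criterion only uses $\|Ax_n\|\to\|A\|$ together with $\|B^*Ax_n\|\to 0$. This is weaker than what the numerical radius needs: the latter requires $x_n$ to be a norming vector of the quadratic form, i.e. nearly an eigenvector for an extreme eigenvalue, and also requires $B^*x_n\to 0$. I would therefore separate the two requirements by placing the kernel of $B$ in the range direction $Ax$ rather than in the direction $x$.

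Concretely, in $\mathbb{C}^2$ (or a $2$-dimensional direct summand of $H$, extended by $0$) I would take
$$A=\begin{pmatrix}0&1\\1&0\end{pmatrix},\qquad B=\begin{pmatrix}1&0\\0&0\end{pmatrix}.$$

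First, $A\perp^s_B B$. For the unit vector $e_1$ we have $Ae_1=e_2$, so $BCe_1$ lies in ${\rm span}\{e_1\}$, which is orthogonal to $e_2$. Hence
$$\|(A+BC)e_1\|\ge |\langle (A+BC)e_1,e_2\rangle|=1=\|A\|$$
for every $C$.

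Second, $A\not\perp^s_w B$. The norming vectors of $A$ for $w$ are the unit multiples of $(1,\pm1)/\sqrt2$, and $B^*$ applied to these has norm $1/\sqrt2\neq0$. By compactness in finite dimensions, no sequence with $|\langle Ax_n,x_n\rangle|\to w(A)$ can have $B^*x_n\to 0$. Proposition \ref{prop2.1} then gives $A\not\perp^s_w B$.

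The main obstacle is the check of $A\perp^s_B B$. Above I verify it directly, which is easy here because the witness vector $e_1$ works for all $C$ at once. If $\dim H>2$, I would take the direct sum $A\oplus 0$, $B\oplus 0$ and repeat both checks: the witness $e_1$ still works, and the $w$-norming vectors of $A\oplus 0$ are still supported in the $2$-dimensional block.
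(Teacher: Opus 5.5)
Your proof is correct, and your counterexample is the paper's. In the paper's notation it is $A=x\otimes y+y\otimes x$ and $B=x\otimes x$ with $x\perp y$, which is exactly your pair of matrices with respect to $[x,y]$, extended by $0$. The failure of $A\perp^s_w B$ is obtained the same way in both, from Proposition \ref{prop2.1} and the fact that $B^*$ does not annihilate the norming vectors $\frac{1}{\sqrt2}(x\pm y)$.

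The difference is in how you verify strong Birkhoff--James orthogonality. The paper upgrades the norming sequence to an approximate eigenvector sequence, $\|Ax_n-\lambda x_n\|\to 0$ with $|\lambda|=\|A\|$, deduces $\|B^*Ax_n\|\to 0$, and then invokes the characterization \cite[Proposition 2.8]{AR}. In the example it cites the same result via $\|Ax\|=\|A\|$ and $B^*Ax=0$. You instead check the defining inequality $\|A+BC\|\ge\|A\|$ directly. For the implication you use the estimate $|\langle (A+BC)x_n,x_n\rangle|\ge|\langle Ax_n,x_n\rangle|-\|C\|\,\|B^*x_n\|$, and for the example the witness vector $e_1$. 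This makes your argument self-contained and independent of \cite{AR}.

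Your compactness step also deals with approximately norming sequences, which is what Proposition \ref{prop2.1} actually requires; the paper leaves this implicit by examining only exact maximizers. For $\dim H=\infty$, phrase that step for sequences: such a sequence has component off the $2$-dimensional block tending to $0$, so compactness of the unit sphere of that block still applies.

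Finally, the implication is simpler than either route. Since $\|T\|\ge w(T)$ for every $T$, the hypothesis gives $\|A+BC\|\ge w(A+BC)\ge w(A)=\|A\|$ for all $C$ directly from the definition of $\perp^s_w$. Proposition \ref{prop2.1} is not needed at all.
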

\begin{proof}

  Assume that $A\in\mathcal{B} (H)$ is normaloid. If $A\perp ^s_wB$ for some $B\in\mathcal B(H)$,  by Proposition \ref{prop2.1}, there exists a sequence $\{x_n\}\subset {\rm sphere}\,H$ such that
  $$\lim_{n\to \infty} |\langle Ax_n,x_n\rangle |=w(A)=\|A\|  {\rm \ and \ } \lim_{n\to \infty} \|B^*x_n \| =0.$$
As
   $$\| A\|=\lim_{n\to \infty} |\langle Ax_n,x_n\rangle |\leq \lim_{n\to \infty} \| Ax_n\| \leq \|A \| ,$$
one gets $\lim  	\limits_{n \to \infty} |\langle Ax_n,x_n\rangle |=\lim  	\limits_{n \to \infty}  \| Ax_n\| = \|A \|$. Write $\lim  	\limits_{n \to \infty} \langle Ax_n,x_n\rangle=\lambda $ with   $|\lambda |=w(A)$. Then
   $$\| Ax_n-\lambda x_n\|^2=\langle Ax_n-\lambda x_n,Ax_n-\lambda x_n\rangle= \|Ax_n \|^2-\bar\lambda \langle Ax_n,x_n\rangle -\lambda \langle x_n,Ax_n\rangle +|\lambda |^2.$$
It follows  that $\lim  	\limits_{n \to \infty} \|Ax_n-\lambda x_n \|=0 $.  Also note  that $\lim  	\limits_{n \to \infty} \|B^*Ax_n\|=\lim  	\limits_{n \to \infty} \|\lambda B^*x_n\|=0$. By \cite[Proposition 2.8]{AR}, we have $A\perp ^s_BB$.

Now, take $A=x\otimes y+y\otimes x$,  which is self-adjoint, and $B=x\otimes x$ with $x,y\in {\rm sphere}\,H$ and $\langle x,y\rangle =0$. Then  $\| Ax\|=\| A\|  $ and $B^*Ax=0$. This, by \cite[Proposition 2.8]{AR}, implies that $A\bot ^s_BB$.
However,   if $|\langle Au,u\rangle |=w(A)$ for some $u\in {\rm sphere}\,H$, then   $u=\frac{\sqrt{2}}{2}(x+y)$ or $\frac{\sqrt{2}}{2}(x-y)$. So  $B^*u\neq 0$. By Proposition \ref{prop2.1}, we see that $A \not\perp^s_w B$.
\end{proof}

\section{ The diameter of $\varGamma (\mathcal A, \perp\!\!\!\!\perp_w)$ with $\mathcal A=\mathcal{M}_2 (\mathbb C)$ or $\mathsf{H} _2$}

 It is known from  \cite{AG} that, for $\dim H\geq 2$,  both the numerical radius orthographs   $\varGamma (\mathcal{B} (H), $\\$\perp\!\!\!\!\perp_w)$ and $\varGamma (\mathcal{B}_s (H), \perp\!\!\!\!\perp_w)$  are connected, that is, they have no isolated vertices. It is  also known that  Diam\,$\varGamma (\mathcal{B} (H), \perp\!\!\!\!\perp_w)={\rm Diam}\, \varGamma (\mathcal{B}_s (H), \perp\!\!\!\!\perp_w)=2$ if $\dim H\geq 3$.  The purpose of this section is to give the diameters of these two graphs for the case when $\dim H=2$.

In fact, we have the following result.


\begin{thm}\label{prop6.3}
	{\rm Diam}$\ \varGamma (\mathcal{M}_2(\mathbb{C}), \perp\!\!\!\!\perp_w)=3$ and	{\rm Diam} $\varGamma (\mathsf{H} _2, \perp\!\!\!\!\perp_w)=2$.

\end{thm}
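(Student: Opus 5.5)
The plan is to prove the two diameter statements separately. For each, I show an upper bound by exhibiting short paths, and a lower bound by exhibiting a pair of vertices with no shorter path. The main tool throughout is Proposition~\ref{1.1}(3), which reduces $T\perp_w A$ to a statement about the set $M_T=\{x\in{\rm sphere}\,\mathbb C^2:|\langle Tx,x\rangle|=w(T)\}$; in dimension $2$ everything is compact, so the sequences can be replaced by points. Two consequences will be used repeatedly:
\begin{itemize}
\item if $M_T$ is a single ray $\mathbb{C}x$, then $T\perp_w A$ if and only if $\langle Ax,x\rangle=0$;
\item if $\langle Ax,x\rangle=0$ for some $x\in M_T$, then $T\perp_w A$.
\end{itemize}
I also use that $W(T)$ is an elliptical disc (elliptical range theorem) to describe $M_T$ explicitly.

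\emph{The case $\mathsf H_2$.} For a Hermitian $A$ with eigenvalues $\lambda_1\ge\lambda_2$, $M_A$ is spanned by eigenvectors of the eigenvalue of largest modulus.
\begin{itemize}
\item Lower bound ${\rm Diam}\ge2$: the graph is not complete. For instance $I$ and $\mathrm{diag}(1,1/2)$ are not adjacent, since $\langle \mathrm{diag}(1,\tfrac12)x,x\rangle>0$ for every unit $x$.
\item Upper bound ${\rm Diam}\le2$: given Hermitian $A,B$, I will find a Hermitian $C\neq0$ with $C\perp\!\!\!\!\perp A$ and $C\perp\!\!\!\!\perp B$.
\item Choose unit vectors $a\in M_A$ and $b\in M_B$.
\item Try $C$ traceless, so $C=\mu(u\otimes u-u^\perp\otimes u^\perp)$ and $M_C$ is the whole sphere. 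Then $C\perp_w A$ and $C\perp_w B$ hold as soon as $0$ lies in the convex hull of the relevant products. Because $W(A)$ is a real interval, this amounts to asking that $A$ (and $B$) are not definite after a sign change, or else it is handled via the freedom in $u$.
\item The reverse conditions $A\perp_w C$ and $B\perp_w C$ become $\langle Ca,a\rangle=0$ and $\langle Cb,b\rangle=0$, or weaker conditions when $M_A$ is larger.
\item Traceless Hermitian $C$ form the real $3$-space of Pauli vectors, so two linear conditions always leave a nonzero solution. I expect a short case check (according to whether $M_A$ is one ray or everything) to close this.
\end{itemize}

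\emph{The case $\mathcal M_2(\mathbb C)$: upper bound.} I take as hub the class $\mathcal N$ of traceless normal matrices, i.e. unitary conjugates of $\mathrm{diag}(1,-1)$ times scalars, for which $M_N$ is the whole sphere and $W(N)$ is a segment through $0$.
\begin{itemize}
\item Step 1: every $A$ is adjacent to some element of $\mathcal N$ (or of a slightly larger family). Given $x\in M_A$, I choose $N$ with $\langle Nx,x\rangle=0$, which gives $A\perp_w N$. Then I adjust the remaining parameters of $N$ so that $0$ lies in the convex hull required for $N\perp_w A$; since $M_N$ is the full sphere there is a lot of room.
\item Step 2: any two elements of the hub are adjacent, or are joined by a hub element. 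Counting real parameters should in fact give adjacency between suitable choices.
\item Together these give distance at most $3$.
\end{itemize}

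\emph{The case $\mathcal M_2(\mathbb C)$: lower bound, and where the difficulty lies.} The lower bound is the main obstacle: I must produce $A,B$ with no common neighbour and $A\not\perp\!\!\!\!\perp B$.
\begin{itemize}
\item I would try $A=e_1\otimes e_1$ and $B=f\otimes f$ with $f$ in generic position, or non-normal variants with a single-ray $M$, perturbed so that $W(A)$ and $W(B)$ are genuinely elliptical.
\item By the first consequence above, a common neighbour $C$ must satisfy $\langle Ce_1,e_1\rangle=\langle Cf,f\rangle=0$.
\item It must also satisfy the convex-hull conditions $C\perp_w A$ and $C\perp_w B$ over $M_C$, which read $0\in{\rm co}\{e^{-i\theta}\langle Cz,z\rangle\overline{\langle Az,z\rangle}\}$-type constraints for $z\in M_C$.
\item I would parametrize such $C$ via the elliptical range, where $W(C)$ is an ellipse containing $0$ on two prescribed "directions", and show that the two convex-hull conditions are incompatible for a well-chosen $f$.
\item If the rank-one choice fails, I would switch to $A,B$ with $M_A$ and $M_B$ single rays and $W(A)$, $W(B)$ not containing $0$. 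This makes the constraints $\langle Ax,x\rangle$ for $x\in M_C$ lie in a half-plane, which then rules out the convex hull containing $0$.
\end{itemize}
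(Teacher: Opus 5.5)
\textbf{There is a genuine gap in the $\mathcal M_2(\mathbb C)$ part.} It starts from a false claim and breaks the upper bound.

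\emph{The hub of traceless normal matrices does not give distance $3$.} For a traceless normal $N=\mu(u\otimes u-u^\perp\otimes u^\perp)$,
\[
|\langle Nz,z\rangle|=|\mu|\,\bigl|\,|\langle z,u\rangle|^2-|\langle z,u^\perp\rangle|^2\bigr| .
\]
So $M_N$ consists only of the two eigenvector rays, not the whole sphere. Only scalar matrices have $M_T$ equal to the sphere.

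With the correct $M_N$, two hub elements satisfy $N_1\perp_w N_2$ iff $|\langle u_1,u_2\rangle|^2=\frac12$. Indeed, both products in Proposition \ref{1.1} equal $\mu_1\overline{\mu_2}\,(2|\langle u_1,u_2\rangle|^2-1)$.

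Moreover, a typical $A$ has only one hub neighbour. Suppose $M_A\subset[x]$ and $y\perp x$. Then $A\perp_w N$ forces $u=\frac{1}{\sqrt2}(x+e^{i\delta}y)$. Hence $\langle Au,u\rangle,\langle Au^\perp,u^\perp\rangle=\frac12({\rm tr}\,A\pm t)$ with $t=\langle Ay,x\rangle e^{i\delta}+\langle Ax,y\rangle e^{-i\delta}$. Now $N\perp_w A$ requires these two numbers to lie on a common ray from $0$. Since $t$ runs over a segment through $0$ (Proposition \ref{prop4.2}), this forces $t=0$ unless that segment is degenerate or parallel to ${\rm tr}\,A$, and so $N$ is determined. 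For instance, the only hub neighbour of $A_0=\bigl(\begin{smallmatrix}5&i\\ i&1\end{smallmatrix}\bigr)$ is $\sigma_y=\bigl(\begin{smallmatrix}0&-i\\ i&0\end{smallmatrix}\bigr)$.

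Step 1 is therefore true, but for this reason, not because there is ``a lot of room''. The parameter count in Step 2, however, is wrong. Take $A_0$ and $VA_0V^*$, where $V$ is a unitary with $V\sigma_yV^*=\frac{1}{\sqrt2}\bigl(\begin{smallmatrix}1&-i\\ i&-1\end{smallmatrix}\bigr)$. Their unique hub neighbours are not adjacent, so your hub only produces a path $A\,N_A\,N\,N_B\,B$ of length $4$.

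The missing idea is the paper's hub of square-zero matrices. For $x\in M_A$, both $x\otimes y$ and $y\otimes x$ are adjacent to $A$ (Proposition \ref{prop6.2}). Also $x\otimes y\perp\!\!\!\!\perp_w u\otimes v$ iff $|\langle u,x\rangle|\le|\langle u,y\rangle|$ (Proposition \ref{prop6.1}, applied in both directions). Adjacency there is an inequality rather than an equation, so one of $x\otimes y$, $y\otimes x$ is always adjacent to $u\otimes v$, which gives length $3$.

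\emph{The lower bound for $\mathcal M_2(\mathbb C)$ is not established either.} Your first candidate cannot work. For $A=e_1\otimes e_1$, $B=f\otimes f$, the traceless Hermitian $C\neq0$ with $\langle Ce_1,e_1\rangle=\langle Cf,f\rangle=0$ is a common neighbour. Its eigenvectors satisfy $|\langle u,e_1\rangle|^2=|\langle u,f\rangle|^2=\frac12$, so $\langle Au,u\rangle=\langle Au^\perp,u^\perp\rangle$. The two products $\pm\langle Cu,u\rangle\overline{\langle Au,u\rangle}$ then have $0$ in their hull, and likewise for $B$. The same computation shows that any two Hermitian matrices are at distance at most $2$ in $\varGamma(\mathcal M_2(\mathbb C),\perp\!\!\!\!\perp_w)$.

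Your fallback mechanism is also invalid. When $W(C)$ is centred at $0$, $M_C$ contains two rays with $\langle Cx_2,x_2\rangle=-\langle Cx_1,x_1\rangle$. Then $C\perp_w A$ only asks that $\langle Ax_1,x_1\rangle$ and $\langle Ax_2,x_2\rangle$ lie on a common ray from $0$, which is compatible with $0\notin W(A)$. Indeed $\sigma_y\perp\!\!\!\!\perp_w A_0$ although $0\notin W(A_0)$. So you cannot exclude common neighbours one matrix at a time. You need an explicit pair and a case analysis on the shape of $W(C)$ that combines the constraints from $A$ and from $B$.

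The paper takes $A_0$ and $B_0=\bigl(\begin{smallmatrix}2&1\\ 0&4\end{smallmatrix}\bigr)$ and distinguishes three cases for $W(C)$:
\begin{enumerate}
\item centre off $0$;
\item a non-circular ellipse centred at $0$, where $C\perp_w A_0$ fixes the phase of $C$ and then $C\perp_w B_0$ fails;
\item a disc centred at $0$.
\end{enumerate}

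\emph{The $\mathsf H_2$ part.} Your construction is exactly the paper's $C_{\theta_0}$ and is fine, but its justification must be the one just given. Every $a\in M_A$ is an eigenvector of $A$, so $\langle Au,u\rangle=\langle Au^\perp,u^\perp\rangle=\frac12{\rm tr}\,A$ (cf.\ Proposition \ref{prop7.1}). No definiteness condition on $A$ and no leftover freedom in $u$ is involved. Your lower bound via $I$ and ${\rm diag}(1,\frac12)$ is correct.
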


To prove this theorem, several propositions are needed.

\begin{prop}\label{prop6.1}
Assume that $x,y,u,v\in{\rm sphere}\,\mathbb{C} ^2$ with $\langle x,y\rangle =\langle u,v\rangle=0$. Then  $x\otimes y\perp_wu\otimes v$ if and only if $|\langle u,x\rangle |\leq |\langle u,y\rangle |$.
\end{prop}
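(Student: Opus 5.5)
We follow the method of the proof of Proposition \ref{prop4.1}, specialized to $B$ replaced by the identity and $A$ replaced by $u\otimes v$. By Eq.(\ref{2.01}), the set of unit vectors where $x\otimes y$ attains its numerical radius is $\{z_\delta=\frac{1}{\sqrt2}(x+e^{i\delta}y)\}$. We use the convention $(a\otimes b)z=\langle z,b\rangle a$, so that $\langle (x\otimes y)z_\delta,z_\delta\rangle=\frac12 e^{i\delta}$. By Proposition \ref{1.1}(3), together with compactness of the sphere in $\mathbb C^2$ and the fact that all maximizing sequences accumulate on this circle, $x\otimes y\perp_w u\otimes v$ if and only if $0\in{\rm co}\{\overline{\langle (x\otimes y)z_\delta,z_\delta\rangle}\langle (u\otimes v)z_\delta,z_\delta\rangle:\delta\}$. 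Indeed, the condition ``for each $\theta$ some point has ${\rm Re}(e^{-i\theta}w)\ge0$'' is exactly the statement that $0$ lies in the closed convex hull of the (compact) curve.

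Next we compute the curve explicitly. We have $\langle (u\otimes v)z_\delta,z_\delta\rangle=\langle z_\delta,v\rangle\langle u,z_\delta\rangle=\frac12(\langle x,v\rangle+e^{i\delta}\langle y,v\rangle)(\langle u,x\rangle+e^{-i\delta}\langle u,y\rangle)$. Multiplying by $\frac12e^{-i\delta}$ and setting $w=e^{-i\delta}$, we obtain, up to the positive factor $\frac14$, a polynomial
$$g(w)=(\langle x,v\rangle w+\langle y,v\rangle)(\langle u,x\rangle+w\langle u,y\rangle)$$
of degree at most $2$ restricted to the unit circle. As in Eq.(\ref{2.011}), by the argument principle $0\in{\rm co}\,g(\mathbb T)$ if and only if $g$ has a zero in the closed unit disc. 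The boundary case is needed here, since a zero on the circle trivially gives $0$ in the hull. The ``only if'' direction requires care: if $g$ has no zeros in $|w|\le1$, then $\log g$ has a single-valued branch on the disc, but this alone does not obviously exclude $0$ from the convex hull of a degree-two image.

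This is the main obstacle. We plan to exploit the relation between $u$ and $v$. Since $\langle u,v\rangle=0$ in $\mathbb C^2$, $v$ is determined by $u$ up to phase: writing $u=ax+by$, we get $v=c(-\bar b x+\bar a y)$ with $|c|=1$. Then $\langle x,v\rangle=-\bar c b$ and $\langle y,v\rangle=\bar c a$, while $\langle u,x\rangle=a$ and $\langle u,y\rangle=b$. Hence
$$g(w)=\bar c\,(a-bw)(a+bw)=\bar c\,(a^2-b^2w^2).$$
Thus the curve $g(\mathbb T)$ is a circle centered at $\bar c a^2$ of radius $|b|^2$, traversed twice, and its convex hull is the closed disc. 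Therefore $0$ lies in the hull if and only if $|a|^2\le|b|^2$, that is, $|\langle u,x\rangle|\le|\langle u,y\rangle|$. This direct computation sidesteps the argument-principle subtlety entirely. The remaining step is only to verify carefully the reduction via Proposition \ref{1.1}, in particular the limits along maximizing sequences.
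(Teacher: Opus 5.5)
Your argument is correct and is essentially the paper's proof. Both restrict to the maximizing circle $z_\delta$ of Eq.~(\ref{2.01}) and reduce $\overline{\langle (x\otimes y)z_\delta,z_\delta\rangle}\langle (u\otimes v)z_\delta,z_\delta\rangle$ to a circle of radius $|\langle u,y\rangle|^2$ whose centre lies at distance $|\langle u,x\rangle|^2$ from $0$. The only difference is that the paper first rephases so that $u=\alpha x+\beta y$ with $\alpha,\beta\ge 0$ and $v=\beta x-\alpha y$, obtaining $\frac14(\beta^2e^{-2i\xi}-\alpha^2)$, whereas you keep the phase $c$ and obtain $\frac14\bar c(a^2-b^2w^2)$.

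Your caution about the ``only if'' half of Eq.~(\ref{2.011}) is justified. For instance, $g(w)=(1-0.99w)^2$ has no zero in the closed unit disc, yet $0\in{\rm co}\,g(\mathbb{T})$, since $g(e^{\pm 0.1i})\approx -0.0095\mp 0.0030i$ and $g(-1)\approx 3.96$. The direct computation you use, like the paper's, does not need that direction.
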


\begin{proof} Take any $x,y,u,v\in{\rm sphere}\,\mathbb{C} ^2$ with $\langle x,y\rangle =\langle u,v\rangle=0$.
Without loss of generality, we can write  $u=\alpha x+\beta y$ with  $\alpha ,\beta \geq 0$; otherwise, consider $x\otimes e^{i(\delta -\theta) }y$ and $e^{-i\theta }u\otimes e^{i\delta }v$ instead, where $\theta =\arg \alpha $ and $\delta =\arg \beta $. Then  $v=\beta x-\alpha y$. With respect to the space decomposition $\mathbb{C} ^2=[x,y]$, one has
  $$x\otimes y=\left(
  \begin{array}{ccc}
      0 & 1     \\
      0   & 0
    \end{array}
  \right) {\rm \ and \ }u\otimes v=\left(
  \begin{array}{ccc}
      \alpha \beta  & -\alpha ^2     \\
      \beta ^2   & -\alpha \beta
    \end{array}
  \right).$$
  By Eq.(\ref{2.01}), we get
  $$\{z\in{\rm sphere}\,\mathbb{C} ^2:|\langle (x\otimes y)z,z\rangle |=w(x\otimes y)\}=\{z_\xi :=\frac{1}{\sqrt{2}}(x+e^{i\xi }y):\xi\in[0,2\pi)\}.$$ Furthermore, a simple calculation gives
  $$\overline{\langle (x\otimes y)z_\xi,z_\xi\rangle} \langle (u\otimes v)z_\xi,z_\xi\rangle=\frac{1}{4}(\beta ^2e^{-2i\xi }-\alpha ^2) .$$
  Now, by using  the fact
  $$\beta \geq \alpha \Leftrightarrow 0\in{\rm co}\,\{\beta ^2e^{-2i\xi }-\alpha ^2:\xi\in[0,2\pi)\},$$
one  obtains  that
  $$\begin{array}{rl}x\otimes y\perp_wu\otimes v&\Leftrightarrow 0\in{\rm co}\,\{\overline{\langle (x\otimes y)z_\xi,z_\xi\rangle} \langle (u\otimes v)z_\xi,z_\xi\rangle:\xi\in[0,2\pi)\}\\
    &\Leftrightarrow \beta \geq \alpha\Leftrightarrow |\langle u,y\rangle |\geq |\langle u,x\rangle |.\end{array}$$
The proof is finished.
\end{proof}

\begin{prop}\label{prop4.2}
Assume that $A=\left(
  \begin{array}{ccc}
      a_1 & a_2     \\
      a_3   & a_4
    \end{array}
  \right)\in\mathcal{M} _2(\mathbb{C} )$. If $|a_1|=w(A)$, then $|a_2|=|a_3|\leq |a_1|$.
\end{prop}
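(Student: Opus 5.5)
The plan is to exploit the fact that $|a_1|=|\langle Ae_1,e_1\rangle|$ is a maximum of $|\langle Az,z\rangle|$ over the unit sphere, attained at $z=e_1$, and to read off first- and second-order conditions by perturbing $e_1$. First normalize: multiply $A$ by a unimodular scalar so that $a_1=w(A)\ge 0$. If $w(A)=0$ then $A=0$ and the claim is trivial, so assume $a_1>0$. Since then $\mathrm{Re}\,\langle Az,z\rangle\le |\langle Az,z\rangle|\le a_1$ for all unit $z$, the Hermitian part $\mathrm{Re}\,A=\frac12(A+A^*)$ has largest eigenvalue at most $a_1$. But its $(1,1)$ entry equals $a_1$, so $a_1$ is its top eigenvalue with eigenvector $e_1$. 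Hence the off-diagonal entry $\frac12(a_2+\bar a_3)$ vanishes, i.e. $a_3=-\bar a_2$. In particular $|a_2|=|a_3|$.

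Next we need $|a_2|\le a_1$. Here I would use the elementary numerical-range bound $w(A)\ge \frac12\|A-A^{T\ast}\ldots\|$; more concretely, the cleanest route is $w(A)\ge \frac12\|A\|$ combined with a direct test vector. Take $z=\frac{1}{\sqrt2}(e_1+e^{i\phi}e_2)$. Then
\[
\langle Az,z\rangle=\tfrac12\bigl(a_1+a_4+e^{i\phi}a_3+e^{-i\phi}a_2\bigr)=\tfrac12\bigl(a_1+a_4+e^{-i\phi}a_2-e^{i\phi}\bar a_2\bigr).
\]
The term $e^{-i\phi}a_2-\overline{e^{-i\phi}a_2}=2i\,\mathrm{Im}(e^{-i\phi}a_2)$ ranges over $2i t$ with $|t|\le|a_2|$, and equals $\pm 2i|a_2|$ for suitable $\phi$. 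So $|\langle Az,z\rangle|=\frac12|a_1+a_4\pm 2i|a_2||$, and choosing the sign to agree with $\mathrm{Im}(a_4)$ gives
\[
a_1\ge \tfrac12\bigl|(a_1+\mathrm{Re}\,a_4)+i(|\mathrm{Im}\,a_4|+2|a_2|)\bigr|\ge |a_2|+\tfrac12|\mathrm{Im}\,a_4|.
\]
This nearly suffices but does not control the real part. So additionally I would use that $\mathrm{Re}\,a_4\ge -a_1$; this follows from $|a_4|\le w(A)$, via $z=e_2$. Combining the two conditions still does not directly bound $|a_2|$ when $\mathrm{Re}\,a_4$ is close to $-a_1$.

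The main obstacle is exactly this case, and I expect to handle it with a rotated Hermitian-part argument. For every $\theta$, $\mathrm{Re}(e^{-i\theta}A)\le a_1 I$, while the $(1,1)$ entry of $\mathrm{Re}(e^{-i\theta}A)$ is $a_1\cos\theta$. The $2\times2$ condition $a_1I-\mathrm{Re}(e^{-i\theta}A)\ge0$ has determinant
\[
(a_1-a_1\cos\theta)\bigl(a_1-\mathrm{Re}(e^{-i\theta}a_4)\bigr)-\tfrac14\,|e^{-i\theta}a_2+e^{i\theta}\bar a_3|^2\ge 0 .
\]
Using $a_3=-\bar a_2$, the off-diagonal term becomes $|a_2|^2\sin^2\theta$. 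Taking $\theta=\pi$ gives $2a_1\bigl(a_1+\mathrm{Re}\,a_4\bigr)\ge 0$, which is uninformative. Instead take $\theta=\pi/2$, which gives $a_1\bigl(a_1-\mathrm{Im}\,a_4\bigr)\ge |a_2|^2$, and $\theta=-\pi/2$, which gives $a_1\bigl(a_1+\mathrm{Im}\,a_4\bigr)\ge|a_2|^2$. Averaging these two yields $a_1^2\ge |a_2|^2$, i.e. $|a_2|\le a_1=|a_1|$, which completes the proof. This determinant computation is the step to double-check.
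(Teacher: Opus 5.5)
Your proof is correct, but it takes a genuinely different route from the paper. The paper gets $|a_2|=|a_3|$ by quoting an external lemma and then uses the elliptical range theorem. With $W(A)$ an ellipse of semi-axes $a\ge b$ and $(s,t)$ the offset of $a_1$ from its centre, it derives $|a_2|^2=a^2+b^2-s^2-t^2\le a^2$; this tacitly uses that $a_1$ lies on the boundary of the ellipse, so $s^2+t^2\ge b^2$. It then finishes with $a\le w(A)$ via the endpoints of the major axis.

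You use only the supporting half-plane inequalities $\mathrm{Re}(e^{-i\theta}A)\le w(A)I$. At $\theta=0$, the quadratic form of $a_1I-\mathrm{Re}\,A$ vanishes at $e_1$, which forces $a_3=-\bar a_2$. At $\theta=\pm\pi/2$, the determinant condition gives $|a_2|^2\le a_1(a_1\mp\mathrm{Im}\,a_4)$; your determinant computation is right. Your route is self-contained and yields more:
\begin{itemize}
\item the phase relation $a_3=-e^{2i\arg a_1}\bar a_2$ rather than just equal moduli;
\item the sharper bound $|a_2|^2\le|a_1|\bigl(|a_1|-|\mathrm{Im}(e^{-i\arg a_1}a_4)|\bigr)$;
\item since it never uses $\dim=2$, the general fact $\|Ax-\langle Ax,x\rangle x\|\le w(A)$ whenever $|\langle Ax,x\rangle|=w(A)$.
\end{itemize}
That last fact would make the compression step in the proof of Proposition~\ref{prop4.3} unnecessary. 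In exchange, the paper's argument stays inside the ellipse picture of $W(A)$ that it uses elsewhere in Section 3.

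On the write-up, the ``main obstacle'' you describe does not exist. Your test-vector inequality $a_1\ge|a_2|+\tfrac12|\mathrm{Im}\,a_4|$ already gives $|a_2|\le a_1$, because the step $|\zeta|\ge|\mathrm{Im}\,\zeta|$ has already discarded $\mathrm{Re}\,a_4$. Your $\theta=\pm\pi/2$ determinant step is the same imaginary-part test in matrix form, since $\mathrm{Re}(\mp iA)=\pm\frac{1}{2i}(A-A^*)$; it just gives a slightly sharper bound. So you can stop right after the test vector. Delete the garbled sentence invoking $w(A)\ge\frac12\|A\|$.
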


\begin{proof}
  Assume that $|a_1|=w(A)$. Then  $|a_2|=|a_3|$ by \cite[Lemma 1.5.13]{W2}.

  Still, we need to prove $|a_2|\leq |a_1|$.
Note that $W(A)$ is an ellipse with foci $\{\lambda _1,\lambda _2\}$ and a minor axis of the length $(|a_1|^2+|a_2|^2+|a_3|^2+|a_4|^2-|\lambda _1|^2-|\lambda _2|^2)^{\frac{1}{2}}$, where  $\sigma (A)=\{\lambda _1,\lambda _2\}$ (\cite{D}). Let the center of $W(A)$  be $\alpha +i\beta $ with $\alpha ,\beta \in\mathbb{R} $, the major axis length be $2a$ with direction $\theta \in[0,\pi)$, the minor axis length be $2b$, and the focal distance be $2c$.  A simple calculation gives
  $$\{\lambda_1,\lambda_2\}=\{\alpha +c\cos \theta +i(\beta +c\sin \theta ),\alpha -c\cos \theta +i(\beta -c\sin \theta )\}.$$
Using the length of the minor axis, we obtain
\begin{equation}\label{3.01}
  |a_1|^2+|a_2|^2+|a_3|^2+|a_4|^2=2a^2+2b^2+2\alpha ^2+2\beta ^2.
\end{equation}
As $a_1+a_4={\rm tr}\,A=\lambda _1+\lambda _2=2(\alpha +i\beta )$, we may set $a_1=\alpha +s+i(\beta +t)$ and $a_4=\alpha -s+i(\beta -t)$ with $s,t\in\mathbb{R} $. By the fact   $|a_2|=|a_3|$ and Eq.(\ref{3.01}), we infer that $|a_2|^2=a^2+b^2-s^2-t^2$.
This implies
\begin{equation}\label{3.02}
  |a_2|^2\leq a^2+b^2-b^2=a^2.
\end{equation}
In addition, note that  $\mu _1:=\alpha +a\cos \theta +i(\beta +a\sin \theta ),\mu _2:=\alpha -a\cos \theta +i(\beta -a\sin \theta )\in W(A)$. It follows that
$$\sqrt{\alpha ^2+\beta ^2+a^2+2a|\alpha \cos \theta +\beta \sin \theta |} =\max\{|\mu _1|,|\mu _2|\}\leq w(A),$$
and so   $a\leq w(A)$. Hence $|a_2|\leq w(A)=|a_1|$ by Eq.(\ref{3.02}).
\end{proof}

\begin{prop}\label{prop6.2}
 Assume that $A\in\mathcal{M} _2(\mathbb{C} )$. If $|\langle Ax,x\rangle |=w(A)$ for some $x\in{\rm sphere}\,\mathbb{C} ^2$, then
both  $A\perp\!\!\!\!\perp_wx\otimes y$  and $A\perp\!\!\!\!\perp_wy\otimes x$ hold for all $y\in{\rm sphere}\,\mathbb{C} ^2$  with $\langle x,y\rangle=0$.
\end{prop}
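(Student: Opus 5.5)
We would prove the two halves of each mutual orthogonality separately, using the criterion of Proposition \ref{1.1}(3). Throughout we work in the orthonormal basis $\{x,y\}$ of $\mathbb{C}^2$ and write $A=\left(\begin{array}{cc} a_1 & a_2\\ a_3 & a_4\end{array}\right)$. Here $a_1=\langle Ax,x\rangle$, $a_2=\langle Ay,x\rangle$, $a_3=\langle Ax,y\rangle$ and $a_4=\langle Ay,y\rangle$. By hypothesis $|a_1|=w(A)$, so Proposition \ref{prop4.2} gives $|a_2|=|a_3|\leq|a_1|$. The case $A=0$ is trivial, so we assume $A\neq 0$.

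\emph{The direction $A\perp_w x\otimes y$ (and $A\perp_w y\otimes x$).} We take the constant sequence $x_n^\theta=x$ for every $\theta$. Then $|\langle Ax,x\rangle|=w(A)$, and
$$\langle (x\otimes y)x,x\rangle=\langle x,y\rangle=0,\qquad \langle (y\otimes x)x,x\rangle=\langle y,x\rangle\langle x,x\rangle=0 .$$
Hence the quantity $\mathrm{Re}\{e^{-i\theta}\langle Ax,x\rangle\overline{\langle Bx,x\rangle}\}$ in Proposition \ref{1.1}(3) equals $0$ for both $B=x\otimes y$ and $B=y\otimes x$. This gives $A\perp_w x\otimes y$ and $A\perp_w y\otimes x$ at once.

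\emph{The direction $x\otimes y\perp_w A$.} By Eq.(\ref{2.01}), the unit vectors at which $x\otimes y$ attains its numerical radius are exactly $z_\xi=\frac{1}{\sqrt2}(x+e^{i\xi}y)$, and $\langle (x\otimes y)z_\xi,z_\xi\rangle=\frac12 e^{i\xi}$. A direct computation gives
$$\overline{\langle (x\otimes y)z_\xi,z_\xi\rangle}\,\langle Az_\xi,z_\xi\rangle=\tfrac14\, g(e^{-i\xi}),\qquad g(u)=a_3u^2+(a_1+a_4)u+a_2 .$$
Because the attaining set is exactly this circle, Proposition \ref{1.1}(3) reduces, as in Eq.(\ref{2.1}), to showing $0\in{\rm co}\{g(e^{i\theta}):\theta\in[0,2\pi)\}$. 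We split into two cases.
\begin{enumerate}
\item[(a)] If $a_3=0$, then $a_2=0$ as well, so $g(u)=(a_1+a_4)u$ has the root $u=0$ in the open disc.
\item[(b)] If $a_3\neq0$, the product of the roots of $g$ is $a_2/a_3$, which has modulus $1$. So some root $u_0$ satisfies $|u_0|\leq 1$. If $|u_0|<1$, then $N\geq1$ and the argument-principle equivalence Eq.(\ref{2.011}) gives $0$ in the convex hull. If $|u_0|=1$, then $0=g(u_0)$ lies in the set itself.
\end{enumerate}
In both cases $x\otimes y\perp_w A$.

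\emph{The direction $y\otimes x\perp_w A$.} The same attaining set $\{z_\xi\}$ works, now with $\langle (y\otimes x)z_\xi,z_\xi\rangle=\frac12e^{-i\xi}$. The relevant polynomial becomes $a_2u^2+(a_1+a_4)u+a_3$, i.e.\ the roles of $a_2$ and $a_3$ are interchanged. The identical root argument applies because $|a_2|=|a_3|$.

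The main obstacle is the reverse direction $x\otimes y\perp_w A$. The key point there is that Proposition \ref{prop4.2} forces $|a_2|=|a_3|$, which makes the product of the roots unimodular. The only remaining care is the boundary case where a root lies exactly on the unit circle, which Eq.(\ref{2.011}) as stated (counting only roots in the open disc) does not literally cover; we handle it directly as in case (b).
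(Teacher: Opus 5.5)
Your proof is correct and follows essentially the same route as the paper. Proposition~\ref{prop4.2} gives $|a_2|=|a_3|\leq|a_1|$, the reverse orthogonality reduces to $0\in{\rm co}\,\{f(e^{i\xi})\}$ for $f(u)=a_3u^2+(a_1+a_4)u+a_2$, and the unimodular product of roots together with the argument principle (treating a root on the unit circle directly) closes it. Your split on $a_3=0$ versus $a_3\neq0$ is a cleaner form of the paper's normal/non-normal split, since $a_3=0$ forces $a_2=0$, i.e.\ $A$ is diagonal. You also spell out the easy direction $A\perp_w x\otimes y$, which the paper leaves to the reader.
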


\begin{proof}
	
Take any 	$y\in{\rm sphere}\,\mathbb{C} ^2$  with $\langle x,y\rangle=0$ and $z_\delta =\frac{1}{\sqrt 2}(x+e^{i\delta }y)$.  We will complete the proof by considering  two cases.

  {\bf Case 1.} $A$ is normal.

  With respect to the space decomposition $\mathbb{C} ^2=[x,y]$,  $A$ can be written as $A=\left(
  \begin{array}{ccc}
      a_1 & 0     \\
      0   & a_2
    \end{array}
  \right)$ with  $|a_2|\leq |a_1|=w(A)$. It is easy to verify that $A\perp_wx\otimes y$.
In addition, we have
  $$\overline{\langle (x\otimes y)z_\delta ,z_\delta \rangle} \langle Az_\delta ,z_\delta \rangle=\frac{1}{4}e^{-i\delta } (a_1+a_2),$$
  which means $0\in{\rm co}\,\{\overline{\langle (x\otimes y)z_\delta ,z_\delta \rangle } \langle Az_\delta ,z_\delta \rangle:\delta \in[0,2\pi)\}$. It follows from Proposition \ref{1.1} and Eq.(\ref{2.01}) that $x\otimes y\perp_wA$, and thus $A\perp\!\!\!\!\perp_wx\otimes y$.

  Similarly, one can prove that $A\perp\!\!\!\!\perp_wy\otimes x$.

  {\bf Case 2.} $A$ is not normal.

  With respect to the space decomposition $\mathbb{C} ^2=[x,y]$,  $A$ can be written as $A=\left(
  \begin{array}{ccc}
      a_1 & a_2     \\
      a_3 & a_4
    \end{array}
  \right)$.  By Proposition \ref{prop4.2}, one has $|a_2|=|a_3|\leq |a_1|=|\langle Ax,x\rangle |=w(A)$. It is easy to verify that $A\perp_wx\otimes y$. In addition,  we have
  $$\overline{\langle (x\otimes y)z_\delta ,z_\delta \rangle} \langle Az_\delta ,z_\delta \rangle=\frac{1}{4}[a_3e^{-2i\delta }+(a_1+a_4)e^{-i\delta }+a_2].$$

  Let $f(u)=a_3u^2+(a_1+a_4)u+a_2$. To prove $x\otimes y\perp_wA$,  by Proposition \ref{1.1}, it suffices to show
  \begin{equation}\label{6.1}
  0\in{\rm co}\,\{f(e^{i\xi}):\xi\in[0,2\pi)\}.
  \end{equation}
 In fact, if there exists some $\xi\in[0,2\pi)$ such that $f(e^{i\xi})=0$, then Eq.(\ref{6.1}) holds trivially. If there does not exist $\xi\in[0,2\pi)$ such that $f(e^{i\xi})=0$, then let $\{u:f(u)=0\}=\{u_1,u_2\}$. Note that $|u_1u_2|=|\frac{a_2}{a_3}|=1$. Then there exists $i\in\{1,2\}$ such that $|u_i|<1$. Through a discussion similar to that of Eq.(\ref{2.011}), one achieves that Eq.(\ref{6.1}) holds. Hence $A\perp\!\!\!\!\perp_wx\otimes y$.

  Similarly, one can show $A\perp\!\!\!\!\perp_wy\otimes x$. The proof is finished.
\end{proof}

\begin{prop}\label{prop7.1}
If $A=\left(
	\begin{array}{ccc}
		1 & 0     \\
		0 & a
	\end{array}
	\right)\in \mathsf{H} _2$ with $|a|\leq 1$ and $B_\theta =\left(
	\begin{array}{ccc}
		0 & e^{i\theta }     \\
		e^{-i\theta } & 0
	\end{array}
	\right)$ with $\theta \in[0,2\pi)$, then  $A\perp\!\!\!\!\perp_wB_\theta$.
\end{prop}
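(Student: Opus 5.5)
We have to prove both $A\perp_w B_\theta$ and $B_\theta\perp_w A$, and in each direction we use criterion (3) of Proposition \ref{1.1}. Since $A\in\mathsf H_2$, the entry $a$ is real with $-1\le a\le 1$, and $w(A)=1$. Since $B_\theta$ is self-adjoint unitary with eigenvalues $\pm1$, we have $w(B_\theta)=1$. Work in the standard basis $e_1,e_2$ and write $z=(z_1,z_2)$ with $|z_1|^2+|z_2|^2=1$. Then
$$\langle Az,z\rangle=|z_1|^2+a|z_2|^2,\qquad \langle B_\theta z,z\rangle=2{\rm Re}\,(e^{i\theta}z_2\bar z_1).$$

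\textbf{Direction $A\perp_w B_\theta$.} Take $x=e_1$ for every $\theta$. Then $\langle Ae_1,e_1\rangle=1=w(A)$ and $\langle B_\theta e_1,e_1\rangle=0$, so the product in Proposition \ref{1.1}(3) is $0$ for every angle, and the criterion holds trivially. (One could also cite Proposition \ref{prop6.2}, but the direct check is immediate.)

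\textbf{Direction $B_\theta\perp_w A$.} The norming vectors of $B_\theta$ are its eigenvectors $u_\pm=\frac1{\sqrt2}(e_1\pm e^{-i\theta}e_2)$, with $\langle B_\theta u_\pm,u_\pm\rangle=\pm1$. For both, $\langle Au_\pm,u_\pm\rangle=\frac{1+a}{2}$, which is real and $\ge 0$. The two products $\overline{\langle B_\theta u,u\rangle}\langle Au,u\rangle$ are therefore $\pm\frac{1+a}2$. For any angle $\varphi$ (the paper's $\theta$ in Proposition \ref{1.1}(3)), we need one of these with ${\rm Re}\{e^{-i\varphi}\cdot\}\ge0$. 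The quantities $e^{-i\varphi}\frac{1+a}2$ and $-e^{-i\varphi}\frac{1+a}2$ have opposite real parts, so one of them is nonnegative. Choosing the constant sequence $u_+$ or $u_-$ accordingly verifies (3), and hence $B_\theta\perp_w A$.

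The only point needing care is that $W(B_\theta)$ attains $w(B_\theta)$ at two antipodal values $\pm1$. This is exactly what makes the sign choice always possible, and it works even in the degenerate case $a=-1$, where both products vanish. Combining the two directions gives $A\perp\!\!\!\!\perp_w B_\theta$. No step here is a serious obstacle; the argument is a direct check.
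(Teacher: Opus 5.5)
Your proposal is correct and follows essentially the same route as the paper: $e_1$ witnesses $A\perp_w B_\theta$. For the converse, the two eigenvectors of $B_\theta$ give the products $\pm\frac{1+a}{2}$, and choosing the sign for each angle is the same fact the paper phrases as $0\in{\rm co}\,\{\frac{1+a}{2},-\frac{1+a}{2}\}$, which yields $B_\theta\perp_w A$.
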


\begin{proof}
On the one hand, by taking  $x=\left(
	\begin{array}{ccc}
		1    \\
		0
	\end{array}
	\right)$, one has
	$$|\langle Ax,x\rangle |=w(A) {\rm \ and \ }|\overline{\langle Ax,x\rangle}\langle B_\theta x,x\rangle |=0,$$
	which means $A\perp_wB_\theta $.
	
	On the other hand, by taking  $x_1=\frac{1}{\sqrt{2}}\left(
	\begin{array}{ccc}
		e^{i\theta }    \\
		1
	\end{array}
	\right)$ and $x_2=\frac{1}{\sqrt{2}}\left(
	\begin{array}{ccc}
		e^{i\theta }    \\
		-1
	\end{array}
	\right)$, a direct calculation gives
	$$|\langle B_\theta x_1,x_1\rangle |=|\langle B_\theta x_2,x_2\rangle |=w(B)$$
	and
	$$0\in{\rm co}\,\{\overline{\langle B_\theta x_1,x_1\rangle} \langle A x_1,x_1\rangle , \overline{\langle B_\theta x_2,x_2\rangle} \langle A x_2,x_2\rangle \}={\rm co}\,\{\frac{1+a}{2},-\frac{1+a}{2}\}.$$
	These imply  $B_\theta\perp_wA $. Hence $A\perp\!\!\!\!\perp_wB_\theta $.
\end{proof}

Now, we are at the position to give a proof of Theorem \ref{prop6.3}.

\begin{proof}[Proof of Theorem \ref{prop6.3}]
  Firstly, for any $A,B\in\mathcal{M}_2(\mathbb{C})$, there exist unit vectors $x,u\in H$ such that $|\langle Ax,x\rangle |=w(A)$ and $|\langle Bu,u\rangle |=w(B)$. It follows from Proposition \ref{prop6.2} that
  $$A\perp\!\!\!\!\perp_wx\otimes y, \ A\perp\!\!\!\!\perp_wy\otimes x, \ B\perp\!\!\!\!\perp_wu\otimes v {\rm \ and \ }B\perp\!\!\!\!\perp_wv\otimes u,$$
  where $y,v\in{\rm sphere}\,\mathbb{C} ^2$ with $\langle x,y\rangle =\langle u,v\rangle=0 $. By Proposition \ref{prop6.1},  if $|\langle x,u\rangle |\leq |\langle y,u\rangle |$, we have
  $$A\perp\!\!\!\!\perp_wx\otimes y\perp\!\!\!\!\perp_wu\otimes v\perp\!\!\!\!\perp_wB;$$
if $|\langle x,u\rangle |> |\langle y,u\rangle |$,    one has
  $$A\perp\!\!\!\!\perp_wy\otimes x\perp\!\!\!\!\perp_wu\otimes v\perp\!\!\!\!\perp_wB.$$
Hence ${\rm Diam}\,\varGamma (\mathcal{M}_2(\mathbb{C}), \perp\!\!\!\!\perp_w)\leq3$.

 So, to prove ${\rm Diam}\,\varGamma (\mathcal{M}_2(\mathbb{C}), \perp\!\!\!\!\perp_w)=3$, it suffices to check that   there exist $A,B\in\mathcal{M}_2(\mathbb{C}) $ such that the distance between them is greater than $2$.
To do this, let $A=\left(
  \begin{array}{ccc}
      5 & i     \\
      i & 1
    \end{array}
  \right)$ and $B=\left(
  \begin{array}{ccc}
      2 & 1     \\
      0 & 4
    \end{array}
  \right)$.
A simple calculation yields that
 $W(A)$ is an ellipse with the center $3$, the major axis length $4$, and the  minor axis length $2$. So $0\not\in W(A)$, $w(A)=5$ and $$\{x\in{\rm sphere}\,\mathbb{C} ^2:|\langle Ax,x\rangle |=w(A)\}=\{e^{i\delta }x_0:x_0 =\left(
  \begin{array}{ccc}
      1    \\
       0
    \end{array}
  \right),\delta \in[0,2\pi)\}.$$ It follows that
  $\overline{\langle Ax_0 ,x_0 \rangle}\langle Bx_0 ,x_0\rangle=10,$
  which means
\begin{equation}\label{claim1}
	A\not\perp_wB.
	\end{equation}

  {\bf Claim.} If $A\perp\!\!\!\!\perp_wC\perp\!\!\!\!\perp_wB$ for some $C\in\mathcal{M} _2(\mathbb{C} )$, then  $C=0$.

  Write $C=\left(
  \begin{array}{ccc}
      c_1 & c_2     \\
      c_3 & c_4
    \end{array}
  \right)$. As $A\perp_wC$, we must have $c_1=0$. Suppose on the contrary that $C\neq 0$.

  {\bf Case 1.} The center of $W(C)$ is not at the origin.

  In this case, there exists some $x_0\in {\rm sphere}\,\mathbb{C} ^2$  such that $|\langle Cx_0,x_0\rangle |=w(C)$ and
  $$\{x\in{\rm sphere}\,\mathbb{C} ^2:|\langle Cx,x\rangle |=w(C)\}=\{e^{i\delta }x_0:\delta \in[0,2\pi)\}.$$ As $C\perp_wA$, we have $\langle Ax_0,x_0\rangle =0$ by Proposition \ref{1.1}, a contradiction  to $0\notin W(A)$.

  {\bf Case 2.} The center of $W(C)$ is at the origin, but $W(C)$ is not a circle.

  In this case, we must have $c_4=0$. Thus, $C$ can be written as $C=\left(
  \begin{array}{ccc}
      0 & c_2     \\
      c_3e^{i\theta } & 0
    \end{array}
  \right)$ with $c_2,c_3>0$ and $\theta \in[0,2\pi)$, which yields $w(C)=\frac{c_2+c_3}{2}$. Note  that
  $$\{x\in{\rm sphere}\,\mathbb{C} ^2:|\langle Cx,x\rangle |=w(C)\}=\{e^{i\xi }x_1:\xi \in[0,2\pi)\}\cup \{e^{i\zeta  }x_2:\zeta \in[0,2\pi)\}$$
  with $x_1=\frac{1}{\sqrt{2}}\left(
  \begin{array}{ccc}
      1    \\
       e^{\frac{i\theta }{2}}
    \end{array}
  \right)$ and $x_2=\frac{1}{\sqrt{2}}\left(
  \begin{array}{ccc}
      1    \\
       -e^{\frac{i\theta }{2}}
    \end{array}
  \right)$.

  For $A$, one has
  $$\langle Ax_1,x_1\rangle \overline{\langle Cx_1,x_1\rangle}=\frac{1}{2} e^{-\frac{i\theta }{2}}(c_2+c_3)(3+i\cos \frac{\theta }{2})$$
  and
  $$\langle Ax_2,x_2\rangle \overline{\langle Cx_2,x_2\rangle}=-\frac{1}{2} e^{-\frac{i\theta }{2}}(c_2+c_3)(3-i\cos \frac{\theta }{2}).$$
As $C\perp_wA$, we infer that $0\in{\rm co}\,\{\langle Ax_1,x_1\rangle \overline{\langle Cx_1,x_1\rangle},\langle Ax_2,x_2\rangle \overline{\langle Cx_2,x_2\rangle}\}$, which yields $\theta =\pi$. Thus,
for $B$, a calculation gives
  $$\langle Bx_1,x_1\rangle \overline{\langle Cx_1,x_1\rangle}=\frac{1}{4} e^{-\frac{i\pi }{2}}(c_2+c_3)(6+i)$$
  and
  $$\langle Bx_2,x_2\rangle \overline{\langle Cx_2,x_2\rangle}=-\frac{1}{4} e^{-\frac{i\pi }{2}}(c_2+c_3)(6-i),$$
  which means $0\notin{\rm co}\,\{\langle Bx_1,x_1\rangle \overline{\langle Cx_1,x_1\rangle},\langle Bx_2,x_2\rangle \overline{\langle Cx_2,x_2\rangle}\}$,   contradicting to $C\perp_wB$.

  {\bf Case 3.} $W(C)$ is  a  disc centered at the origin.

  At this point, up to a nonzero scalar multiple, either $C=\left(
  \begin{array}{ccc}
      0 & 1     \\
      0 & 0
    \end{array}
  \right)$ or $C=\left(
  \begin{array}{ccc}
      0 & 0     \\
      1 & 0
    \end{array}
  \right)$. 
Note that the center of $W(B)$ is not at the origin. As $B\perp_wC$, there exists a unit vector $x\in{\rm sphere}\,\mathbb{C} ^2$ such that
  $$|\langle Bx,x\rangle |=w(B){\rm \ and \ }\langle Cx,x\rangle =0,$$
  which implies $x=\left(
  \begin{array}{ccc}
      1    \\
      0
    \end{array}
  \right)$ or $\left(
  \begin{array}{ccc}
      0    \\
      1
    \end{array}
  \right)$. This is impossible.

Therefor, the claim is true.

Now, by Claim,  it is clear that ${\rm Diam}\,\varGamma (\mathcal{M}_2(\mathbb{C}), \perp\!\!\!\!\perp_w)=3$.

Next, let us check that  ${\rm Diam}\,\varGamma (\mathsf{H}_2, \perp\!\!\!\!\perp_w)=2$.

 For any $A,B\in\mathsf{H} _2$, write
  $$A=a_1x_1\otimes x_1+a_2x_2\otimes x_2\ \ {\rm  and}\ \ B=b_1y_1\otimes y_1+b_2y_2\otimes y_2,$$ where $x_1,x_2,y_1,y_2\in{\rm sphere}\,\mathbb{C} ^2$, $\langle x_1,x_2\rangle=\langle y_1,y_2\rangle=0 $, $a_1,a_2,b_1,b_2\in\mathbb{R} $, $|a_1|=w(A)$ and $|b_1|=w(B)$. With respect to the space decomposition $\mathbb{C} ^2=[x_1,x_2]$, take $C_\theta =\left(
  \begin{array}{ccc}
      0 & e^{i\theta }     \\
      e^{-i\theta } & 0
    \end{array}
  \right)$ with any $\theta \in[0,2\pi)$. By Proposition \ref{prop7.1}, one gets $A\perp\!\!\!\!\perp_wC_\theta $.

Write  $y_1=\alpha x_1+\beta x_2$ with $\alpha ,\beta \in\mathbb{C} $. Choose an appropriate $\theta _0\in[0,2\pi)$ such that $\langle C_{\theta_0}y_1,y_1 \rangle =2{\rm Re}\,e^{i\theta _0}\bar\alpha \beta =0$. Then under the space decomposition $\mathbb{C} ^2=[y_1,y_2]$, we have $B=\left(
  \begin{array}{ccc}
      b_1 & 0     \\
      0 & b_2
    \end{array}
  \right)$ and $C_{\theta_0}=\left(
  \begin{array}{ccc}
      0 & \mu      \\
      \bar\mu  & 0
    \end{array}
  \right)$ with $\mu \in\mathbb{C} $. This and Proposition \ref{prop7.1} imply that  $B\perp\!\!\!\!\perp_wC_{\theta_0} $.

Thus, we have proved that
  $A\perp\!\!\!\!\perp_wC_{\theta_0}\perp\!\!\!\!\perp_wB,$
  that is, ${\rm Diam}\,\varGamma (\mathsf{H} _2, \perp\!\!\!\!\perp_w)\leq 2$.
However, for $S=\left(
  \begin{array}{ccc}
      1 & 0     \\
      0 & \frac{1}{2}
    \end{array}
  \right)$ and $T=\left(
  \begin{array}{ccc}
      1 & 0     \\
      0 & \frac{1}{3}
    \end{array}
  \right)$, clearly, $S\not\perp_wT$. Hence ${\rm Diam}\,\varGamma (\mathsf{H} _2, \perp\!\!\!\!\perp_w)=2$.

  This completes the proof.
\end{proof}

\section{the strong numerical radius orthograph $\varGamma (\mathcal{M} _n(\mathbb{C} ), \perp\!\!\!\!\perp^s_w)$}

In this section, we discuss the strong numerical radius orthograph $\varGamma (\mathcal{M} _n(\mathbb{C} ), \perp\!\!\!\!\perp^s_w)$.

 The following result says that the set NI$(\mathcal M_n(\mathbb C))$ of all singular matrices is connected in $\varGamma (\mathcal{M} _n(\mathbb{C} ), \perp\!\!\!\!\perp^s_w)$ with $n\geq 3$.

\begin{prop}\label{prop4.3}
Assume that  $n\geq 3$ and $A\in{\rm NI}(\mathcal{M} _n(\mathbb{C} ))$ is a nonzero operator. Then there exist mutually orthogonal vectors $x,y\in{\rm sphere}\,\mathbb{C} ^n$ such that $x\otimes y\perp\!\!\!\!\perp^s_wA$.
\end{prop}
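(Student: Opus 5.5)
The plan is to choose $x$ in $\ker A^*$, or in a suitable plane through $\ker A^*$, so that $x\otimes y\perp^s_wA$ follows from Proposition \ref{prop4.1}. The reverse relation $A\perp^s_w x\otimes y$ will follow from Proposition \ref{prop2.0}. With the convention of the paper, $(x\otimes y)z=\langle z,y\rangle x$, so $(x\otimes y)^*z=\langle z,x\rangle y$. Hence, by Proposition \ref{prop2.0} (finite dimension, so $w(A)$ is attained), it suffices to find a unit vector $z_0$ with
\[
|\langle Az_0,z_0\rangle|=w(A)\quad\text{and}\quad z_0\perp x .
\]
By Proposition \ref{prop4.1}, the other direction needs $A^*x,A^*y$ linearly dependent with $\|A^*y\|\ge\|A^*x\|$. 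So the task is to find orthonormal $x,y$ meeting both requirements.

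\emph{Step 1 (choice of the maximizing vector).} Fix a unit $w\in\ker A^*$, which exists since $A$ is singular. Take any $z_1$ with $|\langle Az_1,z_1\rangle|=w(A)$ and write $z_1=\alpha w+\beta v$ with $v\perp w$ a unit vector. Since $\langle Az,w\rangle=\langle z,A^*w\rangle=0$ for every $z$, the compression of $A$ to ${\rm span}\{w,v\}$ has the form $\left(\begin{array}{cc}0&0\\ b&d\end{array}\right)$. For $z=\alpha w+\beta v$ this gives $\langle Az,z\rangle=\bar\beta(\alpha b+\beta d)$. Now replace $(|\alpha|,|\beta|)$ by $(|\beta|,|\alpha|)$ while keeping the phases. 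The cross term $\bar\beta\alpha b$ keeps its modulus and phase, and the term $|\beta|^2d$ changes only in size. Hence, if $|\beta|<|\alpha|$, the swapped vector $z_0$ satisfies $|\langle Az_0,z_0\rangle|\ge|\langle Az_1,z_1\rangle|=w(A)$, so it is again a maximizer. I therefore expect to obtain a maximizer $z_0$ with $|\langle w,z_0\rangle|^2\le \frac12$. I expect this step to be the main obstacle: the triangle-inequality comparison must be checked carefully with the phases. Note also that $w\not\parallel z_0$, since otherwise $\langle Az_0,z_0\rangle=0$ and $A=0$.

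\emph{Step 2 (construction of $x,y$).} Let $V=z_0^{\perp}$, which has $\dim V=n-1\ge2$. Take
\[
x=\frac{P_{[V]}w}{\|P_{[V]}w\|},\qquad\text{so that}\qquad |\langle w,x\rangle|^2=1-|\langle w,z_0\rangle|^2\ge \tfrac12 .
\]
If $w\in V$, then simply $x=w$, and $y$ is any unit vector orthogonal to $x$. Otherwise, let $y$ be the normalization of $w-\langle w,x\rangle x$; this vector is orthogonal to $x$. Since $A^*w=0$, we get
\[
A^*y=-\frac{\langle w,x\rangle}{\|w-\langle w,x\rangle x\|}A^*x,
\]
and the coefficient has modulus $|\langle w,x\rangle|/\sqrt{1-|\langle w,x\rangle|^2}\ge1$. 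Thus $A^*x,A^*y$ are dependent and $\|A^*y\|\ge\|A^*x\|$. Proposition \ref{prop4.1} then gives $x\otimes y\perp^s_wA$.

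\emph{Step 3 (the reverse relation).} Since $x\in V$, we have $(x\otimes y)^*z_0=\langle z_0,x\rangle y=0$. Proposition \ref{prop2.0}, applied to the constant sequence $z_0$, yields $A\perp^s_wx\otimes y$. Together with Step 2 this gives $x\otimes y\perp\!\!\!\!\perp^s_wA$. The hypothesis $n\ge3$ is used only through Proposition \ref{prop4.1}; Step 1 needs only $\dim V\ge 1$.
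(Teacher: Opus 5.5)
\textbf{Step 1 has a genuine gap, and it is the one non-formal part of the proof.} Your Steps 2 and 3 are correct and are in fact the paper's construction. When $\langle w,z_0\rangle\neq0$ you have $w-\langle w,x\rangle x=w-P_{[V]}w=\langle w,z_0\rangle z_0$. So your $y$ is a unimodular multiple of the maximizer $z_0$, and $x$ is the normalized component of the kernel vector orthogonal to it, exactly as in the paper.

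The swap argument in Step 1 does not work as written. Put $c=\alpha\bar\beta b$ and use $|\alpha|^2+|\beta|^2=1$. Then
\[
\bigl|c+|\alpha|^2d\bigr|^2-\bigl|c+|\beta|^2d\bigr|^2=(|\alpha|^2-|\beta|^2)\bigl(|d|^2+2\,{\rm Re}(\bar c d)\bigr),
\]
which is negative whenever ${\rm Re}(\bar cd)<-|d|^2/2$. So ``same cross term, larger coefficient of $d$'' does not give $|\langle Az_0,z_0\rangle|\ge w(A)$. The maximality of $z_1$ must enter somewhere, and you do not say how.

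The missing idea is Proposition \ref{prop4.2}. It shows that the case $|\beta|<|\alpha|$ never occurs, so no swap is needed. Take any maximizer $z$ and write $Az=\lambda z+\mu u$ with $u\perp z$ a unit vector. The compression of $A$ to $[z,u]$ has numerical range inside $W(A)$ and containing $\lambda$, so its numerical radius is $|\lambda|=w(A)>0$. Proposition \ref{prop4.2} then gives $|\mu|\le|\lambda|$. Since $A^*w=0$, we have $0=\langle Az,w\rangle=\lambda\langle z,w\rangle+\mu\langle u,w\rangle$. This forces $|\langle w,z\rangle|\le|\langle w,u\rangle|$, hence $|\langle w,z\rangle|^2\le\frac12$ for every maximizer $z$.

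The same fix works in your own coordinates. In the basis $\{z_1,z_1^\perp\}$ of ${\rm span}\{w,v\}$, the compression sends $z_1$ to $(\alpha b+\beta d)(\bar\beta z_1+\alpha z_1^\perp)$. Proposition \ref{prop4.2} then yields $|\alpha|\le|\beta|$ directly, using $\alpha b+\beta d\neq0$ because $A\neq0$. With this inserted in place of the swap, your Steps 2--3 complete the proof.
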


\begin{proof}
Assume that  $0\not=A\in{\rm NI}(\mathcal{M} _n(\mathbb{C} ))$.  Then there exist vectors $y,v\in{\rm sphere}\,\mathbb{C} ^n$ such that $|\langle Ay,y\rangle |=w(A)$ and $A^*v=0$. Let $Ay=\alpha y+\beta u $ and $v=ay+bu +cw$, where $\{y,u ,w\}$ are mutually orthogonal unit vectors  in $\mathbb C^n$ and $\alpha ,\beta ,a,b,c\in\mathbb{C} $. It follows that
  \begin{equation}\label{4.1}
  0=\langle y,A^*v\rangle =\langle Ay,v\rangle =\langle \alpha y+\beta u, ay+bu+cw\rangle =\alpha \bar a+\beta \bar b.
  \end{equation}
  Under the space decomposition $\mathbb{C} ^n=[y,u ]\oplus[y,u ]^\bot $, we have
  $A=\left(
  \begin{array}{ccc}
      \alpha  & a_{12}  & A_{13}   \\
      \beta    & a_{22}   & A_{23} \\
      0   & a_{32}   & A_{33}
    \end{array}
  \right)$. Write $A'=\left(
  \begin{array}{ccc}
      \alpha  & a_{12}  \\
      \beta    & a_{22}
    \end{array}
  \right)$,  and then   $W(A')\subseteq W(A)$ by \cite[Problems 8.21]{W2}. So $$|\alpha |\leq w(A')\leq w(A)=|\alpha |.$$ It follows that  $|\alpha |=w(A')$. By Proposition \ref{prop4.2}, one gets $|\beta |\leq |\alpha |$, which and  Eq.(\ref{4.1}) imply $|a|\leq |b|$. Obviously,
 \begin{equation}\label{4.2}
  |a|\leq \frac{1}{\sqrt{2}}.
  \end{equation}

 Take $x=\frac{1}{\| bu +cw\| }(bu +cw)$. Then $\langle x,y\rangle =0$ and $v=ay+\| bu +cw\|x$. Note that $A^*v=aA^*y+\| bu+cw\|A^*x=0$. So,  $|a|\|A^*y \|=\| bu +cw\|\|A^*x\| $. Combining it with Eq.(\ref{4.2}), we obtain $\|A^*y \|\geq \|A^*x \| $. Now, by Proposition \ref{prop4.1}, one has $x\otimes y\perp^s_w A$; and  by  Proposition \ref{prop2.0}, one gets  $A\perp^s_wx\otimes y$.  Hence $x\otimes y\perp\!\!\!\!\perp^s_wA$.
\end{proof}

By the above proposition, we can characterize the connected components of the graph $\varGamma (\mathcal{M} _n(\mathbb{C} )), \perp\!\!\!\!\perp^s_w)$ and give a upper bound of Diam\,$\varGamma ({\rm NI}(\mathcal{M} _n(\mathbb{C} )), \perp\!\!\!\!\perp^s_w)$    with $n\geq 3$.

 Recall that the general linear group, denote by GL$(n,\mathbb C)$, is the group of all invertible matrices in $\mathcal M_n(\mathbb C)$. Thus we have GL$(n,\mathbb C)=\mathcal{M} _n(\mathbb{C} )\backslash {\rm NI}(\mathcal{M} _n(\mathbb{C} ))$.

\begin{thm}\label{prop4.5}
Assume that  $n\geq 3$. A connected component of  $\varGamma (\mathcal{M} _n(\mathbb{C} ), \perp\!\!\!\!\perp^s_w)$ is either an isolated vertice or  $\varGamma ({\rm NI}(\mathcal{M} _n(\mathbb{C} )),\perp\!\!\!\!\perp^s_w)$. Moreover, 
$$\{A\in\mathcal{M} _n(\mathbb{C} ): A {\rm \ is \ an \ isolated \ vertex }\}={\rm GL}(n,\mathbb C).$$
\end{thm}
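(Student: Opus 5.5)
The plan has two parts. First, every invertible matrix is an isolated vertex. Second, any two nonzero singular matrices are joined by a path inside $\varGamma({\rm NI}(\mathcal M_n(\mathbb C)),\perp\!\!\!\!\perp^s_w)$, and no edge joins a singular vertex to an invertible one. Together these show that the connected components are exactly the singletons $\{A\}$ with $A\in{\rm GL}(n,\mathbb C)$, together with the single component ${\rm NI}(\mathcal M_n(\mathbb C))$.

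\emph{Invertible matrices are isolated.} Let $B\in{\rm GL}(n,\mathbb C)$, so $B$ is right invertible. By Proposition \ref{prop2.3}, $A\perp^s_wB$ forces $A=0$. Hence no nonzero $A$ satisfies $A\perp\!\!\!\!\perp^s_wB$, and $B$ is isolated. The same fact also rules out any edge between an invertible $B$ and a singular $A\neq 0$.

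\emph{Singular matrices are not isolated, and they form one connected component.} Let $0\neq A\in{\rm NI}(\mathcal M_n(\mathbb C))$. Proposition \ref{prop4.3} gives orthonormal $x,y$ with $x\otimes y\perp\!\!\!\!\perp^s_wA$. Since $x\otimes y$ is itself a nonzero singular matrix, $A$ is not isolated, so the isolated vertices are exactly ${\rm GL}(n,\mathbb C)$.

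For connectivity, it suffices to connect any two rank-one nilpotents $x\otimes y$ and $u\otimes v$ (with $\langle x,y\rangle=\langle u,v\rangle=0$). Every singular vertex is adjacent to such a nilpotent, so chaining through these nilpotents connects everything. I would use Proposition \ref{prop4.1} together with Proposition \ref{prop2.0} in the following way. Take a unit vector $w$ orthogonal to both $x$ and $y$, which exists since $n\geq3$, and consider the nilpotent $x\otimes w$.

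\begin{itemize}
\item $x\otimes w\perp^s_w x\otimes y$: this requires $(x\otimes y)^*x=\langle x,x\rangle y=y$ and $(x\otimes y)^*w=0$ to be dependent with $\|(x\otimes y)^*w\|\geq\|(x\otimes y)^*x\|$. The last inequality reads $0\geq 1$, which fails, so this link is not available.
\item The simpler route is Proposition \ref{prop2.0} directly. The maximizing vectors of $x\otimes y$ are $\frac1{\sqrt2}(x+e^{i\delta}y)$, so $x\otimes y\perp^s_w B$ holds whenever $B^*$ kills some such vector. In particular, $B^*x=B^*y=0$ suffices, i.e.\ ${\rm ran}\,B\perp\{x,y\}$. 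Symmetrically, $B\perp^s_w x\otimes y$ holds if $y$ and $x$ both annihilate the relevant maximizing vectors of $B$.
\end{itemize}

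Guided by the second observation, I would choose a nilpotent $C=p\otimes q$ with $p,q\perp\{x,y\}$, which gives $x\otimes y\perp^s_wC$ and $C\perp^s_w x\otimes y$ by Proposition \ref{prop2.0}. This needs $n\geq4$. For $n=3$, I would instead route the path through an intermediate singular $A$ built from Proposition \ref{prop4.3}, for instance a rank-one orthogonal projection or another $x'\otimes y'$ sharing a vector with both endpoints, checking the criteria of Proposition \ref{prop4.1} at each link.

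\emph{Expected obstacle.} The main obstacle is this last step. One must produce an explicit chain between two arbitrary rank-one nilpotents, especially in dimension $3$, with every link verified through Proposition \ref{prop4.1}'s condition (dependence of $A^*x,A^*y$ and $\|A^*y\|\geq\|A^*x\|$) in both directions. My first attempt would be to pass through a common vector: choose a unit vector $z\in\{x,y\}^\perp$ and a unit vector $z'\in\{u,v\}^\perp$, then look for a nilpotent built from $z$ and $z'$, or a projection $z\otimes z$, that is mutually orthogonal to both $x\otimes y$ and $u\otimes v$ via Proposition \ref{prop2.0}.
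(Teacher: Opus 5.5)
Your handling of the isolated vertices is correct and matches the paper. Proposition \ref{prop2.3} shows an invertible matrix has no nonzero neighbour, and Proposition \ref{prop4.3} shows every nonzero singular matrix is adjacent to a rank-one nilpotent. So your reduction to connecting two arbitrary rank-one nilpotents $x\otimes y$ and $u\otimes v$ is right. The gap is that you never carry this step out, and it is the heart of the theorem: that ${\rm NI}(\mathcal M_n(\mathbb C))$ forms one component. Your $n\ge 4$ construction $C=p\otimes q$ with $p,q\perp\{x,y\}$ only shows that $x\otimes y$ has \emph{some} neighbour. It does not join $x\otimes y$ to $u\otimes v$; for the same $C$ to serve both endpoints you would need $p,q\perp\{x,y,u,v\}$, which requires $n\ge 6$ in general. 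For $n=3$ you only list candidates without verifying a single link.

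The missing idea is to use the sharp criterion of Proposition \ref{prop4.1} instead of the sufficient condition of Proposition \ref{prop2.0}. Relying on Proposition \ref{prop2.0} forces you to ask for $B^*x=B^*y=0$, i.e.\ orthogonality to both $x$ and $y$, which is too restrictive. Since $(u\otimes v)^*x=\langle x,u\rangle v$ and $(u\otimes v)^*y=\langle y,u\rangle v$ are always dependent, Proposition \ref{prop4.1} gives
\[
x\otimes y\perp^s_w u\otimes v \iff |\langle u,y\rangle|\ge|\langle u,x\rangle| .
\]
In particular this holds as soon as $u\perp x$, even when no maximizing vector of $x\otimes y$ is annihilated by $(u\otimes v)^*$. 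This is the case $A^*x=0$, $f_B(0)=0$ in the proof of Proposition \ref{prop4.1}, which Proposition \ref{prop2.0} cannot detect. Consequently $x\otimes y\perp\!\!\!\!\perp^s_w e_1\otimes e_2$ whenever $e_1\perp x$ and $e_2\perp e_1$, because $(e_1\otimes e_2)^*x=0$ and $(x\otimes y)^*e_1=0$. Since $n\ge 3$, pick a unit vector $e_1\in[x,u]^\perp$ and any unit $e_2\perp e_1$. This gives the path $x\otimes y\perp\!\!\!\!\perp^s_w e_1\otimes e_2\perp\!\!\!\!\perp^s_w u\otimes v$ uniformly for all $n\ge 3$, with no separate treatment of $n=3$. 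This is exactly the chain the paper relies on (compare the proof of Theorem \ref{prop4.4}).
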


\begin{proof}
 The connectivity of $\varGamma ({\rm NI}(\mathcal{M} _n(\mathbb{C} )),\perp\!\!\!\!\perp^s_w)$ is clear by  Theorem \ref{prop4.3} and {\color{red}  Proposition \ref{prop4.1}.}

For any invertible $A\in\mathcal{M} _n(\mathbb{C} )$, if $A\perp\!\!\!\!\perp^s_w B$ for some $B\in\mathcal{M} _n(\mathbb{C} )$, then  $B\perp^s_w A$. This and Proposition \ref{prop2.3} imply $B=0$. So $A$ is isolated. \end{proof}

\if false Finally, for  any $A\in{\rm NI}(\mathcal{M} _n(\mathbb{C} ))$,  by Proposition \ref{prop4.3},  there exists some $B\in{\rm NI}(\mathcal{M} _n(\mathbb{C} ))$ such that $A\perp\!\!\!\!\perp^s_w B$. So $A$ is not  an  isolated vertex.

As desired.\fi  

\begin{thm}\label{prop4.4}
${\rm Diam}\, \varGamma ({\rm NI}(\mathcal{M} _n(\mathbb{C} )), \perp\!\!\!\!\perp^s_w)\leq4$ for  $n\geq 3$.
\end{thm}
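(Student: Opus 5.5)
The plan is to route every pair of nonzero singular matrices through rank-one nilpotent matrices. Take nonzero $A,B\in{\rm NI}(\mathcal{M}_n(\mathbb{C}))$. By Proposition \ref{prop4.3} there are orthonormal pairs $x,y$ and $u,v$ with
$$A\perp\!\!\!\!\perp^s_w x\otimes y \quad\text{and}\quad B\perp\!\!\!\!\perp^s_w u\otimes v .$$
It then suffices to find a single rank-one nilpotent $C=s\otimes t$, with $s,t\in{\rm sphere}\,\mathbb{C}^n$ and $\langle s,t\rangle=0$, such that
$$x\otimes y\perp\!\!\!\!\perp^s_w C\perp\!\!\!\!\perp^s_w u\otimes v .$$
This gives a walk $A,\;x\otimes y,\;C,\;u\otimes v,\;B$ of length $4$. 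If some of these vertices coincide, the walk shortens to a path of length at most $4$, so the distance is at most $4$.

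The key computation is that Proposition \ref{prop4.1} becomes very explicit when both matrices are rank-one nilpotents. We use the convention $(s\otimes t)w=\langle w,t\rangle s$, so $(s\otimes t)^*w=\langle w,s\rangle t$. Then $C^*x$ and $C^*y$ are both multiples of $t$, hence automatically linearly dependent. Condition (2) of Proposition \ref{prop4.1} therefore reduces to the scalar inequality $|\langle y,s\rangle|\ge|\langle x,s\rangle|$, and this characterizes $x\otimes y\perp^s_w s\otimes t$.

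Symmetrically, $(x\otimes y)^*w=\langle w,x\rangle y$. Applying Proposition \ref{prop4.1} with the roles exchanged, $s\otimes t\perp^s_w x\otimes y$ holds if and only if $|\langle t,x\rangle|\ge|\langle s,x\rangle|$. The same two inequalities, with $u,v$ in place of $x,y$, govern the adjacency of $C$ and $u\otimes v$.

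Now the choice of $s$ is forced to be easy. Since $n\ge 3$, there is a unit vector $s$ orthogonal to both $x$ and $u$. For this $s$ we have $\langle s,x\rangle=\langle s,u\rangle=0$. Hence every required inequality has right-hand side $0$ and holds trivially, whatever unit $t\perp s$ we pick. So $C=s\otimes t$ is mutually strongly orthogonal to both $x\otimes y$ and $u\otimes v$, and it is nonzero and singular.

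The main point needing care is the bookkeeping of the tensor convention in Proposition \ref{prop4.1}: which of $x,y$ appears on the larger side of the inequality, and how adjoints of rank-one operators act. One must check that the inequalities really reduce to conditions involving only $\langle s,x\rangle$ and $\langle s,u\rangle$. If the convention turns out reversed, the same argument works with $s$ chosen orthogonal to $y$ and $v$ instead, which is again possible because $n\ge3$. The remaining degenerate cases are handled by shortening the walk as described above:

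- $A$ or $B$ is itself a rank-one nilpotent;
- $x\otimes y$ and $u\otimes v$ coincide as vertices;
- $C$ coincides with one of the intermediate vertices.
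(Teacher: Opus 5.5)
Your proposal is correct and is essentially the paper's own proof. You attach $A$ and $B$ to rank-one nilpotents $x\otimes y$ and $u\otimes v$ via Proposition~\ref{prop4.3}, then join these through $e_1\otimes e_2$ with $e_1\in[x,u]^\perp$ and $e_2\perp e_1$, with the adjacencies following from Proposition~\ref{prop4.1}. Your explicit reduction of that criterion to the inequalities $|\langle y,s\rangle|\ge|\langle x,s\rangle|$ and $|\langle t,x\rangle|\ge|\langle s,x\rangle|$ (under the paper's convention $(s\otimes t)w=\langle w,t\rangle s$), and your remark on shortening walks with repeated vertices, just spell out what the paper leaves implicit.
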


\begin{proof}
Take any $A_1,A_2\in{\rm NI}(\mathcal{M} _n(\mathbb{C} ))$. By Proposition \ref{prop4.3},  there exist $x,y,u,v\in {\rm sphere}\,\mathbb{C} ^n$ with $\langle x,y\rangle =\langle u,v\rangle =0$ such that $x\otimes y\perp\!\!\!\!\perp^s_wA_1$ and $u\otimes v\perp\!\!\!\!\perp^s_wA_2$. Take any unit vectors $e_1\in[x,u]^\perp $ and $e_2\in[e_1]^\perp $. By Proposition \ref{prop4.1},  one achieves
  $$A_1\perp\!\!\!\!\perp^s_w x\otimes y\perp\!\!\!\!\perp^s_w e_1\otimes e_2\perp\!\!\!\!\perp^s_w u\otimes v\perp\!\!\!\!\perp^s_wA_2,$$
which implies ${\rm Diam}\,\varGamma ({\rm NI}(\mathcal{M} _n(\mathbb{C} )), \perp\!\!\!\!\perp^s_w)\leq4$.
\end{proof}

 We do not know if exactly ${\rm Diam}\,\varGamma ({\rm NI}(\mathcal{M} _n(\mathbb{C} )), \perp\!\!\!\!\perp^s_w)=4$ for all $3\leq n<\infty$. Moreover, we do not know what will happen to the strong numerical  radius orthgraph $\varGamma (\mathcal{B} (H), \perp\!\!\!\!\perp^s_w)$ nor  what  the diameter of $\varGamma ({\rm NI}(\mathcal{B} (H)), \perp\!\!\!\!\perp^s_w)$ is if $\dim H=\infty$.
  
  In the rest of this section, we discuss  the   orthograph $\varGamma (\mathcal{M} _2(\mathbb{C} ), \perp\!\!\!\!\perp^s_w)$, which is very different from the case when $n\geq 3$.

Let $${\rm NI}_1=\{A\in{\rm NI}(\mathcal{M} _2(\mathbb{C} )):A^2\neq 0\}\ \ {\rm and}\ \ {\rm NI}_2=\{A\in{\rm NI}(\mathcal{M} _2(\mathbb{C} )):A^2= 0\}.$$ It is obvious that ${\rm NI}(\mathcal{M} _2(\mathbb{C} ))={\rm NI}_1\cup {\rm NI}_2$.

We first give some properties of the elements in  ${\rm NI}_1$ and ${\rm NI}_2$.

\begin{prop}\label{prop3.2}
For any $A\in{\rm NI}_i$,  there exists some $B\in{\rm NI}_j$ such that $A\perp\!\!\!\!\perp^s_w B$, where $i\not=j\in\{1,2\}$.
\end{prop}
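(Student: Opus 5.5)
The proof splits into two cases, according to whether $A\in{\rm NI}_1$ or $A\in{\rm NI}_2$. In each case we exhibit an explicit partner of the other type and verify both directions of strong orthogonality. The tools are Corollary \ref{prop2.011}, which characterises $x\otimes y\perp^s_w B$ for square-zero $x\otimes y$, and Propositions \ref{prop2.0} and \ref{prop2.01}(iii), which handle the case $T^2\neq0$.

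\textbf{Case $A\in{\rm NI}_2$.} Write $A=\lambda\, x\otimes y$ with $x,y$ orthonormal and $\lambda\neq0$; by Proposition \ref{1.1}(2) we may take $\lambda=1$. We look for a rank-one $B$ with $B^2\neq0$, that is, a non-nilpotent rank-one matrix.

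\emph{Condition for $A\perp^s_wB$.} By Corollary \ref{prop2.011} we need $B^*x$ and $B^*y$ linearly dependent (automatic for rank one) and $\|B^*y\|\ge\|B^*x\|$. Writing $B=u\otimes v$, so that $B^*z=\langle z,u\rangle v$, this becomes $|\langle y,u\rangle|\ge|\langle x,u\rangle|$.

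\emph{Condition for $B\perp^s_wA$.} By Proposition \ref{prop2.01}(iii) it suffices to find $z\in M_B$ with $A^*z=0$. Since $A^*z=\langle z,x\rangle y$, this means $z\parallel y$.

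\emph{The choice.} Take $B=y\otimes y$, which is normal with $M_B=\{e^{i\delta}y\}$ and $B^2=B\neq0$. Then $|\langle y,y\rangle|=1\ge0=|\langle x,y\rangle|$, and $A^*y=0$. Hence $A\perp\!\!\!\!\perp^s_wB$ with $B\in{\rm NI}_1$.

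\textbf{Case $A\in{\rm NI}_1$.} Now $A$ is singular with $A^2\neq0$. Pick $z\in M_A$ (nonempty by compactness), a unit vector $v$ with $A^*v=0$, and let $z^\perp$ be a unit vector orthogonal to $z$. The natural candidates for $B$ are $z\otimes z^\perp$ and $z^\perp\otimes z$.

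\emph{Condition for $A\perp^s_wB$.} By Proposition \ref{prop2.01}(iii) we need some $w\in M_A$ with $B^*w=0$.
\begin{itemize}
\item For $B=z\otimes z^\perp$ we have $B^*w=\langle w,z\rangle z^\perp$, which vanishes iff $w\perp z$. So we need a unit vector of $M_A$ orthogonal to $z$.
\item For $B=z^\perp\otimes z$ we have $B^*w=\langle w,z^\perp\rangle z$, which vanishes iff $w\parallel z$. Taking $w=z$ works immediately.
\end{itemize}
So we take $B=z^\perp\otimes z$, that is, $B=x\otimes y$ with $x=z^\perp$ and $y=z$.

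\emph{Condition for $B\perp^s_wA$.} By Corollary \ref{prop2.011} we need $A^*z^\perp$ and $A^*z$ linearly dependent, which holds since $A$ has rank one, and $\|A^*z\|\ge\|A^*z^\perp\|$.

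This last inequality is the real obstacle. Write $A^*w=\langle w,u\rangle a$ for $A=u\otimes a$ (so $Aw=\langle w,a\rangle u$ and $A^*w=\langle w,u\rangle a$). The inequality then reads $|\langle z,u\rangle|\ge|\langle z^\perp,u\rangle|$, i.e.\ $|\langle z,u\rangle|^2\ge\tfrac12\|u\|^2$.

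\emph{Verifying the inequality.} We argue as in the proof of Proposition \ref{prop4.3}. In the basis $[z,z^\perp]$, write $A=\begin{pmatrix}a_1&a_2\\a_3&a_4\end{pmatrix}$ with $|a_1|=w(A)$. Proposition \ref{prop4.2} gives $|a_3|\le|a_1|$. The range of $A$ is spanned by $u$, and the first column $(a_1,a_3)^T=Az$ is a multiple of $u$. If $Az\neq0$, then $|\langle z,u\rangle|=|a_1|\ge|a_3|=|\langle z^\perp,u\rangle|$ after normalising $u$, which is exactly what we need. If $Az=0$, then $a_1=0$, so $w(A)=0$ and $A=0$, which is excluded.

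Hence $A\perp\!\!\!\!\perp^s_w z^\perp\otimes z$, and $z^\perp\otimes z\in{\rm NI}_2$. The only remaining care point is keeping the tensor convention $(x\otimes y)w=\langle w,y\rangle x$ consistent with the adjoint formulas used above.
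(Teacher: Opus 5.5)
Your proof is correct. It follows the paper in the ${\rm NI}_1$ case but verifies one direction differently, and it uses a simpler partner in the ${\rm NI}_2$ case.

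\textbf{The ${\rm NI}_1$ case.} Your partner $z^\perp\otimes z$ is exactly the paper's $B=u\otimes z$, and $A\perp^s_w B$ comes from $B^*z=0$ in both arguments (the paper cites Proposition \ref{prop2.0}, which is the sufficiency half of what you use). The difference lies in proving $B\perp^s_w A$.
\begin{itemize}
\item The paper normalises $A=x\otimes(\alpha x+\beta y)$ with $\alpha,\beta\ge 0$ and writes $z$ and $u$ in coordinates. It then reads off the strict inequality $|\langle x,u\rangle|<|\langle x,z\rangle|$ (using $\alpha\neq 0$) and applies Proposition \ref{prop2.01}(i).
\item You never compute $M_A$. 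You apply Proposition \ref{prop4.2} to the first column $Az$ to get $|\langle z,u\rangle|\ge|\langle z^\perp,u\rangle|$. This suffices because Corollary \ref{prop2.011} only needs the non-strict inequality.
\end{itemize}
This is the same device the paper uses in Proposition \ref{prop4.3}. Your version is coordinate-free but relies on the elliptical-range argument behind Proposition \ref{prop4.2}. The paper's calculation is elementary and produces the explicit maximal vector that it reuses in Proposition \ref{prop3.4}.

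\textbf{The ${\rm NI}_2$ case.} The paper inverts its first construction. It builds the family $B=x\otimes(\alpha x+\sqrt{1-\alpha^2}\,y)$, $\alpha\in(0,1)$, each member having the given $A$ as its square-zero partner. Your $B=y\otimes y$ is the endpoint $\alpha=1$ of that family; the paper restricts to $(0,1)$, but $\alpha=1$ works equally well. Since your $B$ is a projection, both directions of orthogonality are immediate.

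The unit vector $v$ with $A^*v=0$ that you introduce is never used and can be dropped.
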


\begin{proof}

Take any $A\in{\rm NI}_1$.  Then
$A$ can be written as $A=x\otimes (\alpha x+\beta y)$, where $x,y\in{\rm sphere}\,\mathbb{C} ^2$, $\langle x,y\rangle =0$, $\alpha ,\beta \in\mathbb{C} $ and $|\alpha |^2+|\beta |^2=1$ with $\alpha\not=0$. Without loss of generality, we may assume that $\alpha ,\beta \geq 0$; otherwise, replace $A$ by
$e^{i\theta }A=|\alpha |x\otimes x+|\beta |x\otimes e^{i(\delta -\theta )}y$
with $\theta =\arg \alpha $ and $\delta =\arg \beta $.
Let $B=u\otimes z\in{\rm NI}_2$ with $$u=\sqrt{\frac{1-\alpha }{2}}x-\sqrt{\frac{1+\alpha }{2}}y,\ \ \ z=\sqrt{\frac{1+\alpha }{2}}x+\sqrt{\frac{1-\alpha }{2}}y.$$ A direct calculation gives  $|\langle Az,z\rangle|=w(A)=\frac{1+\alpha }{2} $ and $B^*z=(z\otimes u)z=0$, which implies $A\perp^s_w B$ by Proposition \ref{prop2.0}.
 As $\langle x,u\rangle =\sqrt{\frac{1-\alpha }{2}}$ and $\langle x,z\rangle =\sqrt{\frac{1+\alpha }{2}}$, noting that  $\alpha \neq 0$, we arrive at $|\langle x,u\rangle|<|\langle x,z\rangle|$, which,  entails $B\perp^s_w A$.
Hence  $A\perp\!\!\!\!\perp^s_w B$ by Proposition \ref{prop2.01}.

Take any  $A\in{\rm NI}_2$. Then $A=u\otimes z$ for some $u,z\in{\rm sphere}\,H$ and $\langle u,z\rangle =0$. Let $B=x\otimes (\alpha x+\sqrt{1-\alpha ^2} y)\in{\rm NI}_1$, where
$$x=\sqrt{\frac{1-\alpha }{2}}u+\sqrt{\frac{1+\alpha }{2}}z,\ \ y=\sqrt{\frac{1-\alpha }{2}}z-\sqrt{\frac{1+\alpha }{2}}u,\ \ \alpha \in(0,1).$$ By the same argument as the above, one can show that $A\perp\!\!\!\!\perp^s_w B$.
\end{proof}

 Now, by Proposition \ref{prop3.2} and a  similar argument to that of Theorem \ref{prop4.5},   the following result is immediate.

\begin{thm}\label{prop3.3}
A connected component of  $\varGamma (\mathcal{M} _2(\mathbb{C} ), \perp\!\!\!\!\perp^s_w)$ is either an isolated vertice or the set  $\varGamma ({\rm NI} (\mathcal{M} _2(\mathbb{C} )) , \perp\!\!\!\!\perp^s_w)$. Moreover,
$$\{A\in\mathcal{M} _2(\mathbb{C} ):A {\rm \ is \ an \ isolated \ vertice }\}={\rm GL}(2,\mathbb{C} ).$$
\end{thm}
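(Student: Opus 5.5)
We argue exactly as in the proof of Theorem \ref{prop4.5}, now with Proposition \ref{prop3.2} playing the role of Proposition \ref{prop4.3}. Two facts are needed. (a) Every invertible matrix is an isolated vertex. (b) All of ${\rm NI}(\mathcal{M}_2(\mathbb{C}))\setminus\{0\}$ lies in one connected component, and no vertex of that component is invertible. Together these give the claimed dichotomy and the identification of the isolated vertices with ${\rm GL}(2,\mathbb{C})$.

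\emph{Isolated vertices.} Let $A\in{\rm GL}(2,\mathbb{C})$ and suppose $A\perp\!\!\!\!\perp^s_w B$. Then $B\perp^s_w A$. Since $A$ is invertible, it is right invertible, so Proposition \ref{prop2.3} forces $B=0$, which is not a vertex. Hence $A$ is isolated. Conversely, every nonzero singular $A$ has a neighbour by Proposition \ref{prop3.2}, so it is not isolated. Moreover, that neighbour cannot be invertible, because invertible vertices have no neighbours at all. Consequently every edge of the graph joins two singular matrices, and the component of any singular vertex lies inside $\varGamma({\rm NI}(\mathcal{M}_2(\mathbb{C})),\perp\!\!\!\!\perp^s_w)$.

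\emph{Connectivity of ${\rm NI}$.} This is the main step, since Proposition \ref{prop3.2} only provides some neighbour in the other class. I would use the explicit neighbours it builds. First, show that every element of ${\rm NI}_1$ is adjacent to some element of ${\rm NI}_2$. Next, check that any two square-zero matrices $u\otimes z$ and $u'\otimes z'$ can be linked through ${\rm NI}_1$.

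For the linking step, recall that by Proposition \ref{prop2.01} (and Corollary \ref{prop2.011}), adjacency between a rank-one $B\in{\rm NI}_1$ and $u\otimes z\in{\rm NI}_2$ reduces to two conditions. One is $B^*z=0$ or the analogous kernel condition at a maximizing vector. The other is an inequality $|\langle x,u\rangle|<|\langle x,z\rangle|$. In the construction of Proposition \ref{prop3.2}, the parameter $\alpha\in(0,1)$ is free. So a fixed $B=x\otimes(\alpha x+\sqrt{1-\alpha^2}y)$ is adjacent to the square-zero matrices attached to a whole one-parameter family of orthonormal pairs $(u,z)$. Chaining such families, via ${\rm NI}_2\to{\rm NI}_1\to{\rm NI}_2\to\cdots$, should let us move between any two orthonormal pairs of $\mathbb{C}^2$ in finitely many steps.

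The obstacle is to verify that these families genuinely cover all of sphere$\,\mathbb{C}^2$ modulo phases. I would first try a direct construction: for given $u\otimes z$ and $u'\otimes z'$, find a single rank-one $B\in{\rm NI}_1$ adjacent to both, using the two degrees of freedom in $B$. If that fails, I would fall back on a connectedness-of-the-sphere argument: the set reachable from $u\otimes z$ is open, since the inequality is strict and varies continuously, and it is also closed, hence it is everything. This shows that all vertices of ${\rm NI}$ lie in a single component, which completes the proof.
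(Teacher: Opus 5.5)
Your treatment of the isolated vertices is fine and matches the paper: invertible implies right invertible, so Proposition \ref{prop2.3} forces $B=0$, and Proposition \ref{prop3.2} gives every nonzero singular matrix a singular neighbour. The gap is in the connectivity step. The linking mechanism you propose, ${\rm NI}_2\to{\rm NI}_1\to{\rm NI}_2\to\cdots$, cannot connect anything.

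Take $B\in{\rm NI}_1$. As computed in Proposition \ref{prop3.4}, its maximizing set is a single circle $M_B=\{e^{i\xi}z_0\}$. By Proposition \ref{prop2.01}(iii), $B\perp^s_w u\otimes z$ holds iff $(u\otimes z)^*z_0=\langle z_0,u\rangle z=0$, i.e.\ iff $u\perp z_0$. In $\mathbb{C}^2$ this forces $u\in[z_0]^\perp$ and $z\in[z_0]$. So every vertex of ${\rm NI}_1$ has exactly one neighbour in ${\rm NI}_2$, namely $[z_0^\perp\otimes z_0]$.

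You read Proposition \ref{prop3.2} backwards. Its free parameter $\alpha$ produces a one-parameter family of ${\rm NI}_1$-neighbours of a \emph{fixed} square-zero matrix $u\otimes z$; each constructed $B$ has maximizing vector exactly $z$. It does not produce many square-zero neighbours of a fixed $B$.

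Consequently a walk ${\rm NI}_2\to{\rm NI}_1\to{\rm NI}_2$ always returns to its starting vertex. A single $B$ adjacent to both $u\otimes z$ and $u'\otimes z'$ exists only when these are the same vertex. The open--closed fallback also fails: adjacency imposes the closed condition $u\perp z_0$, not just a strict inequality. The set reachable by your chaining is therefore a single vertex, not an open set.

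The missing ingredient is direct adjacency between square-zero matrices. By Corollary \ref{prop2.011}, for orthonormal pairs $(x,y)$ and $(u,v)$, $x\otimes y\perp\!\!\!\!\perp^s_w u\otimes v$ iff $|\langle u,x\rangle|\le|\langle u,y\rangle|$ and $|\langle u,x\rangle|\le|\langle v,x\rangle|$. From this, Proposition \ref{prop3.5} shows that any two vertices of ${\rm NI}_2$ are adjacent or share a common neighbour in ${\rm NI}_2$. Together with Proposition \ref{prop3.2}, which attaches every ${\rm NI}_1$ vertex to some ${\rm NI}_2$ vertex, this gives connectivity of $\varGamma({\rm NI}(\mathcal{M}_2(\mathbb{C})),\perp\!\!\!\!\perp^s_w)$. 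The paper's ``immediate'' proof implicitly relies on this, and it is made explicit in the proof of Theorem \ref{th3.6}.
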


The remaining part of this section is devoted to deriving the diameter of $\varGamma ({\rm NI} (\mathcal{M} _2(\mathbb{C} )) , \perp\!\!\!\!\perp^s_w)$.

\begin{prop}\label{prop3.4}
For any  $A,B\in{\rm sphere}\,{\rm NI}_1 $, if $A\perp\!\!\!\!\perp^s_w B$, then $ |{\rm tr}\, A|=|{\rm tr}\, B|$.
\end{prop}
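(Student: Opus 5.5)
The plan is to reduce both orthogonality relations to a single geometric condition in $\mathbb{C}^2$, using Proposition \ref{prop2.01}(iii), and then compute one inner product in two ways. Since $A,B\in{\rm NI}_1$ have norm one, they are rank one with nonzero square. So I will write $A=x\otimes w$ and $B=x'\otimes w'$ with $x,w,x',w'\in{\rm sphere}\,\mathbb{C}^2$. With the convention $(x\otimes w)z=\langle z,w\rangle x$, one has ${\rm tr}\,A=\langle x,w\rangle$, and $A^2\neq0$ means $\langle x,w\rangle\neq 0$; similarly for $B$. Also $B^*=w'\otimes x'$, so $\ker B^*=\{x'\}^\perp$, and likewise $\ker A^*=\{x\}^\perp$.

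\textbf{Step 1: describe $M_A$.} For a unit vector $z$ we have $\langle Az,z\rangle=\langle z,w\rangle\langle x,z\rangle$. I will show that
$$w(A)=\max_{\|z\|=1}|\langle z,w\rangle||\langle z,x\rangle|=\frac{1+|{\rm tr}\,A|}{2}.$$
The maximum is attained exactly at $z_A$, the normalized bisector of $x$ and $e^{i\varphi}w$ (with the phase chosen so that $\langle x,e^{i\varphi}w\rangle\ge0$), up to a unimodular factor. This follows either from the explicit parametrization already used in the proof of Proposition \ref{prop3.2}, or from the ellipse description of $W(A)$, whose foci are $0$ and ${\rm tr}\,A$. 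The same computation gives
$$|\langle z_A,x\rangle|^2=\frac{1+|{\rm tr}\,A|}{2}.$$
When $|{\rm tr}\,A|=1$ the matrix $A$ is a multiple of a rank-one projection, $z_A=x$, and the formula still holds.

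\textbf{Step 2: translate the two orthogonality relations.} Since $A^2\ne0$, Proposition \ref{prop2.01}(iii) turns $A\perp^s_w B$ into the existence of $z\in M_A$ with $B^*z=0$, that is, $z_A\perp x'$. Because we are in $\mathbb{C}^2$, this forces $z_A$ to span $\{x'\}^\perp$. Hence
$$|\langle x,x'\rangle|^2=1-|\langle x,z_A\rangle|^2=\frac{1-|{\rm tr}\,A|}{2}.$$
Symmetrically, $B\perp^s_w A$ (which holds since $B^2\neq0$) gives $z_B\perp x$, and therefore $|\langle x',x\rangle|^2=\frac{1-|{\rm tr}\,B|}{2}$. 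Comparing the two expressions for $|\langle x,x'\rangle|^2$ yields $|{\rm tr}\,A|=|{\rm tr}\,B|$.

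\textbf{Main obstacle.} The only real work is Step 1: the precise determination of $M_A$ for a rank-one non-nilpotent $A$, and in particular the uniqueness of the maximizing vector up to phase when $|{\rm tr}\,A|<1$. I would do this directly. Write $z=\cos t\,x+e^{is}\sin t\,x^\perp$ and $w=\alpha x+\beta x^\perp$ with $\alpha,\beta\ge0$ after phase normalization, as in Proposition \ref{prop3.2}. Then maximize $|\cos t|\,|\alpha\cos t+\beta e^{-is}\sin t|$. For fixed $t$ the optimal phase is $s=0$, and the remaining one-variable maximization in $t$ is elementary. It gives the value $\frac{1+\alpha}{2}$ at a single $t$, which is the same choice of $z$ used in the proof of Proposition \ref{prop3.2}.
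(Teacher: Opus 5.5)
Your proof is correct and is essentially the paper's argument: both apply Proposition \ref{prop2.01}(iii) in each direction and use the explicit maximizing vector of a rank-one non-nilpotent $2\times 2$ matrix. The only difference is in how the computation is organized. You finish symmetrically via $|\langle x,x'\rangle|^2=1-|\langle z_A,x\rangle|^2=\frac{1-|{\rm tr}\,A|}{2}$ and its analogue for $B$, whereas the paper parametrizes $B=u\otimes(\lambda u+\gamma z_0)$ relative to $z_0$ and expands $w_0$ in coordinates before taking moduli. Also, uniqueness of the maximizer up to phase is not actually needed; only the value of $|\langle z,x\rangle|$ for $z\in M_A$ is used.
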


\begin{proof}
	
	Take any  $A,B\in{\rm sphere}\,{\rm NI}_1 $. We can write
$$A=x\otimes (\alpha x+\beta y)\ \ {\rm and}\ \ B=u\otimes v, $$ where $x,y,u,v\in{\rm sphere}\,\mathbb{C} ^2$, $\langle x,y\rangle =0$, $\alpha ,\beta \in\mathbb{C} \backslash \{0\}$ and $|\alpha |^2+|\beta |^2=1$. Let $\alpha =e^{-i\delta }|\alpha | $ and $\beta =e^{-i\theta  }|\beta |$ with $\delta ,\theta \in[0,2\pi)$. Through a simple calculation, one obtains
$$\{z\in{\rm sphere}\,\mathbb{C} ^2:|\langle Az,z\rangle |=w(A)\}=\{e^{i\xi }z_0:\xi\in [0,2\pi)\}$$ with
 \begin{equation}\label{4.01}
  z_0=\sqrt{\frac{1+|\alpha |}{2}}x+\sqrt{\frac{1-|\alpha |}{2}}e^{i(\delta -\theta )}y.
 \end{equation}
Since $A\perp^s_wB$, we have $B^*z_0=(v\otimes u)z_0=0$, and so  $\langle z_0,u\rangle =0$. Then
\begin{equation}\label{4.02}
  u=\sqrt{\frac{1-|\alpha |}{2}}e^{-i(\delta -\theta )}x-\sqrt{\frac{1+|\alpha |}{2}}y.
   \end{equation}
 Then $v=\lambda u+\gamma z_0$ with $\lambda ,\gamma \in\mathbb{C} $ and $|\lambda |^2+|\gamma |^2=1$, that is, $B=u\otimes (\lambda u+\gamma z_0)$.  Write $\lambda =|\lambda |e^{-i\delta '}$ and $\gamma =|\gamma |e^{-i\theta  '}$. Similarly,  we have
$$\{w\in{\rm sphere}\,\mathbb{C} ^2:|\langle Bw,w\rangle |=w(B)\}=\{e^{i\xi }w_0:\xi\in [0,2\pi)\}$$ with $$w_0=\sqrt{\frac{1+|\lambda |}{2}}u+\sqrt{\frac{1-|\lambda  |}{2}}e^{i(\delta' -\theta' )}z_0.$$ By Eqs.(\ref{4.01})-(\ref{4.02}), one gets
  $$\begin{array}{rl}w_0=&(e^{-i(\delta -\theta )}\sqrt{\frac{1+|\lambda |}{2}}\sqrt{\frac{1-|\alpha  |}{2}}+e^{i(\delta' -\theta' )}\sqrt{\frac{1-|\lambda |}{2}}\sqrt{\frac{1+|\alpha  |}{2}})x+\\
    &(-\sqrt{\frac{1+|\lambda |}{2}}\sqrt{\frac{1+|\alpha  |}{2}}+e^{i(\delta '-\theta '+\delta -\theta )}\sqrt{\frac{1-|\lambda |}{2}}\sqrt{\frac{1-|\alpha  |}{2}})y.\end{array}$$
As $B\perp^s_wA$, we infer that $A^*w_0=((\alpha x+\beta y)\otimes x)w_0=0$, which yields $\langle w_0,x\rangle =0$. So
$$e^{-i(\delta -\theta )}\sqrt{\frac{1+|\lambda |}{2}}\sqrt{\frac{1-|\alpha  |}{2}}+e^{i(\delta' -\theta' )}\sqrt{\frac{1-|\lambda |}{2}}\sqrt{\frac{1+|\alpha  |}{2}}=0,$$ 
which forces $|\lambda |=|\alpha |$. As $|{\rm tr}\, A|=|{\rm tr}\, (x\otimes (\alpha x+\beta y))|=|\alpha |$ and $|{\rm tr}\, B|=|{\rm tr}\, (u\otimes (\lambda u+\gamma z_0))|=|\lambda |$, it follows that $|{\rm tr}\, A|=|{\rm tr}\, B|$.
\end{proof}

\begin{prop}\label{prop3.5}
 If $A,B\in\mathcal{\rm NI}_2 $, then either $A\perp\!\!\!\!\perp^s_w B$ or $A\perp\!\!\!\!\perp^s_w C\perp\!\!\!\!\perp^s_wB$ for some $C\in{\rm NI}_2$.
\end{prop}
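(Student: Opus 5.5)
The approach is to reduce everything to a single scalar inequality. By Corollary \ref{prop2.011}, mutual strong numerical radius orthogonality between two elements of ${\rm NI}_2$ should come down to a condition on one inner product; once that is in hand, the intermediate vertex $C$ can be written down explicitly.

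\emph{Step 1: normalise.} Every element of ${\rm NI}_2$ is, up to a nonzero scalar, of the form $x\otimes y$ with $x,y\in{\rm sphere}\,\mathbb{C}^2$ and $\langle x,y\rangle=0$. So write $A=x\otimes y$ and $B=u\otimes v$ with $\langle x,y\rangle=\langle u,v\rangle=0$; scalars do not matter, since vertices are lines $\mathbb{F}A$.

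\emph{Step 2: compute when $A\perp\!\!\!\!\perp^s_wB$.}
\begin{itemize}
\item We have $B^*=v\otimes u$, so $B^*x=\langle x,u\rangle v$ and $B^*y=\langle y,u\rangle v$. These two vectors are automatically linearly dependent. By Corollary \ref{prop2.011}, $A\perp^s_wB$ holds if and only if $|\langle y,u\rangle|\geq|\langle x,u\rangle|$. Since $|\langle x,u\rangle|^2+|\langle y,u\rangle|^2=1$, this is equivalent to $|\langle x,u\rangle|^2\le \frac12$.
\item Symmetrically, $A^*u=\langle u,x\rangle y$ and $A^*v=\langle v,x\rangle y$. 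So $B\perp^s_wA$ holds if and only if $|\langle v,x\rangle|\ge|\langle u,x\rangle|$, which is again equivalent to $|\langle x,u\rangle|^2\le\frac12$.
\end{itemize}
Hence
$$x\otimes y\perp\!\!\!\!\perp^s_w u\otimes v\iff |\langle x,u\rangle|\le \tfrac{1}{\sqrt2}.$$
This is the step that needs the most care: one has to keep track of which vector in each tensor plays the role of "$x$" in Corollary \ref{prop2.011}, with the convention $(x\otimes y)z=\langle z,y\rangle x$.

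\emph{Step 3: conclude.} If $|\langle x,u\rangle|\le\frac1{\sqrt2}$, then $A\perp\!\!\!\!\perp^s_wB$ by Step 2. Otherwise $|\langle x,u\rangle|>\frac1{\sqrt2}$, and we take $C=y\otimes x\in{\rm NI}_2$.
\begin{itemize}
\item The pair $A,C$: the criterion of Step 2 requires $|\langle x,y\rangle|\le\frac1{\sqrt2}$, and indeed $\langle x,y\rangle=0$. So $A\perp\!\!\!\!\perp^s_wC$.
\item The pair $C,B$: the criterion requires $|\langle y,u\rangle|\le\frac1{\sqrt2}$. This holds because $|\langle y,u\rangle|^2=1-|\langle x,u\rangle|^2<\frac12$. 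So $C\perp\!\!\!\!\perp^s_wB$.
\end{itemize}
Finally, $C$ is a vertex distinct from both $A$ and $B$. It differs from $A$ because $y\otimes x$ is not a multiple of $x\otimes y$. It differs from $B$ because $u$ is not parallel to $y$, as $\langle x,u\rangle\neq0$. Hence $A\perp\!\!\!\!\perp^s_wC\perp\!\!\!\!\perp^s_wB$ is a genuine path with $C\in{\rm NI}_2$.
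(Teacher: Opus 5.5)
Your proof is correct, but it takes a different route from the paper's. The paper normalises $u=\alpha x+\beta y$ with $\alpha,\beta\ge 0$ and treats $\alpha=0$ and $\alpha=1$ directly. For $0<\alpha<1$ it never decides whether $A$ and $B$ are adjacent. Instead it builds an auxiliary $C=z\otimes w$ with $z=\lambda x+\gamma y$, $w=\gamma x-\lambda y$, $\lambda>0>\gamma$, $|\lambda|<|\gamma|$, and lets $\lambda\to(1/\sqrt2)^-$ until $\frac{4\lambda\gamma}{\gamma^2-\lambda^2}<\frac{\alpha^2-\beta^2}{\alpha\beta}$; Proposition \ref{prop2.01} then makes $C$ a common neighbour of $A$ and $B$.

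You instead extract from Corollary \ref{prop2.011} the exact adjacency criterion $x\otimes y\perp\!\!\!\!\perp^s_w u\otimes v$ if and only if $|\langle x,u\rangle|\le 1/\sqrt2$. The two one-sided conditions coincide because $\{x,y\}$ and $\{u,v\}$ are both orthonormal bases of $\mathbb{C}^2$. The intermediate vertex is then simply $C=y\otimes x=A^*$, with no parameter to tune.

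Your route yields more information. It characterises adjacency in ${\rm NI}_2$: the ranges must meet at an angle of at least $\pi/4$. It shows that every vertex of ${\rm NI}_2$ is adjacent to $A$ or to $A^*$, and so tells you exactly when the distance is $1$ and when it is $2$. The paper's construction gives a common neighbour for every pair with $0<\alpha<1$, including adjacent pairs, but at the cost of a less transparent limiting choice of $C$.
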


\begin{proof} For  any $A,B\in\mathcal{\rm NI}_2 $,  without loss of generality, we may assume $\|A\|=\|B\|=1$. \if false and $A\not\perp\!\!\!\!\perp^s_w B$.\fi
Then $A=x\otimes y$ and $B=u\otimes v$ for some $x,y,u,v\in{\rm sphere}\,\mathbb{C} ^2$ with $\langle x,y\rangle =\langle u,v\rangle =0$ and $u=\alpha x+\beta y$ with some  $\alpha, \beta\in\mathbb{C} $. Also we may assume that $\alpha ,\beta \geq 0$ and thus $v= \beta x- \alpha y$.
Indeed, letting $\alpha =|\alpha |e^{i\theta }$ and $\beta =|\beta |e^{i\delta }$, we have
$$e^{-i\theta }u=|\alpha |x+|\beta|e^{i(\delta -\theta )}y {\rm \ and \ } e^{i\delta }v=|\beta |x-|\alpha |e^{i(\delta -\theta )}y.$$
Thus we may replace $x\otimes e^{i(\delta -\theta )}y$ and $e^{-i\theta }u\otimes e^{i\delta }v $ with $A$ and $B$, respectively.

 If $\alpha =0$, then  $u=y$, $v=x$, and so  $B=y\otimes x$. It is obvious that $A\perp\!\!\!\!\perp^s_w B$.

If  $\alpha =1$, then  $u=x$, $v=-y$, and so  $B=-x\otimes y$. It follows that $A\perp\!\!\!\!\perp^s_w C\perp\!\!\!\!\perp^s_wB$ with $C=y\otimes x$.

So, we may assume that  $0<\alpha < 1$. Take  $C=z\otimes w\in{\rm NI}_2 $ with $z,w\in{\rm sphere}\,\mathbb{C} ^2$, where $z=\lambda x+\gamma y$, $w=\gamma x-\lambda y$, $\lambda >0$, $\gamma <0$ and $|\lambda |<|\gamma |$. As $\lim\limits_{\lambda  \to (\frac{1}{\sqrt{2}})^-}\frac{4\lambda \gamma }{\gamma ^2-\lambda ^2} =-\infty ,$
we can choose suitable $\lambda $ such that $\frac{4\lambda \gamma }{\gamma ^2-\lambda ^2}<\frac{\alpha ^2-\beta ^2}{\alpha \beta }$. So $|\langle u,z\rangle |<|\langle v,z\rangle |$ and $|\langle u,z\rangle |<|\langle u,w\rangle |$,  which, together with Proposition \ref{prop2.01}, yields  $B\perp\!\!\!\!\perp^s_w C$.
On the other hand, as  $|\lambda |<|\gamma |$, one gets  that $|\langle z,x\rangle |<|\langle z,y\rangle |$ and $|\langle x,z\rangle |<|\langle x,w\rangle |$, which entails  $A\perp\!\!\!\!\perp^s_w C$ by Proposition \ref{prop2.01}.  Hence $A\perp\!\!\!\!\perp^s_w C\perp\!\!\!\!\perp^s_wB$.

The proof is finished.
\end{proof}

\if false
\begin{prop}\label{prop4.07}
There exist two vertices $A,B\in\varGamma ({\rm NI} (\mathcal{M} _2(\mathbb{C} )) , \perp\!\!\!\!\perp^s_w )$ such that the distance between $A$ and $B$ is greater than $3$.
\end{prop}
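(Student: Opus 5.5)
The plan is to exhibit two vertices in ${\rm sphere}\,{\rm NI}_1$ and show that no path of length $\le 3$ joins them. First I will describe all neighbours of a vertex $A\in{\rm NI}_1$. Write $A=x\otimes(\alpha x+\beta y)$ with $\alpha\beta\neq0$. As in the proof of Proposition \ref{prop3.4}, the set $M_A$ of maximizing vectors is the circle $\{e^{i\xi}z_A\}$, where $z_A$ is given by Eq.(\ref{4.01}). Let $u_A$ be a unit vector orthogonal to $z_A$.

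Suppose $A\perp\!\!\!\!\perp^s_w C$ with $C\neq0$. By Theorem \ref{prop3.3}, $C$ is singular, hence of rank one. Since $A^2\neq0$, Proposition \ref{prop2.01}(iii) gives $C^*z_A=0$. Therefore ${\rm ran}\,C=[u_A]$, that is, $C=u_A\otimes v$ for some $v$. This gives two cases.
\begin{enumerate}[(a)]
\item If $C\in{\rm NI}_2$, then $v\perp u_A$, so $v\parallel z_A$. Up to a scalar, $C$ is then the single matrix $N_A:=u_A\otimes z_A$.
\item If $C\in{\rm NI}_1$, then Proposition \ref{prop3.4} gives $|{\rm tr}\,C|=|{\rm tr}\,A|$ after normalizing. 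In that proof $|{\rm tr}\,C|$ equals the parameter $|\lambda|$, so the maximizing vector of $C=u_A\otimes(\lambda u_A+\gamma z_A)$ satisfies $|\langle z_C,u_A\rangle|^2=\frac{1+|{\rm tr}\,A|}{2}$.
\end{enumerate}

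Now take $A,B\in{\rm sphere}\,{\rm NI}_1$ with $|{\rm tr}\,A|\neq|{\rm tr}\,B|$, say $|{\rm tr}\,A|=\frac12$ and $|{\rm tr}\,B|=\frac13$. Choose $z_B$ close to $z_A$ but not parallel to it; this can be arranged by rotating the vectors $x,y$ used to build $B$. I will rule out paths by length.
\begin{enumerate}[(1)]
\item Distance $1$ is impossible, by Proposition \ref{prop3.4}.
\item Distance $2$ through some $C$ would force ${\rm ran}\,C=[u_A]=[u_B]$. This contradicts $z_A\not\parallel z_B$.
\item For distance $3$, consider a path $A\,C_1\,C_2\,B$ and split according to the types of $C_1$ and $C_2$.
\begin{enumerate}[(i)]
\item Both in ${\rm NI}_1$. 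Proposition \ref{prop3.4} applied along the path gives $|{\rm tr}\,A|=|{\rm tr}\,B|$, which is false.
\item Both in ${\rm NI}_2$. Then $C_1=N_A$ and $C_2=N_B$. Corollary \ref{prop2.011}, applied to $u_A\otimes z_A\perp^s_w u_B\otimes z_B$, requires $|\langle z_A,u_B\rangle|\ge|\langle u_A,u_B\rangle|$. Since $z_B$ is close to $z_A$, the vector $u_B$ is close to $u_A$ up to phase, so the left side is near $0$ and the right side near $1$. This is a contradiction.
\item $C_1\in{\rm NI}_1$ and $C_2\in{\rm NI}_2$. Then $C_2=N_B$. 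But $C_2$ is also the unique ${\rm NI}_2$-neighbour $N_{C_1}$ of $C_1$, so $z_{C_1}\parallel z_B$. Case (b) then forces $|\langle z_B,u_A\rangle|^2=\frac{1+1/2}{2}=\frac34$. However $|\langle z_B,u_A\rangle|$ is near $0$, which is a contradiction.
\item $C_1\in{\rm NI}_2$ and $C_2\in{\rm NI}_1$. This is symmetric to (iii), and requires $|\langle z_A,u_B\rangle|^2=\frac23$, which fails for the same reason.
\end{enumerate}
\end{enumerate}
All paths of length at most $3$ are excluded. The graph is connected by Theorem \ref{prop3.3}, so the distance between $A$ and $B$ exceeds $3$.

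The main obstacle is the neighbour characterization, especially case (b). I only need the necessary conditions $C^*z_A=0$ and $|{\rm tr}\,C|=|{\rm tr}\,A|$, together with the explicit formula for $z_C$ from the proof of Proposition \ref{prop3.4}. I also need to check that a phase choice in $B$ really makes $z_B$ arbitrarily close to $z_A$. That is an explicit parametrization via Eq.(\ref{4.01}) with $|\alpha|=\frac13$, choosing $x,y$ so that the resulting vector nearly matches $z_A$.
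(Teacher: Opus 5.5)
Your proof is correct. Its skeleton is the same as the paper's (the argument appears as the first half of the proof of Theorem~\ref{th3.6}). You rule out paths of length $1,2,3$ by splitting on whether the intermediate vertices lie in ${\rm NI}_1$ or ${\rm NI}_2$. The tools are the same too: the kernel condition from Proposition~\ref{prop2.01}(iii), the trace invariant of Proposition~\ref{prop3.4}, and Corollary~\ref{prop2.011} for an ${\rm NI}_2$--${\rm NI}_2$ edge.

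The real difference is the choice of witnesses.

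\textbf{The paper's witnesses.} It takes two rank-one projections $x\otimes x$ and $u\otimes u$ with $|\langle u,x\rangle|>|\langle u,y\rangle|>0$. Because $|{\rm tr}|=1$, Proposition~\ref{prop3.4} forces every ${\rm NI}_1$-neighbour of $x\otimes x$ to be a scalar multiple of the projection onto $[x]^\perp$. So every case becomes a one-line orthogonality check in $\mathbb C^2$. This includes the ${\rm NI}_1$--${\rm NI}_1$ case, which cannot be dismissed by traces there since all traces agree.

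\textbf{Your witnesses.} You take non-normal witnesses with $|{\rm tr}\,A|\neq|{\rm tr}\,B|$, so distance $1$ and the ${\rm NI}_1$--${\rm NI}_1$ case are ruled out at once. The price is that the mixed cases need the identity $|\langle z_C,u_A\rangle|^2=\frac{1+|{\rm tr}\,A|}{2}$, which is only implicit in the proof of Proposition~\ref{prop3.4}. They also need a closeness condition, which you should make explicit. Set $t=|\langle z_A,z_B\rangle|^2$. Everything works as soon as $\frac12<t<1$, because then $|\langle z_A,u_B\rangle|^2=|\langle z_B,u_A\rangle|^2=1-t<\frac12$. This value lies below $t$, $\frac23$ and $\frac34$ simultaneously. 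It is the exact analogue of the paper's hypothesis $|\alpha|>|\beta|$.

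\textbf{What your route gains.} You treat the case $C_1\in{\rm NI}_1$, $C_2\in{\rm NI}_2$ explicitly, which the paper leaves to symmetry. Your neighbour description is also reusable: every neighbour of $A\in{\rm NI}_1$ has range $[z_A]^\perp$, and $u_A\otimes z_A$ is its only possible ${\rm NI}_2$-neighbour. With it you obtain far-apart pairs with unequal trace moduli, complementing the paper's equal-trace example.
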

\fi

\begin{thm}\label{th3.6}
 ${\rm Diam}\,\varGamma ({\rm NI} (\mathcal{M} _2(\mathbb{C} )), \perp\!\!\!\!\perp^s_w )=4$.
\end{thm}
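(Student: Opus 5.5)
The proof has two halves: an upper bound of $4$ obtained from the structure of ${\rm NI}_1$ and ${\rm NI}_2$, and a pair of vertices at distance at least $4$ built from Proposition \ref{prop3.4}.

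\emph{Upper bound.} Take $A,B\in{\rm NI}(\mathcal M_2(\mathbb C))$. By Proposition \ref{prop3.2}, every element of ${\rm NI}_1$ is adjacent to some element of ${\rm NI}_2$. We may therefore pick $A',B'\in{\rm NI}_2$ as follows: $A'=A$ if $A\in{\rm NI}_2$, and otherwise $A'$ is a neighbour of $A$ in ${\rm NI}_2$; define $B'$ from $B$ in the same way. By Proposition \ref{prop3.5}, $A'$ and $B'$ are at distance at most $2$. This gives $d(A,B)\le 1+2+1=4$ in every case (fewer steps when $A$ or $B$ already lies in ${\rm NI}_2$). Hence ${\rm Diam}\le 4$.

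\emph{Lower bound.} I would choose $A,B\in{\rm sphere}\,{\rm NI}_1$ with $|{\rm tr}\,A|\neq|{\rm tr}\,B|$. A concrete choice is $A=e_1\otimes e_1$, which has trace $1$, and $B=x\otimes(\alpha x+\beta y)$ with $0<\alpha<1$. I would then rule out paths of length $1,2,3$.

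Length $1$ is excluded directly by Proposition \ref{prop3.4}, since the traces differ in modulus.

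For length $2$, suppose $A\perp\!\!\!\!\perp^s_w C\perp\!\!\!\!\perp^s_w B$. If $C\in{\rm NI}_1$, Proposition \ref{prop3.4} applied twice gives $|{\rm tr}\,A|=|{\rm tr}\,C|=|{\rm tr}\,B|$, a contradiction. So $C\in{\rm NI}_2$ is forced, and here I need a rigidity statement: a rank-one $A$ with $A^2\neq0$ has only \emph{one} neighbour in ${\rm NI}_2$ up to scalars, and that neighbour is determined explicitly by $A$. The reason is that $A\perp^s_wC$ forces $C^*z_0=0$, where $z_0$ is the unique (up to phase) maximizing vector of $A$; this fixes the range direction $u\perp z_0$ of $C=u\otimes w$. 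Since $C^2=0$, we then need $w\perp u$, so $w$ is fixed as well. This is exactly the computation in the proof of Proposition \ref{prop3.4}, combined with Proposition \ref{prop2.01}(iii). Consequently $A$ and $B$ have common neighbours in ${\rm NI}_2$ only if their unique ${\rm NI}_2$-neighbours coincide. I would choose $B$ so that this fails, for instance $x,y$ in generic position relative to $e_1,e_2$.

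For length $3$, a path $A-C_1-C_2-B$ must be of one of the following types.
\begin{enumerate}[(a)]
\item If $C_1\in{\rm NI}_1$, then $|{\rm tr}\,C_1|=1$, and $C_2$ must lie in ${\rm NI}_2$ (otherwise the traces along the path would all agree). So $C_2$ is the unique ${\rm NI}_2$-neighbour $N_B$ of $B$, and we need some $C_1\in{\rm NI}_1$ with trace modulus $1$ that is adjacent to both $A$ and $N_B$.
\item If $C_1\in{\rm NI}_2$, then $C_1$ is the unique ${\rm NI}_2$-neighbour $N_A$ of $A$. Then $C_2$ is either $N_B$ (requiring $N_A\perp\!\!\!\!\perp^s_w N_B$), or some $C_2\in{\rm NI}_1$ with $|{\rm tr}\,C_2|=|{\rm tr}\,B|$ adjacent to both $N_A$ and $B$.
\end{enumerate}

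The trace-$1$ elements of sphere ${\rm NI}_1$ are the rank-one projections, and a projection $P$ satisfies $P\perp^s_w A$ with $A=e_1\otimes e_1$ only when $P=e_2\otimes e_2$. By Proposition \ref{prop3.4}'s computation with $|\alpha|=1$, the neighbours of $A$ are exactly $e_2\otimes e_2$ and $N_A$. This reduces case (a) to the single condition $e_2\otimes e_2\perp\!\!\!\!\perp^s_w N_B$, and I would choose $B$ to violate it. For case (b), Corollary \ref{prop2.011} turns $N_A\perp\!\!\!\!\perp^s_w N_B$ into an explicit inequality, and the remaining subcase again reduces to uniqueness statements via Proposition \ref{prop3.4}.

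The main obstacle is organizing this case analysis and choosing parameters $\alpha,x,y$ for $B$ that simultaneously defeat every subcase. In particular, for the case of $C_2\in{\rm NI}_1$ adjacent to both $N_A$ and $B$, I would first describe all ${\rm NI}_1$-neighbours of $B$ via the explicit formula for $u$ in Proposition \ref{prop3.4} with $|\lambda|=|\alpha|$. That family is one-parameter, coming from the phase $e^{i(\delta'-\theta')}$. I would then check adjacency to $N_A$ using Proposition \ref{prop2.01}.
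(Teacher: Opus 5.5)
Your argument is sound and can be completed, but the lower bound follows a different line from the paper's; the upper bound is the same (Propositions \ref{prop3.2} and \ref{prop3.5}).

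For the lower bound, the paper takes two rank-one projections $A=x\otimes x$ and $B=u\otimes u$ with $u=\alpha x+\beta y$, $|\alpha|>|\beta|\neq0$. Both have trace modulus $1$, so Proposition \ref{prop3.4} cannot separate them. Paths of length $1$ and $2$ are therefore excluded by range conditions ($C^*x=C^*u=0$). For length $3$ the paper splits according to whether $C_1,C_2$ lie in ${\rm NI}_1$ or ${\rm NI}_2$. It uses Proposition \ref{prop3.4} only to force ${\rm NI}_1$-intermediates to be projections, and Proposition \ref{prop2.01} for the ${\rm NI}_2$--${\rm NI}_2$ edge, which is where $|\alpha|>|\beta|$ enters.

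You instead take endpoints with different trace moduli. Then Proposition \ref{prop3.4} alone kills every ${\rm NI}_1$--${\rm NI}_1$ edge joining the two ends. You also use the rigidity that a vertex of ${\rm NI}_1$ has a unique ${\rm NI}_2$-neighbour $z^\perp\otimes z$, with $z$ its maximizing vector. This reduces everything to a handful of explicit adjacency checks and shows exactly which conditions $B$ must satisfy. The paper's symmetric choice gives quicker contradictions but hides this structure.

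Two loose ends need tightening.

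First, ``generic position'' is not enough. $N_A\perp\!\!\!\!\perp^s_w N_B$ holds exactly when $|\langle z_B,e_2\rangle|\ge|\langle z_B,e_1\rangle|$ (Corollary \ref{prop2.011}), and this set has nonempty interior. All your subcases are excluded once $0<|\langle z_B,e_2\rangle|<|\langle z_B,e_1\rangle|$ and $\langle x,e_1\rangle\neq0$. The choice $x=e_1$, $y=e_2$, $0<\alpha<1$, $\beta=\sqrt{1-\alpha^2}$ works, with $z_B=\sqrt{\tfrac{1+\alpha}{2}}\,e_1+\sqrt{\tfrac{1-\alpha}{2}}\,e_2$.

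Second, the ${\rm NI}_1$-neighbours of $B$ do not form a one-parameter family. In the proof of Proposition \ref{prop3.4}, the condition $\langle w_0,x\rangle=0$ forces $e^{i(\delta'-\theta')}=-e^{-i(\delta-\theta)}$ as well as $|\lambda|=|\alpha|$. So that neighbour is unique up to scalars, and every vertex of ${\rm NI}_1$ has exactly two neighbours. The error is harmless, and the subcase $C_1=N_A$, $C_2\in{\rm NI}_1$ needs no parametrization anyway. $C_2\perp\!\!\!\!\perp^s_w N_A=e_2\otimes e_1$ forces $z_{C_2}\in[e_1]$, while $C_2\perp^s_w B$ forces $z_{C_2}\perp{\rm ran}\,B=[x]$. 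These are incompatible when $\langle x,e_1\rangle\neq0$.
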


\begin{proof}
Take $A=x\otimes x, \ B=u\otimes u\in{\rm NI} (\mathcal{M} _2(\mathbb{C} ))$ with $x,u\in{\rm sphere}\,\mathbb{C} ^2$  such that
$$u=\alpha x+\beta y\ \ {\rm with} \ y\in{\rm sphere}\,\mathbb{C} ^2,\ \ \langle x,y\rangle =0,\ \ \alpha ,\beta \in\mathbb{C} \ {\rm and}\ |\alpha |> |\beta |\neq 0.$$ 

It is obvious that $A\not\perp\!\!\!\!\perp^s_wB$. So the distance between $A$ and $B$ is greater than 1.

Assume that there exists some $C=z\otimes w\in{\rm sphere}\,({\rm NI} (\mathcal{M} _2(\mathbb{C} ))  ) $ with $z,w\in{\rm sphere}\,\mathbb{C} ^2$ such that $A\perp\!\!\!\!\perp^s_w C\perp\!\!\!\!\perp^s_wB$. By  $A\perp^s_wC$ and $B\perp^s_wC$, one obtains  $C^*x=C^*u=0$, which means $\langle x,z\rangle =\langle u,z\rangle =0$, a contradiction. So the distance between $A$ and $B$ is greater than 2.

Now, assume that  there exist some  $C_1,C_2\in{\rm sphere}\,({\rm NI} (\mathcal{M} _2(\mathbb{C} ))  ) $ such that $A\perp\!\!\!\!\perp^s_w C_1\perp\!\!\!\!\perp^s_w C_2\perp\!\!\!\!\perp^s_wB$.
If $C_1,C_2\in{\rm sphere}\,{\rm NI} _1$, then by Proposition \ref{prop3.4},  we have $|{\rm tr}\,C_1|=|{\rm tr}\,C_2|=|{\rm tr}(A)|=1$. Thus  $C_1=z_1\otimes z_1$ and $C_2=z_2\otimes z_2$ for some $z_1,z_2\in{\rm sphere}\,\mathbb{C}^2$. It follows that $\langle x,z_1\rangle =\langle z_1,z_2\rangle =\langle u,z_2\rangle =0$, which leads to $\langle u,x\rangle =0$, a contradiction.
If $C_1\in{\rm sphere}\,{\rm NI}_2$ and $C_2\in{\rm sphere}\,{\rm NI} _1$, we can write $C_1=w\otimes z$ with  $w,z\in{\rm sphere}\,\mathbb{C} ^2$ and $\langle w,z\rangle =0$. On the one hand, since $A\perp^s_wC_1$, one has  $(z\otimes w)x=0$, and thus $\langle x,w\rangle =0$, which means $w\in[y]$. Combined with $C_1\in{\rm sphere}\,{\rm NI}_2$, we get $C_1=e^{i\theta }y\otimes x$ for some $\theta \in[0,2\pi)$. On the other hand, as $B\perp\!\!\!\!\perp^s_wC_2$, one has $|{\rm tr}\,C_2|=|{\rm tr}\,B|=1$ by Proposition \ref{prop3.4} and $C_2\in{\rm sphere}\,{\rm NI} _1$. Thus  $C_2=e^{i\delta }v\otimes v$ with $v\in{\rm sphere}\,[u]^\bot $ and $\delta \in[0,2\pi)$.  It follows  from $C_2\perp^s_wC_1$ that $(x\otimes y)v=0$, which means $\langle v,y\rangle =0$, a contradiction with $\alpha ,\beta \neq 0$.
If $C_1,C_2\in{\rm sphere}\,{\rm NI} _2$, then we can write  $C_1=w_1\otimes z_1$ and $C_2=w_2\otimes z_2$, where $w_1,w_2,z_1,z_2\in{\rm sphere}\,\mathbb{C} ^2$ and $\langle w_1,z_1\rangle=\langle w_2,z_2\rangle =0$. As $A\perp^s_wC_1$ and $B\perp^s_wC_2$, one gets  $(z_1\otimes w_1)x=(z_2\otimes w_2)u=0$, which implies $\langle x,w_1\rangle =\langle u,w_2\rangle =0$. That is, $w_1\in[y]$ and $w_2\in[u]^\bot $. Hence   $C_1=e^{i\theta _1}y\otimes x$ and $C_2=e^{i\theta _2}v\otimes u$ with $\theta _1,\theta _2\in[0,2\pi)$ and $v\in{\rm sphere}\,[u]^\bot $. As $C_1\perp^s_wC_2$, we achieve $|\langle v,y\rangle |\leq |\langle v,x\rangle |$ by Proposition \ref{prop2.01}, contradicting to $|\alpha |>|\beta |$.

Combining the above discussion, we conclude that the distance between $A$ and $B$ is greater than $3$. So ${\rm Diam}\,\varGamma ({\rm NI} (\mathcal{M} _2(\mathbb{C} )), \perp\!\!\!\!\perp^s_w )>3$.

 Take any $A,B\in{\rm NI} (\mathcal{M} _2(\mathbb{C} ))$.

 If $A,B\in{\rm NI} _1$, by Proposition \ref{prop3.2}, there exist some $C_1,C_2\in{\rm NI} _2$ such that $A\perp\!\!\!\!\perp^s_w C_1$ and $B\perp\!\!\!\!\perp^s_w C_2$. By Proposition \ref{prop3.5},   either $C_1\perp\!\!\!\!\perp^s_w C_2$, or $C_1\perp\!\!\!\!\perp^s_w C_3\perp\!\!\!\!\perp^s_wC_2$ for some $C_3\in{\rm NI} _2$. Thus, we get that either
$A\perp\!\!\!\!\perp^s_w C_1\perp\!\!\!\!\perp^s_w C_2\perp\!\!\!\!\perp^s_w B,$
or
$A\perp\!\!\!\!\perp^s_w C_1\perp\!\!\!\!\perp^s_w C_3\perp\!\!\!\!\perp^s_w C_2\perp\!\!\!\!\perp^s_w B.$

If  $A\in{\rm NI} _2$ and $B\in{\rm NI} _1$, by Proposition \ref{prop3.2} and Proposition \ref{prop3.5}, one obtains that either
$A\perp\!\!\!\!\perp^s_w C_1\perp\!\!\!\!\perp^s_w B $ for  some $C_1\in{\rm NI} _2,$
or
$A\perp\!\!\!\!\perp^s_w C_2\perp\!\!\!\!\perp^s_w C_1\perp\!\!\!\!\perp^s_w B $  for   some $C_1,C_2\in{\rm NI} _2.$

If $A,B\in{\rm NI} _2$, by Proposition \ref{prop3.5}, we still obtain that either
$A\perp\!\!\!\!\perp^s_w  B ,$
or
$A\perp\!\!\!\!\perp^s_w C\perp\!\!\!\!\perp^s_w B $  for  some   $C\in{\rm NI} _2.$

 The above arguments show that $3<{\rm Diam}\,\varGamma ({\rm NI} (\mathcal{M} _2(\mathbb{C} )), \perp\!\!\!\!\perp^s_w )\leq 4$. Therefore, we must have ${\rm Diam}\,\varGamma ({\rm NI} (\mathcal{M} _2(\mathbb{C} )), \perp\!\!\!\!\perp^s_w )= 4$, as desired.
\end{proof}

\section{the strong numerical radius orthograph $\varGamma (\mathcal{B} _s(H), \perp\!\!\!\!\perp^s_w)$}

In this section,  we study the strong numerical radius orthograph $\varGamma (\mathcal{B} _s(H), \perp\!\!\!\!\perp^s_w)$ with $2\leq \dim H\leq \infty$.

Our first result is a useful proposition.

\begin{prop}\label{prop5.1}
  If $\dim H\geq 3$, then there exist two vertices $A,B\in\varGamma ({\rm NI}(\mathcal{B} _s(H)), \perp\!\!\!\!\perp^s_w)$ such that the distance between $A$ and $B$ is greater than $2$.
Particularly, if $\dim H=3$, then there exist two vertices $A,B\in\varGamma (\mathcal{B} _s(H), \perp\!\!\!\!\perp^s_w)$ such that the distance between $A$ and $B$ is greater than $3$.
\end{prop}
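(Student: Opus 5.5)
The plan rests on Proposition \ref{prop2.1}. Every self-adjoint operator is normaloid. So for $X,Y\in\mathcal B_s(H)$ with $X\neq0$, we have $X\perp^s_wY$ iff there is a sequence of unit vectors $x_n$ with $|\langle Xx_n,x_n\rangle|\to\|X\|$ and $Yx_n\to0$. In finite dimensions this says: some eigenvector of $X$ for an eigenvalue of modulus $\|X\|$ lies in $\ker Y$. I will build $A,B$ whose top eigenvalue is simple and isolated and whose kernel is one-dimensional. Then every approximate norming sequence and every approximate kernel sequence is forced to converge to a fixed line, which makes the condition rigid.

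Fix orthonormal $e_1,e_2,e_3\in H$ and put $A=P_{[e_1]}+\tfrac12(I-P_{[e_1]}-P_{[e_3]})$, so $\sigma(A)\subseteq\{0,\tfrac12,1\}$ with $\ker A=[e_3]$. Put $f_1=\tfrac1{\sqrt2}(e_1+e_3)$, $f_3=\tfrac1{\sqrt2}(e_1-e_3)$ and $B=P_{[f_1]}+\tfrac12(I-P_{[f_1]}-P_{[f_3]})$. Both operators are singular.

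First I show $A\not\perp^s_wB$. Any norming sequence of $A$ approaches the line $[e_1]$, since $1$ is an isolated simple eigenvalue. But $Be_1=\langle e_1,f_1\rangle f_1\neq0$, so the criterion above fails. Hence the distance between $A$ and $B$ is at least $2$.

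Next suppose $A\perp\!\!\!\!\perp^s_wC\perp\!\!\!\!\perp^s_wB$ with $C\neq0$. From $A\perp^s_wC$ we get $Ce_1=0$. From $C\perp^s_wA$ there are unit $x_n$ with $|\langle Cx_n,x_n\rangle|\to\|C\|$ and $Ax_n\to0$. Since $A$ is bounded below by $\tfrac12$ on $[e_3]^\perp$, we get $x_n\to[e_3]$ up to phases. Hence $|\langle Ce_3,e_3\rangle|=\|C\|$, which forces $Ce_3=\lambda e_3$ with $|\lambda|=\|C\|>0$. By symmetry, $Cf_3=\mu f_3$ with $\mu\neq0$. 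But $C$ is self-adjoint, so an eigenvector for a nonzero eigenvalue is orthogonal to $\ker C\ni e_1$, while $\langle f_3,e_1\rangle=\tfrac1{\sqrt2}\neq0$. This contradiction gives distance $>2$, for every $\dim H\geq3$, finite or infinite.

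For $\dim H=3$ I need distance $>3$, i.e. no path $A\perp\!\!\!\!\perp^s_wC_1\perp\!\!\!\!\perp^s_wC_2\perp\!\!\!\!\perp^s_wB$. Both $C_i$ are singular and nonzero: by Proposition \ref{prop2.4} invertible vertices are isolated, and a zero vertex is excluded. The argument above gives:
\begin{itemize}
\item $C_1e_1=0$ and $C_1e_3=\lambda_1e_3$ with $|\lambda_1|=\|C_1\|$;
\item $C_2f_1=0$ and $C_2f_3=\lambda_2f_3$ with $|\lambda_2|=\|C_2\|$.
\end{itemize}
So $C_1=\lambda_1P_{[e_3]}+\nu_1P_{[e_2]}$ with $|\nu_1|\leq|\lambda_1|$, and $C_2=\lambda_2P_{[f_3]}+\nu_2P_{[e_2]}$.

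The remaining conditions are $C_1\perp^s_wC_2$ and $C_2\perp^s_wC_1$: some top eigenvector of $C_1$ lies in $\ker C_2$, and vice versa. I will split into cases according to whether $|\nu_i|=|\lambda_i|$, which determines whether the top eigenspace is one- or two-dimensional, and whether $\nu_i=0$, which determines the kernel.
\begin{itemize}
\item If $|\nu_1|<|\lambda_1|$, the top eigenvector of $C_1$ is $e_3$, and $C_2e_3=\tfrac{\lambda_2}{\sqrt2}\cdot(-1)f_3\cdot\ldots\neq0$. Indeed $\langle e_3,f_3\rangle\neq0$, so $e_3\notin\ker C_2$, a contradiction. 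The same holds symmetrically if $|\nu_2|<|\lambda_2|$.
\item Otherwise $|\nu_1|=|\lambda_1|$ and $|\nu_2|=|\lambda_2|$. Then $\ker C_1=[e_1]$ and $\ker C_2=[f_1]$. The top eigenspace of $C_1$ is $[e_2,e_3]$, and I need it to meet $[f_1]$. But $f_1\notin[e_2,e_3]$, because $\langle f_1,e_1\rangle\neq0$. This is again a contradiction.
\end{itemize}

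The main obstacle is this last case check. The subtle point is that the top eigenspace can be two-dimensional when $|\nu_i|=|\lambda_i|$, which permits signs $\nu_i=-\lambda_i$. I will verify carefully that the chosen $f$-frame avoids every such coincidence; if some case survives, I would perturb $f_1,f_3$ to a generic rotation in $[e_1,e_3]$.
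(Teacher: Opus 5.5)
Your proposal is correct and is essentially the paper's argument. You use the same rigid test operators (a simple isolated top eigenvalue $1$, a one-dimensional kernel, and eigenvalue $\tfrac12$ elsewhere) together with Proposition~\ref{prop2.1}. This forces $Ce_1=0$ and makes the kernel vector of $A$ a norming eigenvector of $C$, so you reach the same contradiction: an eigenvector of the self-adjoint $C$ for a nonzero eigenvalue must be orthogonal to $\ker C$.

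The differences are cosmetic. You build $B$ by rotating inside $[e_1,e_3]$ and reuse that single pair for the $\dim H=3$ case, whereas the paper uses generic orthonormal bases and splits on whether $\xi,\xi'$ vanish. Your three-case check there is already complete, so the hedge about perturbing $f_1,f_3$ is unnecessary. When $\nu_1=-\lambda_1$ the norming vectors of $C_1$ are not all of $[e_2,e_3]$, but they still lie in it, which is all you use.
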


\begin{proof}
 Assume that $\dim H\geq 3$. Take
$e_1,e_2,x_1,x_2\in {\rm sphere}\,H$ with  $\langle e_1,e_2\rangle =0$, $x_1\in[e_1,e_2]^\bot $ and  $x_2=\lambda e_1+\mu e_2$, $\lambda ,\mu \in\mathbb{C} \backslash \{0\}$.
Let $$A=e_1\otimes e_1 +\frac{1}{2}P_{[e_1,e_2]^\bot } {\rm \ and \ }B=x_1\otimes x_1+ \frac{1}{2}P_{[x_1,x_2]^\bot }.$$  From Proposition \ref{prop2.1}, it is easily seen  that $B\not\perp^s_w A$, which means $A\not\perp\!\!\!\!\perp^s_wB$.

  Suppose that $A\perp\!\!\!\!\perp^s_wC\perp\!\!\!\!\perp^s_wB$ for some $C\in{\rm sphere}\,(\mathcal{B} _s(H))$.  We claim
 $Ce_1=0$. If, on the contrary, $Ce_1\neq 0$, take $\{u_n\}\subseteq {\rm sphere}\,H$ such that $w(A)=\lim\limits_{n \to \infty}|\langle Au_n,u_n\rangle |$. Write $u_n=\alpha _ne_1+\beta _nz_n$ with $\alpha _n,\beta _n\in\mathbb{C} $ and $z_n\in{\rm sphere}\,[e_1]^\bot $. Without loss of generality, we may assume $\alpha _n,\beta _n\geq 0$; otherwise, consider $e^{-i\xi _n}u_n$ and $e^{i(\zeta _n-\xi _n)}z_n$ instead, where $\xi _n=\arg \alpha _n$ and $\zeta _n=\arg\beta _n$.  So  $$1=w(A)=\lim\limits_{n \to \infty}|\alpha _n^2+\beta _n^2\langle Az_n,z_n\rangle | \leq\lim _{n\rightarrow \infty}(\alpha _n^2+\frac{1}{2}\beta _n^2)\leq \lim _{n\rightarrow \infty}(\alpha _n^2+\beta _n^2) =1,$$
 which implies
 $\lim\limits _{n\rightarrow \infty}(\alpha _n^2+\frac{1}{2}\beta _n^2)= \lim\limits _{n\rightarrow \infty}(\alpha _n^2+\beta _n^2)$.
Hence $\lim\limits_{n \to \infty}\beta _n=0$, and thus $\lim\limits_{n \to \infty}\alpha  _n=1$.
A direct calculation gives  $\lim\limits_{n \to \infty}\|e_1- u_n\|=0 $, and so  $\lim\limits_{n \to \infty}\| Cu_n\|=\|Ce_1\| \neq 0$, a contradiction to $A\perp^s_wC$.

Similarly,  by  the relations $C\perp^s_wA$ and $C\perp\!\!\!\!\perp^s_wB$, one can show that  $Ce_2=\eta _1 e_2$, $Cx_1=0$ and $Cx_2=\eta _2x_2$ with $\eta _1,\eta _2\in\{-1,1\}$. It follows that
  $$\lambda =\langle x_2,e_1\rangle =\langle \eta _2Cx_2,e_1\rangle=\langle \eta _2x_2,Ce_1\rangle=0,$$
  a contradiction. Therefore, the above $C$ does not exist and the distance between $A$ and $B$ is greater than $2$.

Now, assume that $\dim H=3$. Let $[e_1,e_2,e_3]$ and $[x_1,x_2,x_3]$ be any two  orthonormal bases of $H$. Write  $x_1=\lambda e_1+\mu e_2+\gamma e_3$ and $x_2=\lambda' e_1+\mu' e_2+\gamma' e_3$ with $\lambda ,\lambda '\mu ,\mu ',\gamma ,\gamma '\in\mathbb{C} \backslash \{0\}$. Take
$$A=e_1\otimes e_1 +\frac{1}{2}e_3\otimes e_3{\rm \ and \ }B=x_1\otimes x_1 +\frac{1}{2}x_3\otimes x_3.$$

  {\bf Claim 1.} $A\not\perp\!\!\!\!\perp^s_wB$.

  If $A\perp^s_wB$, then   $Be_1=0$ by Proposition \ref{prop2.1},  contradicting to $\mu '\neq 0$.

  {\bf Claim 2.} There does not exist $C\in{\rm NI} (\mathcal{B} _s(H))\backslash \{0\}$ such that $A\perp\!\!\!\!\perp^s_wC\perp\!\!\!\!\perp^s_wB$.

  If there exists such $C$,  then  by $A\perp\!\!\!\!\perp^s_wC$,   one gets $Ce_1=0$ and $Ce_2=\eta e_2$ with $\eta \in\{-1,1\}$;  by $B\perp\!\!\!\!\perp^s_wC$, one has $Cx_1=0$ and $Cx_2=\eta 'x_2$ with $\eta '\in\{-1,1\}$. It follows that
  $$\lambda '=\langle x_2,e_1\rangle=\langle \eta 'Cx_2,e_1\rangle=\eta '\langle x_2,Ce_1\rangle =0,$$
  a contradiction.

  {\bf Claim 3.} There does not exist $C_1,C_2\in{\rm NI} (\mathcal{B} _s(H))\backslash \{0\}$ such that $A\perp\!\!\!\!\perp^s_wC_1\perp\!\!\!\!\perp^s_wC_2\perp\!\!\!\!\perp^s_wB$.

Otherwise, an argument similar to that in Claim 2 gives
  $$C_1e_1=C_2x_1=0, \ C_1e_2=\eta e_2 {\rm \ and \ } C_2x_2=\eta 'x_2 {\rm \ with \ }\eta ,\eta '\in\{-1,1\}.$$
So we can set $C_1=\eta e_2\otimes e_2+\xi e_3\otimes e_3 $ and $C_2=\eta' x_2\otimes x_2+\xi' x_3\otimes x_3 $ with $\xi ,\xi' \in\mathbb{C} $ and $|\xi |,|\xi' |\leq 1$.
If  $\xi'\neq 0$, we have  $x_1\in[e_2,e_3]$ by $C_1\perp^s_wC_2$, a contradiction to $\lambda \neq 0$; if $\xi\neq 0$,  we get $e_1\in[x_2,x_3]$ by $C_2\perp^s_wC_1$,  contradicting to $\langle x_1,e_1\rangle =\lambda \neq 0$; if $\xi= \xi'= 0 $, one has  $\langle e_2,x_2\rangle =0$ by $C_1\perp^s_wC_2$, a contradiction to $\mu ' \neq 0$.

Now, by Claims 1-3,   we see that the distance between $A$ and $B$ must be greater than $3$.
\end{proof}

 Applying  Proposition 5.1, we can draw out the diameter of $\varGamma ({\rm NI} (\mathcal{B} _s(H)), \perp\!\!\!\!\perp^s_w)$.

\begin{thm}\label{prop5.2}
If $\dim H\geq 4$, then ${\rm Diam}\,\varGamma ({\rm NI} (\mathcal{B} _s(H)), \perp\!\!\!\!\perp^s_w)=3$;  if $\dim H=3$, then  ${\rm Diam}\,\varGamma ({\rm NI} (\mathcal{B} _s(H)), \perp\!\!\!\!\perp^s_w)=4$.
\end{thm}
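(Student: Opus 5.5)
The plan is to obtain the lower bounds from Proposition~\ref{prop5.1} and to prove the matching upper bounds by building explicit short paths through simple self-adjoint operators, mostly rank-one projections. Proposition~\ref{prop5.1} already gives two vertices of $\varGamma({\rm NI}(\mathcal B_s(H)),\perp\!\!\!\!\perp^s_w)$ at distance $>2$ when $\dim H\ge 3$, and at distance $>3$ when $\dim H=3$. So it remains to show that the diameter is at most $3$ when $\dim H\ge 4$ and at most $4$ when $\dim H=3$.

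The basic tool is Proposition~\ref{prop2.1}. Every self-adjoint operator is normaloid and $C^*=C$, so for $A,C\in\mathcal B_s(H)$ with $A\ne0$ we have $A\perp^s_w C$ iff there are unit vectors $x_n$ with $|\langle Ax_n,x_n\rangle|\to w(A)$ and $Cx_n\to0$. For a unit vector $f$ this specializes as follows:
\begin{itemize}
\item $f\otimes f\perp^s_w Y$ iff $Yf=0$;
\item $Y\perp^s_w f\otimes f$ iff some maximizing sequence $y_n$ of $Y$ satisfies $\langle y_n,f\rangle\to0$.
\end{itemize}

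\textbf{Step 1 (neighbors of a given $A$).} Let $0\ne A\in{\rm NI}(\mathcal B_s(H))$. Then $0\in\sigma(A)$, and $\pm w(A)\in\sigma(A)$ for one choice of sign. When $\ker A\ne0$, pick a unit vector $f\in\ker A$; then $A\perp\!\!\!\!\perp^s_w f\otimes f$. Indeed, $f\otimes f\perp^s_w A$ because $Af=0$, and $A\perp^s_w f\otimes f$ because approximate eigenvectors of $A$ for $\pm w(A)$ are asymptotically orthogonal to $\ker A$ (distinct spectral values of a self-adjoint operator). When $0$ is only an approximate eigenvalue, I would replace $f\otimes f$ by $g(A)$, with $g(t)=\max\{0,1-2|t|/w(A)\}$. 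Approximate null vectors $x_n$ of $A$ satisfy $g(A)x_n-x_n\to0$, hence maximize $g(A)$ while $Ax_n\to0$. Approximate $\pm w(A)$-eigenvectors are annihilated asymptotically by $g(A)$. So again $A\perp\!\!\!\!\perp^s_w g(A)$, and $g(A)$ has $1$ as an eigenvalue precisely when the relevant spectral subspace is nontrivial. In the degenerate situations I would perturb $g$ to get a genuine vector $f$ with $g(A)f=f$ and $Af$ small, which is enough for the adjacency relations used below.

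\textbf{Step 2 ($\dim H=3$, bound $4$).} By Step 1, choose $f\in\ker A$ and $h\in\ker B$ (in finite dimensions kernels are nontrivial), so that $A\perp\!\!\!\!\perp^s_w f\otimes f$ and $B\perp\!\!\!\!\perp^s_w h\otimes h$. Since $\dim H\ge3$, pick a unit vector $u\perp f,h$. By the criterion above, $f\otimes f\perp\!\!\!\!\perp^s_w u\otimes u\perp\!\!\!\!\perp^s_w h\otimes h$. This gives a path of length $\le 4$, and with Proposition~\ref{prop5.1} the diameter is exactly $4$.

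\textbf{Step 3 ($\dim H\ge4$, bound $3$).} This is the main obstacle: I need a path $A\perp\!\!\!\!\perp^s_w X\perp\!\!\!\!\perp^s_w Y\perp\!\!\!\!\perp^s_w B$. I would keep $X$ and $Y$ of the form
\[
X=f\otimes f+\tfrac12\,w\otimes w,\qquad Y=k\otimes k+\tfrac12\,w'\otimes w',
\]
where $f$ is an (approximate) null vector of $A$ and $w\perp f$ is also killed by $A$, or $w=0$; similarly $k$ and $w'$ for $B$; and $a$, $b$ are (approximate) maximizing vectors of $A$ and $B$. The required conditions then become orthogonality relations:
\begin{itemize}
\item $X\perp^s_w A$ since $A$ kills $f$, and $A\perp^s_w X$ since $X$ kills $a$;
\item $X\perp^s_w Y$ requires $Yf=0$, i.e.\ $f\perp k,w'$;
\item $Y\perp^s_w X$ requires $Xk=0$;
\item the conditions for $Y$ and $B$ are the mirror images of those for $X$ and $A$.
\end{itemize}
The delicate case is when $\ker A$ and $\ker B$ are small (for instance one-dimensional) and not orthogonal. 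Then I would instead let one of the middle vertices take its maximum on a vector chosen inside $\ker A\cap\{a,b\}^\perp$ or a $2$-dimensional space spanned by a null vector and a maximizer. I would use the fourth dimension to find a unit vector simultaneously orthogonal to $a$, $b$, $f$ (or $k$), and make it the maximizing vector of the middle operators. I expect this case analysis, together with the approximate-eigenvector version needed in infinite dimensions, to be where most of the work lies.
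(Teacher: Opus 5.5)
You have the lower bounds right: they come from Proposition~\ref{prop5.1}. Your adjacency criteria drawn from Proposition~\ref{prop2.1} are correct, and so is the $\dim H=3$ walk $A,\ f\otimes f,\ u\otimes u,\ h\otimes h,\ B$, which is exactly the paper's Case~3.

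The gap is Step~3, which is the real content of the $\dim H\ge 4$ claim. You never produce a path of length $3$, and the ansatz you commit to cannot produce one in the generic case. For $X=f\otimes f+\frac12 w\otimes w$, the only maximizing vectors of $X$ are multiples of $f$. So by Proposition~\ref{prop2.1} the single vector $f$ must satisfy both $Af=0$ (for $X\perp^s_w A$) and $Yf=0$ (for $X\perp^s_w Y$). Symmetrically, $Bk=0$ and $Xk=0$, and $Xk=0$ together with $Xf=f$ forces $f\perp k$. More generally, in finite dimensions any path $A,X,Y,B$ in which $X$ and $Y$ each have a single maximizing direction needs orthogonal vectors in $\ker A$ and $\ker B$. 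Hence, when $\ker A$ and $\ker B$ are one-dimensional and not orthogonal (the typical situation), no path of your type exists. Your sketched remedy is not made precise; for instance, $\ker A\cap\{a,b\}^\perp$ may be $\{0\}$. So exactly the case you call delicate is left open.

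The missing idea is to give each middle vertex a two-dimensional top eigenspace. Then one maximizing vector serves the outer neighbour and another serves the other middle vertex. The paper's construction is as follows:
\begin{itemize}
\item take maximizing eigenvectors $e_1,x_1$ of $A,B$ and null vectors $Ae_2=0=Bx_2$;
\item take a unit $y\perp e_1,x_2$ and a unit $z\perp x_1,e_2,y$ (this is where $\dim H\ge 4$ is used);
\item set $C_1=P_{[e_2,y]}$ and $C_2=P_{[x_2,z]}$.
\end{itemize}
Then the six facts $C_1e_1=0$, $Ae_2=0$, $C_2y=0$, $C_1z=0$, $Bx_2=0$, $C_2x_1=0$ give $A\perp\!\!\!\!\perp^s_w C_1\perp\!\!\!\!\perp^s_w C_2\perp\!\!\!\!\perp^s_w B$.

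You also leave $\dim H=\infty$ open, and your Step~1 fallback is not valid as stated. Since $\perp^s_w$ is defined through limits, a single $f$ with $\|Af\|$ merely small does not give $X\perp^s_w A$. The top eigenspace of the middle vertex must contain a whole approximate null sequence of $A$. That is why your $g(A)$ is a genuine neighbour of $A$, but it does not by itself supply the two middle vertices.

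The paper's Case~1 replaces $e_1,e_2$ by spectral subspaces $H_1,H_2$ of $A$, and $x_1,x_2$ by $H_3,H_4$ of $B$. Be aware that the existence of a unit $y\perp H_1+H_4$ is only asserted there, and it can fail. Take $A=M_\phi$ on $L^2(\mathbb{R})$ with $\phi$ an increasing continuous bijection of $\mathbb{R}$ onto $(0,1)$, and let $B$ be its conjugate by the Fourier transform. A Paley--Wiener argument then gives $[H_1,H_4]^\perp=\{0\}$ for all choices of $\varepsilon_i$. So the infinite-dimensional case needs a more careful choice of middle projections. For example, when $0$ lies in both essential spectra, one can use closed spans of approximate null vectors of $A$ and of $B$, chosen inductively to be mutually orthogonal.
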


\begin{proof}
	
  Take arbitrary $A,B\in{\rm NI} (\mathcal{B} _s(H))$. Without loss of generality, assume that $w(A)=w(B)=1$, $1\in\sigma(A)$ and $1\in\sigma(B)$.
We will prove the theorem by considering three cases.

{\bf Case 1.} $\dim H=\infty$.

 Let $A=\int z{\rm d}E(z)$ and $B=\int z{\rm d}F(z)$ be the spectral decomposition of $A$ and $B$, respectively.  For any  $\varepsilon _i>0$, $i \in\{1,2,3,4\}$, let
$$H_1=\begin{cases}E([1-\varepsilon _1,1])\ & {\rm if}\ \ 1\notin\sigma _p(A),\\
  [e_1]\ &{\rm if}\ \ 1\in\sigma _p(A),
  \end{cases} \ \
  H_2=\begin{cases}E([-\varepsilon _2,\varepsilon _2])\ & {\rm if}\ \ 0\notin\sigma _p(A),\\
  [e_2]\ &{\rm if}\ \ 0\in\sigma _p(A),
  \end{cases}
  $$
$$H_3=\begin{cases}F([1-\varepsilon _3,1])\ & {\rm if}\ \ 1\notin\sigma _p(B),\\
  [e_3]\ &{\rm if}\ \ 1\in\sigma _p(B),
  \end{cases} \ \
  H_4=\begin{cases}F([-\varepsilon _4,\varepsilon _4])\ & {\rm if}\ \ 0\notin\sigma _p(B),\\
  [e_4]\ &{\rm if}\ \ 0\in\sigma _p(B),
  \end{cases}
  $$
  where  $e_1\in {\rm sphere}\,E(1)$, $e_2\in{\rm sphere}\, E(0)$, $e_3\in{\rm sphere}\, F(1)$, $e_4\in{\rm sphere}\, F(0)$, $\sigma _p(X)$ denotes the point spectrum of $X$. Let $\varepsilon _i$ be sufficiently small so that there exist unit vectors $y\in[H_1,H_4]^\perp $ and $z\in[H_3,H_2,y]^\perp $. Let $C_1=P_{[H_2,y]}$ and $C_2=P_{[H_4,z]}$. Then we have
  $$A\perp\!\!\!\!\perp^s_wC_1\perp\!\!\!\!\perp^s_wC_2\perp\!\!\!\!\perp^s_wB,$$
  which, together with  Proposition \ref{prop5.1}, implies   ${\rm Diam}\,\varGamma ({\rm NI} (\mathcal{B} _s(H)), \perp\!\!\!\!\perp^s_w)=3$.

  {\bf Case 2.}   $4\leq\dim H<\infty$.

In this case, the numerical range $W(X)$ is a closed convex set for any $X\in\mathcal{B} (H)$. Thus, there exist $e_1,e_2,x_1,x_2\in{\rm sphere}\,H$ such that
  $$|\langle Ae_1,e_1\rangle |=w(A), \ Ae_2=0, \ |\langle Bx_1,x_1\rangle| =w(B) \ {\rm and  } \ Bx_2=0. $$
  Take $y\in[e_1,x_2]^\bot $,  $z\in[x_1,e_2,y]^\bot $, and let $C_1=P_{[e_2,y]}$ and $C_2=P_{[x_2,z]}$. It follows that
  $A\perp\!\!\!\!\perp^s_wC_1\perp\!\!\!\!\perp^s_wC_2\perp\!\!\!\!\perp^s_wB,$
  which again implies  ${\rm Diam}\,\varGamma ({\rm NI} (\mathcal{B} _s(H)), \perp\!\!\!\!\perp^s_w)=3$ by Proposition \ref{prop5.1}.

{\bf Case 3.} $\dim H=3 $.

Take $x,y\in{\rm sphere}\,H$ such that $Ax=By=0$. Let $C_1=x\otimes x$, $C_2=y\otimes y$ and  $C_3=z\otimes z$ with $z\in{\rm sphere}\,[x,y]^\bot $. It is easily checked that
  $A\perp\!\!\!\!\perp^s_wC_1\perp\!\!\!\!\perp^s_wC_3\perp\!\!\!\!\perp^s_wC_2\perp\!\!\!\!\perp^s_wB,$
which entails that  ${\rm Diam}\,\varGamma ({\rm NI} (\mathcal{B} _s(H)), \perp\!\!\!\!\perp^s_w)=4$ by
Proposition \ref{prop5.1}.
 
 The proof is complete. \end{proof}

  For the connected components of   $\varGamma (\mathcal{B} _s(H), \perp\!\!\!\!\perp^s_w)$, we have the following result.

 \begin{thm}\label{prop5.5} For $\dim H\geq 3$, the connected components of   $\varGamma (\mathcal{B} _s(H), \perp\!\!\!\!\perp^s_w)$ are either isolated vertices or  $\varGamma ({\rm NI} (\mathcal{B} _s(H)), \perp\!\!\!\!\perp^s_w)$.
Moreover, $$\{A\in\mathcal{B} _s(H): A {\rm \ is \ an \ isolated \ vertex }\}=\mathcal {B} _s(H)\backslash {\rm NI} (\mathcal{B} _s(H)).$$


\end{thm}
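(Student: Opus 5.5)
We prove two things: every invertible self-adjoint operator is an isolated vertex, and all nonzero non-invertible self-adjoint operators lie in one connected component. Together these give both the description of the components and the identification of the isolated vertices.

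\emph{Invertible operators are isolated.} Let $A\in\mathcal{B}_s(H)$ be invertible, and suppose $A\perp\!\!\!\!\perp^s_w B$ for some $B\in\mathcal{B}_s(H)$. Then in particular $B\perp^s_w A$. By Proposition \ref{prop2.4}, applied with the invertible operator $A$ in the role of ``$B$'' there, this forces $B=0$. Since $0$ is not a vertex, $A$ has no neighbours and is an isolated vertex.

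\emph{Non-invertible operators form one component.} By Theorem \ref{prop5.2}, for $\dim H\geq 3$ the graph $\varGamma({\rm NI}(\mathcal{B}_s(H)),\perp\!\!\!\!\perp^s_w)$ has finite diameter (at most $4$). Hence any two nonzero non-invertible self-adjoint operators are joined by a path. The paths constructed in the proof of Theorem \ref{prop5.2} use only the projections $C_i$, which are themselves non-invertible, so each such path stays inside ${\rm NI}(\mathcal{B}_s(H))$. In particular no element of ${\rm NI}(\mathcal{B}_s(H))\setminus\{0\}$ is isolated, since $\dim H\geq 3$ guarantees another vertex to connect to.

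\emph{Maximality.} No vertex of ${\rm NI}(\mathcal{B}_s(H))$ is adjacent to an invertible operator, because invertible operators have no neighbours at all. So ${\rm NI}(\mathcal{B}_s(H))$ is a maximal connected subgraph, i.e.\ a connected component. Every other vertex is invertible and therefore forms its own singleton component. This gives both assertions of the theorem.

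The main point to check is that the connecting paths in Theorem \ref{prop5.2} genuinely stay among non-invertible vertices. This is clear from the explicit choice of the $C_i$ as proper projections (or rank-one operators $x\otimes x$), which are non-invertible because $\dim H\geq 3$.
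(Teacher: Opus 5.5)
Your proposal is correct and follows the paper's proof. Invertible vertices are shown to be isolated by applying Proposition \ref{prop2.4} to $B\perp^s_w A$, and connectivity of $\varGamma({\rm NI}(\mathcal{B}_s(H)),\perp\!\!\!\!\perp^s_w)$ is taken from (the proof of) Theorem \ref{prop5.2}. The differences are minor: the paper shows a singular $A$ is not isolated by exhibiting an explicit neighbour, the spectral projection of $A$ for $[-\varepsilon,\varepsilon]$, whereas you deduce this from connectivity of ${\rm NI}$ plus the existence of a second vertex there, and you also spell out the maximality step that the paper leaves implicit.
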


\begin{proof}
	Take any $A\in \mathcal{B} _s(H)$.

    If $A $ is invertible and $A\perp\!\!\!\!\perp^s_wB$ for some $B\in\mathcal{B} _s(H)$,  we have $B\perp^s_wA$, which and Proposition \ref{prop2.4} imply $B=0$. So  $A$ is an isolated vertex.

  If $A$ is not invertible, then  $0\in\sigma (A)$ and $0\in\overline{W(A)}=[\lambda _1,\lambda _2] $, where $\lambda_1=\min\{\lambda: \lambda\in\sigma(A)\}$ and $\lambda_2=\max\{\lambda: \lambda\in\sigma(A)\}$. Let $A=\int z{\rm d}E(z)$ be the spectral decomposition of $A$ . We may assume $|\lambda _2|\geq |\lambda _1|$; otherwise, consider $-A$ instead. Then there exist $\varepsilon,\varepsilon ' \geq 0$ such that the nontrivial subspaces $H_1=E([-\varepsilon ,\varepsilon ])$ and $H_2=E([\lambda _2-\varepsilon' ,\lambda _2])$ are orthogonal to each other. Take $B=P_{[H_1]}$. Then $A\perp\!\!\!\!\perp^s_wB$ and $B\neq 0$. This means that $A$ is not an isolated vertex.

Now, by the proof of Theorem \ref{prop5.2}, $\varGamma ({\rm NI} (\mathcal{B} _s(H)), \perp\!\!\!\!\perp^s_w)$ is connected.\end{proof}

 The case where the dimension of $H$ is 2 is different and needs to be discussed separately. In this case, $\mathcal B_s(H)=\mathsf{H} _2$.

 \begin{thm}\label{prop5.6} 
 Every   connected component  of   $\varGamma (\mathsf{H} _2, \perp\!\!\!\!\perp^s_w)$ is either an isolated vertice determined by an invertible matrix in $\mathsf{H} _2$,  or a set  of the form
$$S_x=\{u\otimes u:u\in[x]\cup [x]^\bot \}$$
for some $x\in {\rm sphere}\, \mathbb C^2$. Moreover, for each $x\in {\rm sphere}\, \mathbb C^2$, we have ${\rm Diam}\,\varGamma (S_x,\perp\!\!\!\!\perp^s_w)=1$.
\end{thm}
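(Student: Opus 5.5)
We first treat the invertible elements of $\mathsf{H}_2$. If $A\in\mathsf{H}_2$ is invertible and $A\perp\!\!\!\!\perp^s_w B$ for some $B\in\mathsf{H}_2$, then $B\perp^s_w A$. Proposition \ref{prop2.4} then forces $B=0$, so $A$ is an isolated vertex.

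Next we describe the non-invertible nonzero elements. Such an $A\in\mathsf{H}_2$ is singular and self-adjoint, hence of rank one. Up to a real scalar, which does not change the vertex, $A=u\otimes u$ with $u\in{\rm sphere}\,\mathbb C^2$. It then remains to determine exactly when $u\otimes u\perp\!\!\!\!\perp^s_w v\otimes v$ for unit vectors $u,v$.

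Since $u\otimes u$ is normal, hence normaloid, Proposition \ref{prop2.1} applies. It gives $u\otimes u\perp^s_w v\otimes v$ if and only if there is a sequence of unit vectors $x_n$ with $|\langle x_n,u\rangle|^2\to 1$ and $\|(v\otimes v)x_n\|=|\langle x_n,v\rangle|\to 0$. By compactness of the unit sphere of $\mathbb C^2$, this happens exactly when some unimodular multiple of $u$ satisfies $\langle u,v\rangle=0$, i.e. $v\perp u$. Indeed, the maximizing set $\{z:|\langle z,u\rangle|=1\}$ is just $\mathbb T u$. This condition is symmetric in $u$ and $v$, so
$$u\otimes u\perp\!\!\!\!\perp^s_w v\otimes v\iff \langle u,v\rangle=0 .$$

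From this characterization the structure follows. In $\mathbb C^2$, the orthogonal complement of $[u]$ is a single line $[u]^\perp$. Hence the vertex $u\otimes u$ is adjacent to exactly one vertex, namely $u^\perp\otimes u^\perp$, and that vertex in turn is adjacent only to $u\otimes u$. Therefore the component containing $x\otimes x$ is precisely $S_x=\{u\otimes u:u\in[x]\cup[x]^\perp\}$. As vertices, $S_x$ consists of two points joined by an edge, so ${\rm Diam}\,\varGamma(S_x,\perp\!\!\!\!\perp^s_w)=1$.

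Every vertex is either invertible or a rank-one element $u\otimes u$, so these components exhaust the graph. The only step needing some care is the use of Proposition \ref{prop2.1}: one should check that normaloidness holds and that the compactness argument upgrades the sequential condition to exact orthogonality. Both are routine in dimension $2$.
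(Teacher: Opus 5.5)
Your proof is correct and follows essentially the same route as the paper. Invertible elements are isolated (via Proposition \ref{prop2.4}, exactly as in Theorem \ref{prop5.5}). Singular nonzero elements are real multiples of rank-one projections $u\otimes u$, and $u\otimes u\perp\!\!\!\!\perp^s_w v\otimes v$ holds iff $\langle u,v\rangle=0$, which yields the two-vertex components $S_x$ of diameter $1$. Your use of Proposition \ref{prop2.1} together with compactness simply supplies the details of the step the paper dismisses as ``easy to verify.''
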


\begin{proof} Obviously, every invertible matrix in $\mathsf{H} _2$ is an isolated vertice of the orthgraph    $\varGamma (\mathsf{H} _2, $\\$\perp\!\!\!\!\perp^s_w)$. Assume $A,B\in \mathsf{H} _2$ are not invertible. Without loss of generality, we may assume that both $A$ and $B$ are positive. Then  $A=x\otimes x$ and $B=y\otimes y$ for some $x,y\in \mathbb C^2$. It is easy to verify that $A\perp\!\!\!\!\perp^s_w B\Leftrightarrow \langle x,y\rangle =0$. So $A$ is not an isolated vertice of $\varGamma (\mathsf{H} _2, \perp\!\!\!\!\perp^s_w)$, and  the connected components containing $A=x\otimes x$ has the form $S_x=\{u\otimes u: u\in[x]\cup [x]^\bot \} {\rm \ with \ }x\in H$.
Moreover,
for any $A,B\in S_x$, we have that either $A\perp\!\!\!\!\perp^s_w B$ or $A=\lambda B$ for some $\lambda \in\mathbb{R} $. Hence Diam\,$\varGamma (S_x,\perp\!\!\!\!\perp^s_w)=1$.

The proof is completed.
\end{proof}


\end{document}